\newtheorem{thm}{Theorem}
\newtheorem{cor}{Corollary}
\newtheorem{lem}{Lemma}
\newtheorem{pro}{Proposition}
\newtheorem{rem}{Remark}
\newtheorem{definition}{Definition}
\numberwithin{equation}{section} \numberwithin{lem}{section}
\numberwithin{thm}{section} \numberwithin{cor}{section}
\numberwithin{pro}{section} \numberwithin{rem}{section}
\def\ep{\varepsilon}
\def\eps{\varepsilon}
\def\R{\mathbb{R}}
\def\C{\mathbb{C}}
\begin{document}
\title{Interface layer of a two-component Bose-Einstein condensate}
\author{Amandine Aftalion}
\address{CNRS UMR 8100, Laboratoire de Math\'ematiques de Versailles, Universit\'{e} de Versailles Saint-Quentin, 45
avenue des Etats-Unis, 78035 Versailles Cedex, France.}
\email{amandine.aftalion@uvsq.fr}
\author{Christos Sourdis}
\address{Department of Mathematics,  University of
Turin,   Via Carlo Alberto 10,
20123, Turin, Italy.}
\email{christos.sourdis@unito.it}

\date{\today}
\begin{abstract}
This paper deals with the study of the behaviour of the wave functions of a two-component
 Bose-Einstein condensate
  near the interface, in the case of strong segregation. This
  yields a system of two coupled ODE's for which we want to have estimates
  on the asymptotic behaviour, as the strength of the coupling tends to infinity.
  %, of heteroclinic solutions (''domain walls").
  % that connect two equilibria.
   As in phase separation models, the leading order profile is a hyperbolic tangent. We construct an % inner and outer solution that we match at a point using the fact that  the Hamiltonian
 % of the system is constant. Then,
  approximate solution and use the properties of the associated linearized operator to perturb it into a genuine solution for which we have
  an asymptotic expansion. We prove that the constructed heteroclinic solutions are linearly nondegenerate, in the natural sense, and that there is
  a spectral gap,  independent of the large interaction parameter, between the zero eigenvalue (due to translations) at the bottom of the spectrum and the rest of the spectrum. Moreover, we prove a uniqueness result which implies that, in fact, the constructed heteroclinic is the unique minimizer (modulo translations) of the associated energy, for which we provide an expansion.
  %In particular,  this settles a conjecture raised recently by Alama, Bronsard, Contreras and Pelinovsky [Arch. Rational Mech. Anal. 215 (2015), 579--610]. Lastly, putting everything together, we are able to give an asymptotic expression for the minimal energy which is in agreement with that predicted in the physics literature.
  %Moreover, we establish a uniqueness result for such solutions.
\end{abstract}
 \maketitle

%\tableofcontents

\section{Introduction}\label{secIntro}%\input{introaacs.tex}
%%%%%%%%%%%%%%%%%%%%%%%%%%%%%%%%%%%%%%%%%%%%%%%%%%%%%%%%%%%%%%%%%%%%%%%%%%%%%%%%%%%%%%%%%%%%%%%%%%%
\subsection{The problem}
A two-component condensate is described by two complex valued wave
 functions  minimizing a Gross-Pitaevskii energy with a coupling term.
  According to the magnitude of the coupling parameter, the components can either
  coexist or segregate.

  The segregation behaviour in two-component condensates has been widely studied in the mathematics literature: regularity of the wave function \cite{ctv3,dancer2011,NoTaTeVe,sz2,sz,wang2015uniform,WeWe1}, regularity of the limiting interface \cite{CaffLin2,tavares2012regularity,zhang2015singularities}, asymptotic behaviour near the interface \cite{berestycki-wei2012,berestycki2}, $\Gamma$-convergence in the case of a trapped problem \cite{AL,goldman2015phase,GL}.

 This paper deals with the case of segregation, and more precisely, the study of the behaviour of the wave functions
  near the interface.  In the physics literature, there is a formal analysis of this small coexistence region which,  at leading order, is
   given by a hyperbolic tangent \cite{AO,barankov,vaninterface}. Here, we want to derive
  a rigorous asymptotic expansion of this transition layer, which will be useful in the analysis
  of more complex patterns.

The aim of this paper is therefore to study the positive solutions of the system \begin{equation}\label{eqEqGen}
	\left\{
	\begin{array}{c}
	-v_1''+v_1^3- v_1+\Lambda v_2^2 v_1=0, \\
	\\
	-v_2''+v_2^3-v_2+\Lambda v_1^2 v_2=0,
	\end{array}
	\right.
	\end{equation}
	\begin{equation}\label{eqBdryGen}
	(v_1,v_2)\to \left(0,1\right)\ \textrm{as}\ z\to -\infty,\ \ (v_1,v_2)\to
	\left(1,0\right)\ \textrm{as}\ z\to  +\infty.
	\end{equation} The segregation case corresponds to
	  \begin{equation}\label{eqHessian}\Lambda>1\end{equation} and the  limit
 $\Lambda \to \infty$.

The Hamiltonian
	\begin{equation}\label{eqGenHam}
	H=\sum_{i=1}^{2}\left[\frac{1}{2}(v_i')^2
	-\frac{\left(1-v_i^2\right)^2}{4}\right]-\frac{\Lambda}{2}v_1^2v_2^2,
	\end{equation}
	is constant along solutions of (\ref{eqEqGen})-(\ref{eqBdryGen}) and is equal to \begin{equation}\label{eqHamilton}
H=-\frac{1}{4}.
\end{equation}

It is known that  solutions of (\ref{eqEqGen})-(\ref{eqBdryGen}) are uniformly bounded independently of $\Lambda>1$ (see (\ref{eqAlama}) below).
Hence, by the general theory developed in  \cite{sz} and the references therein, they are uniformly Lipschitz continuous and converge, uniformly  as $\Lambda\to \infty$, to the merely Lipschitz continuous pair $\left(\chi_{(0,\infty)}U_1,\chi_{(-\infty,0)}U_2 \right)$, where
$U_1$ and $U_2$ denote
the unique solutions  respectively
of the following problems:
\begin{equation}\label{eqW1Gen}
	u''+ u-u^3=0,\ z>0;\ u(0)=0,\ u(z)\to 1  \ \textrm{as}\ z\to  +\infty,
\end{equation}
\begin{equation}\label{eqW2Gen}
	u''+ u-u^3=0,\ z<0;\ u(z)\to 1 \ \textrm{as}\ z\to  -\infty,\ u(0)=0,
\end{equation}
($\chi_I$ stands for the characteristic function of a set $I$).
In fact, we have the explicit formulas:
\[
U_i(z)=\tanh\left(\frac{z}{\sqrt{2}}\right),\ \ (-1)^iz\leq 0,\ \ i=1,2.
\]
A crucial observation   is that  the Hamiltonian structure of (\ref{eqW1Gen}), (\ref{eqW2Gen}) implies  the reflection property
\begin{equation}\label{eqGenPsi0}
	U_1'(0)+U_2'(0)=0, \ \
	\textrm{with}\ \
	\psi_0=U_1'(0)=\frac{1}{\sqrt{2}}.
\end{equation}
Of course, this follows at once from the explicit representations of $U_1$, $U_2$ but we would like to start convincing the reader that the specific form of the nonlinearity in (\ref{eqW1Gen}), (\ref{eqW2Gen}) is not of essential importance in the proofs.
On the one hand, the functions $\chi_{(0,\infty)}U_1,\chi_{(-\infty,0)}U_2$ do satisfy (\ref{eqEqGen})-(\ref{eqBdryGen}) for $z\neq 0$. On the other hand, their second derivatives blow-up at the origin as  delta masses.
To remedy this, guided by formal matched asymptotics and (\ref{eqGenPsi0}) (see also \cite{vaninterface}), we will instead use near the origin an approximate solution with  leading term
\begin{equation}\label{eqInprelimgh}
\left(\Lambda^{-\frac{1}{4}}V_1\left(\Lambda^{\frac{1}{4}}z
\right), \ \Lambda^{-\frac{1}{4}}V_2\left(\Lambda^{\frac{1}{4}}z
\right)\right),
%\ \ \textrm{for}\ \ |z|\leq (\ln \Lambda)\Lambda^{-\frac{1}{4}},
\end{equation}
where the pair $(V_1,V_2)$ is provided by the following proposition.
 \begin{pro}\label{proBerestycki}\cite{berestycki-wei2012,berestycki2}
There exists a unique solution $(V_1,V_2)$ with positive
components to the system
\begin{equation}\label{eqBUsystem}
\left\{\begin{array}{c}
  V''_1 =V_2^2 V_1, \\
    \\
  V''_2 =V_1^2 V_2, \\
\end{array} \right.
\end{equation}
 such that
\begin{equation}\label{eqBUasymp}
\frac{V_1}{x}\to \psi_0 \ \ \textrm{and}\ \ V_2\to 0\
\textrm{as}\ x\to +\infty,\end{equation} where $\psi_0>0$ is as in
(\ref{eqGenPsi0}), and
\begin{equation}\label{eqV1V2sym}
V_1(-x)=V_2(x), \ x\in \mathbb{R}.\end{equation}
Moreover,
\begin{equation}\label{eqV1V2asympt}
V_1(x)=\psi_0x+\kappa+\mathcal{O}\left(e^{-cx^2} \right)\
\textrm{and}\ V_2(x)=\mathcal{O}\left(e^{-cx^2} \right)\
\textrm{as}\ x \to +\infty,
\end{equation}
for some
$
\kappa\geq 0,
$
and these relations can be differentiated arbitrarily many times.
Every other entire solution of (\ref{eqBUsystem})
with positive components is given by
\begin{equation}\label{eqsymBU}
\left(\mu V_1\left(\mu (x-h) \right),\mu V_2\left(\mu (x-h)
\right) \right)
\end{equation}
for some $\mu>0$ and $h\in \mathbb{R}$.
\end{pro}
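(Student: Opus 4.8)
The statement bundles four facts about the autonomous system (\ref{eqBUsystem}): existence of a symmetric heteroclinic, the asymptotic expansions (\ref{eqBUasymp})--(\ref{eqV1V2asympt}), the reflection symmetry (\ref{eqV1V2sym}), and the complete classification (\ref{eqsymBU}) of entire positive solutions. I would prove them in that order, the classification being by far the hardest. Two structural observations drive everything. First, (\ref{eqBUsystem}) has the conserved Hamiltonian $\mathcal{H}=\tfrac12(V_1')^2+\tfrac12(V_2')^2-\tfrac12V_1^2V_2^2$, which rescales like $\mu^4$ under $(V_1,V_2)\mapsto(\mu V_1(\mu\,\cdot),\mu V_2(\mu\,\cdot))$; second, each component is convex, since $V_i''=(\text{nonnegative})\cdot V_i\geq 0$, and a positive convex function on $\mathbb{R}$ is constant if it is bounded. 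A third useful inequality: $P:=V_1V_2$ satisfies $P''=V_1V_2(V_1^2+V_2^2)+2V_1'V_2'\geq 2P^2$ wherever $V_1'V_2'\geq 0$, which forces finite-time blow-up as soon as both components become large with nonnegative derivatives.

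For \emph{existence} I would shoot along the symmetric ansatz. A solution with $V_1(-x)=V_2(x)$ is exactly one with Cauchy data $V_1(0)=V_2(0)=a$, $V_1'(0)=\beta=-V_2'(0)$; rescaling to $a=1$, the symmetric solutions modulo scaling form the one-parameter family $(V_1^\beta,V_2^\beta)$, with $\mathcal{H}=\beta^2-\tfrac12$. One then follows $(V_1^\beta,V_2^\beta)$: for $\beta$ very negative $V_1^\beta$ reaches $0$ at a finite point, while for $\beta$ large $V_2^\beta$ reaches $0$ at a finite point or the solution blows up (by the $P''\geq 2P^2$ estimate); both behaviours are open in $\beta$. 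A threshold value $\beta_\ast$ then yields a globally defined solution with positive components, along which $V_1$ increases and $V_2$ decreases, and a final rescaling normalises its asymptotic slope to be exactly $\psi_0$ of (\ref{eqGenPsi0}). (A variational construction---minimising $\int\bigl(\tfrac12|V'|^2+\tfrac12V_1^2V_2^2\bigr)$ over exhausting intervals with boundary data adapted to the expected profile, then passing to the limit---is also available.)

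For \emph{monotonicity, asymptotics and symmetry}, start from any entire solution with positive components. Neither component is bounded (convexity), and the $P''\geq 2P^2$ estimate shows they cannot both be unbounded at the same end; so, up to $x\mapsto -x$, $V_1\to+\infty$ and $V_2\to 0$ as $x\to+\infty$, and symmetrically at $-\infty$. Convexity then forces $V_1'\geq 0$, $V_2'\leq 0$ on all of $\mathbb{R}$, so $V_1$ is increasing, $V_2$ decreasing, and $V_1-V_2$ is strictly increasing with a single zero. Since $V_1$ is bounded below away from $0$ on a half-line, comparison in $V_2''=V_1^2V_2$ gives exponential decay of $V_2$, whence (by a Grönwall/comparison argument using that $V_1''$ now has an $L^1$ coefficient) $V_1$ is asymptotically linear; evaluating $\mathcal{H}$ at $\pm\infty$ then gives $V_1'\to\psi_0:=\sqrt{2\mathcal{H}}>0$. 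Now $V_1(x)/x\to\psi_0$, so for large $x$ the estimate $V_2''\geq\tfrac12\psi_0^2x^2V_2$ and comparison with $e^{-cx^2}$ yield $V_2=\mathcal{O}(e^{-cx^2})$; feeding this back, $\psi_0-V_1'(x)=\int_x^\infty V_2^2V_1\,dt=\mathcal{O}(e^{-cx^2})$, hence $V_1(x)=\psi_0x+\kappa+\mathcal{O}(e^{-cx^2})$ with $\kappa=\lim_{x\to+\infty}(V_1-\psi_0x)$ finite, and $\kappa\geq 0$ by comparing the convex graph of $V_1$ with its limiting tangent line $y=\psi_0x+\kappa$. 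Differentiating (\ref{eqBUsystem}) repeatedly and bootstrapping upgrades all of this to the derivatives, giving (\ref{eqBUasymp})--(\ref{eqV1V2asympt}). Finally, placing the zero of $V_1-V_2$ at the origin, the reflected pair $(V_2(-x),V_1(-x))$ is again an entire positive solution agreeing with $(V_1,V_2)$ at $0$; a Wronskian comparison for the odd part $D=V_1-V_2$ and even part $S=V_1+V_2$, which solve $D''=-(V_1V_2)D$ and $S''=(V_1V_2)S$ (equivalently, the maximum principle for the cooperative system satisfied by $(V_1,-V_2)$), forces the two to coincide, which gives (\ref{eqV1V2sym}).

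The \emph{classification} (\ref{eqsymBU}), and with it uniqueness of the normalised solution, is the real obstacle. By the previous paragraph every entire positive solution is, after translation, symmetric, hence equals some $(V_1^\beta,V_2^\beta)$ up to scaling; so it suffices to show $\beta_\ast$ is the \emph{only} admissible shooting parameter, i.e.\ that the two exit sets are exactly $(-\infty,\beta_\ast)$ and $(\beta_\ast,+\infty)$. This reduces to a strict monotonicity in $\beta$---the first vanishing point of $V_1^\beta$ (resp.\ $V_2^\beta$) moving monotonically with $\beta$---which one attacks by differentiating the flow in $\beta$ and controlling the sign of $\partial_\beta(V_1^\beta,V_2^\beta)$ through the non-autonomous linearised system, together with a uniform exclusion of finite-time blow-up on the relevant $\beta$-range. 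I expect this rigidity step to be the delicate point precisely because the coupling in (\ref{eqBUsystem}) is competitive rather than cooperative, so that naive comparison for the linearised problem fails and the more careful sliding/moving-plane analysis of \cite{berestycki-wei2012,berestycki2} is required; once it is in place, every entire positive solution is $(\mu V_1(\mu(x-h)),\mu V_2(\mu(x-h)))$, which is (\ref{eqsymBU}), and uniqueness of $(V_1,V_2)$ under the normalisation (\ref{eqBUasymp})--(\ref{eqV1V2sym}) follows at once.
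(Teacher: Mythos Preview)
The paper does not prove Proposition~\ref{proBerestycki}; it is quoted from \cite{berestycki-wei2012,berestycki2}. The only thing the paper adds is the short argument, right after (\ref{eqV1V2monot}), that in fact $\kappa>0$, via the auxiliary function $R(x)=V_1(x)-\psi_0x-V_2(x)$, which is concave with $R(0)=0$ and $R(+\infty)=\kappa$. So there is no ``paper's own proof'' to compare against beyond that. With that caveat, your sketch of existence and of the asymptotics (\ref{eqV1V2asympt}) is in the right spirit and close to what is done in the cited references: shooting within the symmetric class, convexity to get monotonicity, Hamiltonian to identify the slope, and comparison with $e^{-cx^2}$ once $V_1$ is known to be linear.

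There is, however, a genuine gap in your treatment of the \emph{symmetry} (\ref{eqV1V2sym}). You place the unique zero of $V_1-V_2$ at the origin and then claim that a ``Wronskian comparison for $D=V_1-V_2$ and $S=V_1+V_2$'' forces $(V_1,V_2)$ to coincide with its reflection $(V_2(-\cdot),V_1(-\cdot))$. But these two solutions share only their \emph{values} at $0$, not their derivatives: one would need $V_1'(0)=-V_2'(0)$, and neither the Hamiltonian identity $V_1'(0)^2+V_2'(0)^2=\psi_0^2+V_1(0)^4$ nor the equations $D''=-(V_1V_2)D$, $S''=(V_1V_2)S$ (whose coefficients depend on the unknown itself) deliver this. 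The change of unknowns to the cooperative pair $(V_1,-V_2)$ that you allude to is exactly the starting point of the sliding argument in \cite{berestycki-wei2012}; the paper is explicit that \emph{both} the symmetry and the uniqueness ``were shown in \cite{berestycki-wei2012} and \cite{berestycki2} respectively by a nontrivial sliding method'', and emphasizes (after Proposition~\ref{proBerestNondegen}) that these are ``considerably harder to establish'' than monotonicity and nondegeneracy. So the sliding machinery that you defer to only for the classification step is in fact already needed for (\ref{eqV1V2sym}).

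A smaller issue: your justification of $\kappa\ge 0$ ``by comparing the convex graph of $V_1$ with its limiting tangent line'' goes the wrong way. Since $V_1'<\psi_0$, the function $V_1(x)-\psi_0x$ is strictly decreasing to $\kappa$, so $V_1(x)>\psi_0x+\kappa$ for all $x$; this places the graph \emph{above} the asymptote and gives no lower bound on $\kappa$. The paper's argument with $R$ (which uses (\ref{eqV1V2sym}) through $R(0)=0$) is what actually gives $\kappa>0$.
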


We emphasize that we will not need  the  above symmetry and uniqueness properties of the \emph{blow-up} profiles,
which were shown in
\cite{berestycki-wei2012} and \cite{berestycki2}  respectively by a nontrivial sliding method.
%Actually, we will end up with new (but indirect) proofs of them  (see Remark \ref{RemUnix}).

Our goal is to refine the  outer and inner approximate solutions in (\ref{eqW1Gen})-(\ref{eqW2Gen}) and (\ref{eqInprelimgh}) respectively, carefully glue them together and show that the resulting global approximate solution can be perturbed to a genuine one.

\subsection{Main results}
The main result of the paper is the following:
\begin{thm}\label{thmMain}
If $\Lambda>0$ is sufficiently large, problem
(\ref{eqEqGen})-(\ref{eqBdryGen}) has a solution
$(v_{1,\Lambda},v_{2,\Lambda})$ such
that
\begin{eqnarray}
\label{eqmonotTHM} v_{1,\Lambda}'(z)>0, \ \ v_{2,\Lambda}'(z)<0\ \ \textrm{for}\ \   z\in \mathbb{R},
\\ \label{v1U}
v_{i,\Lambda}(z)=U_i\left(z-(-1)^i\psi_0^{-1}\kappa
 \Lambda^{-\frac{1}{4}}\right)+\mathcal{O}\left((\ln \Lambda)\Lambda^{-\frac{3}{4}} \right)e^{-c|z|},\\  \textrm{uniformly for}\ \ (-1)^{i+1}z\geq (\ln \Lambda)\Lambda^{-\frac{1}{4}},\ \ \textrm{as}\ \ \Lambda\to \infty,  \nonumber  \\ \label{v1out}
v_{i,\Lambda}(z)=\Lambda^{-\frac{1}{4}}V_i\left(\Lambda^{\frac{1}{4}}z
\right)+\mathcal{O}\left(\Lambda^{-\frac{3}{4}}+|z|^3\right),
\end{eqnarray}
uniformly on $\left[-(\ln \Lambda)\Lambda^{-\frac{1}{4}}, (\ln
\Lambda)\Lambda^{-\frac{1}{4}} \right]$, as $\Lambda \to \infty$, $i=1,2$, where $U_1$, $U_2$ are the unique solutions of (\ref{eqW1Gen}), (\ref{eqW2Gen}) respectively,   $(V_1,V_2)$ is the solution
 of (\ref{eqBUsystem})-(\ref{eqBUasymp})-(\ref{eqV1V2sym}), and $\kappa>0$ is as in (\ref{eqV1V2asympt}).
Furthermore, for any $m>0$, we have
\begin{equation}\label{eqEmvoRRR}
v_{i,\Lambda}(z)\leq C \Lambda^{-\frac{1}{4}} e^{-c\Lambda^{\frac{1}{2}}z^2}+\mathcal{O}(\Lambda^{-m}),
\ \ (-1)^i z\in \left[\Lambda^{-\frac{1}{4}}, (\ln \Lambda)\Lambda^{-\frac{1}{4}}\right],\end{equation}
and
\begin{equation}\label{v1expdecay}
  v_{i,\Lambda}(z) \leq \Lambda^{-m}e^{-c(\ln \Lambda)^\frac{1}{2}\Lambda^\frac{1}{4}|z| },  \ \ (-1)^iz\geq (\ln \Lambda)\Lambda^{-\frac{1}{4}}, \ \ i=1,2,
\end{equation}
as $\Lambda \to \infty$.
Moreover,
\begin{eqnarray}\label{v1UGrad}
v_{i,\Lambda}'(z)=U_i'\left(z-(-1)^i\psi_0^{-1}\kappa
 \Lambda^{-\frac{1}{4}}\right)+\mathcal{O}\left((\ln \Lambda)\Lambda^{-\frac{3}{4}} \right)\left(|z|+\Lambda^{-\frac{1}{4}} \right)e^{-c|z|},\\  \textrm{uniformly for}\ \ (-1)^{i+1}z\geq (\ln \Lambda)\Lambda^{-\frac{1}{4}},\ \ \textrm{as}\ \ \Lambda\to \infty,\nonumber \\ \label{v1outt}
v'_{i,\Lambda}(z)={V_i'}\left(\Lambda^{\frac{1}{4}}z
\right)+\mathcal{O}\left(\Lambda^{-\frac{1}{2}}+|z|^2\right),
\end{eqnarray}
uniformly on $\left[-(\ln \Lambda)\Lambda^{-\frac{1}{4}}, (\ln
\Lambda)\Lambda^{-\frac{1}{4}} \right]$, as $\Lambda \to \infty$, $i=1,2$.
\end{thm}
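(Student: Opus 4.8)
The plan is to carry out a Lyapunov–Schmidt reduction around a carefully constructed global approximate solution, exploiting the non-degeneracy and spectral gap of the associated linearized operator.

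First I would build the approximate solution by matched asymptotics on three regions. In the inner region $|z|\leq (\ln\Lambda)\Lambda^{-1/4}$, use the scaled Berestycki–Wei profile $(\Lambda^{-1/4}V_i(\Lambda^{1/4}z))$ from Proposition \ref{proBerestycki}, possibly corrected by lower-order terms obtained by solving the linearized blow-up system against the error; the algebraic growth $V_1(x)=\psi_0 x+\kappa+\mathcal{O}(e^{-cx^2})$ dictates that the outer profile must be centered at the shifted point $(-1)^i\psi_0^{-1}\kappa\Lambda^{-1/4}$, which is the origin of the shift appearing in \eqref{v1U}. In the outer regions $(-1)^{i+1}z\geq (\ln\Lambda)\Lambda^{-1/4}$, use $U_i(z-(-1)^i\psi_0^{-1}\kappa\Lambda^{-1/4})$ for the dominant component and an exponentially small (in fact $\mathcal{O}(\Lambda^{-m})$, reflecting the Gaussian smallness $V_2=\mathcal{O}(e^{-cx^2})$ transported through the matching zone) ansatz for the vanishing component, since the coupling term $\Lambda v_1^2 v_2$ forces super-polynomial decay of the subdominant component once outside the layer — this is the source of \eqref{eqEmvoRRR} and \eqref{v1expdecay}. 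Glue the inner and outer pieces with cutoff functions supported in the overlap annulus $|z|\sim(\ln\Lambda)\Lambda^{-1/4}$; the logarithmic factor is chosen precisely so that the cross terms created by differentiating the cutoffs are $\mathcal{O}((\ln\Lambda)\Lambda^{-3/4})$, matching the error in \eqref{v1U}. A direct computation then bounds the residual $\mathcal{E}_\Lambda$ of the glued ansatz in a suitable weighted norm (exponential weight $e^{c|z|}$ in the outer zone, polynomial weight in the inner zone) by $\mathcal{O}((\ln\Lambda)\Lambda^{-3/4})$.

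Next I would set up the linearized operator $\mathcal{L}_\Lambda$ at the approximate solution and establish its invertibility modulo the kernel. The kernel is one-dimensional, spanned by the derivative $(\partial_z v_{1,\Lambda},\partial_z v_{2,\Lambda})$ coming from translation invariance; the key analytic input — which the abstract announced results (linear non-degeneracy and the $\Lambda$-independent spectral gap) provide — is a coercivity estimate $\langle \mathcal{L}_\Lambda w,w\rangle\geq \lambda_0\|w\|^2$ on the $L^2$-orthogonal complement of the approximate kernel, with $\lambda_0>0$ independent of $\Lambda$. This gives an a priori bound $\|w\|\leq C\|\mathcal{L}_\Lambda w\|$ on that complement, uniformly in $\Lambda$, and hence solvability of $\mathcal{L}_\Lambda w=g$ with a Lagrange-multiplier term $c\,(\partial_z v_{1,\Lambda},\partial_z v_{2,\Lambda})$. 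I would prove this coercivity by comparison with the decoupled limiting operators $-\partial_{zz}+3U_i^2-1$ on the two half-lines (whose only non-positive eigenvalue is a simple zero, with the rest of the spectrum bounded below by a gap) in the outer region, and with the linearization of the Berestycki–Wei system in the inner region, where the relevant second-order ODE system's non-degeneracy (up to the translation/scaling modes, only translation being admissible after fixing the normalization) is the crucial fact; a localization/partition-of-unity argument in $z$ patches the two estimates together, the overlap errors being absorbed because $\lambda_0$ is a fixed positive number while those errors vanish as $\Lambda\to\infty$.

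Then I would close the argument by a fixed-point/contraction mapping scheme: writing the sought solution as $(v_{1,\Lambda},v_{2,\Lambda}) = (\text{approx}) + w$, the equation becomes $\mathcal{L}_\Lambda w + Q_\Lambda(w) + \mathcal{E}_\Lambda = c\,(\partial_z v_{1,\Lambda},\partial_z v_{2,\Lambda})$, where $Q_\Lambda$ collects the quadratic-and-higher terms and is a contraction on a small ball in the weighted space. Since $\|\mathcal{E}_\Lambda\|=\mathcal{O}((\ln\Lambda)\Lambda^{-3/4})$ and $\mathcal{L}_\Lambda$ is uniformly invertible modulo the kernel, the Banach fixed point theorem yields a unique small $w$ with $\|w\|=\mathcal{O}((\ln\Lambda)\Lambda^{-3/4})$ and a corresponding multiplier $c$; one then shows $c=0$ because the full nonlinear operator is translation-equivariant and the orthogonality condition can be realized by an appropriate translation, or alternatively by a parity/symmetry argument using \eqref{eqV1V2sym}. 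The estimates \eqref{v1U}–\eqref{v1outt} follow by unwinding the definition of the approximate solution together with the weighted bound on $w$ and elliptic (here, ODE) regularity to pass from the $C^0$-type weighted norm to the gradient estimates; the Gaussian and $\mathcal{O}(\Lambda^{-m})$ decay of the subdominant components, \eqref{eqEmvoRRR}–\eqref{v1expdecay}, come from a separate barrier/maximum-principle argument applied to each component equation using the positive coupling potential $\Lambda v_{j,\Lambda}^2$ as an effective confining potential of size $\sim\Lambda^{1/2}$ on the relevant $z$-interval, and positivity plus monotonicity \eqref{eqmonotTHM} follow from the maximum principle and a sliding/moving-plane argument for the perturbed solution (or from monotonicity of the approximate solution plus smallness of $w$ together with the fact that a solution with a critical point would violate the Hamiltonian identity \eqref{eqHamilton}).

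The main obstacle I expect is establishing the $\Lambda$-independent coercivity/spectral-gap estimate for $\mathcal{L}_\Lambda$ in the inner region: the linearized Berestycki–Wei operator has a non-compact continuous spectrum issue because $V_1$ grows linearly, so one must work in the correctly weighted space and carefully rule out that the large coupling creates small eigenvalues other than the translational zero mode; interfacing this with the exponentially-weighted outer estimate across the $(\ln\Lambda)\Lambda^{-1/4}$ transition zone — where neither model is exactly valid — without losing the uniform gap is the delicate technical heart of the proof.
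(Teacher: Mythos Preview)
Your overall strategy (build an inner/outer approximate solution, invert the linearized operator modulo the translational mode, close by a fixed-point argument) is the same as the paper's, and your identification of the main difficulty---the linear estimate in the inner region where the potentials grow---is accurate. However, there is a genuine gap in the execution, and it is exactly the point the paper handles by an unconventional trick.

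You propose to glue the inner and outer ansatz with cutoffs and obtain a global residual $\mathcal{E}_\Lambda=\mathcal{O}\big((\ln\Lambda)\Lambda^{-3/4}\big)$, then run a single contraction. This does not close: the quadratic part of the nonlinearity contains the terms $\Lambda\, \textbf{w}_{1,ap}\varphi_2^2$ and $2\Lambda\,\textbf{w}_{2,ap}\varphi_1\varphi_2$ (see the expression for $N(\varphi_1,\varphi_2)$ in Section~\ref{subsecPertIn}), so a correction of size $(\ln\Lambda)\Lambda^{-3/4}$ is fed back as $\Lambda\cdot\big((\ln\Lambda)\Lambda^{-3/4}\big)^2=(\ln\Lambda)^2\Lambda^{-1/2}$ in the outer zone, which is \emph{larger} than the residual you started from. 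No reasonable choice of weights fixes this without extra structure. The paper avoids the problem entirely by \emph{not} using cutoffs: it first solves the inner problem \emph{exactly} on $I_\Lambda$ (Proposition~\ref{proExistInn}), then the outer problem \emph{exactly} on $\mathbb{R}\setminus I_\Lambda$ with matching $C^0$-data (Proposition~\ref{proExistOut}), and---this is the key idea you are missing---uses the conservation of the Hamiltonian (\ref{eqGenHam}), after adjusting the free scaling parameter $\tilde{B}$ so the inner solution carries the correct Hamiltonian constant, to conclude that the derivative jump at $\pm(\ln\Lambda)\Lambda^{-1/4}$ is $\mathcal{O}(\Lambda^{-\infty})$ (Subsections~\ref{subsecAdjHamSolInn} and \ref{subsecGluing}). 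Only then is the global Lyapunov--Schmidt step carried out, and now the residual is transcendentally small so the $\Lambda$ in the coupling causes no difficulty.

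A secondary point: you invoke the spectral gap of Theorem~\ref{thmNonDeg} as input, but in the paper that theorem is proved \emph{after} Theorem~\ref{thmMain}, using the already-constructed solution; the estimate actually used in the existence proof is the weighted a~priori bound of Proposition~\ref{proGenLinearQ}, which is established directly (by the same blow-up/compactness scheme as Proposition~\ref{proNonSymPro}) and does \emph{not} give a uniform $L^2$-coercivity constant---indeed its bound carries an explicit factor $\Lambda^{\beta/2}$. Your argument for $c_\Lambda=0$ should also be sharpened: the paper obtains it simply by testing the projected equation against $(v_1',v_2')$ and using the gradient structure of the left-hand side.
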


We can also show that $(v_{1,\Lambda},v_{2,\Lambda})$  is nondegenerate, in the natural sense,  and that there is a spectral gap.

\begin{thm}\label{thmNonDeg}
	Let $(v_1,v_2)$ be the heteroclinic solution to (\ref{eqEqGen})-(\ref{eqBdryGen}) which is constructed in Theorem \ref{thmMain}.
	Then, if $\Lambda>0$ is sufficiently large, the spectrum of the linearized operator
	\begin{equation}\label{eqM}
	\textbf{M} \left(\begin{array}{c}
	\varphi_1 \\
	\\
	\varphi_2 \\
	\end{array} \right)=\left(\begin{array}{c}
	-\varphi_1''+(3v_1^2-1)\varphi_1+\Lambda v_2^2 \varphi_1+2\Lambda v_1v_2\varphi_2 \\
	\\
	-\varphi_2''+(3v_2^2-1)\varphi_2+\Lambda v_1^2 \varphi_2+2\Lambda v_1v_2\varphi_1 \\
	\end{array} \right),
	\end{equation}
	in $L^2(\mathbb{R})\times L^2(\mathbb{R})$ with domain $H^2(\mathbb{R})\times H^2(\mathbb{R})$ is structured as follows:
	\begin{itemize}
		\item $0$ is the first eigenvalue and has $(v_1',v_2')$ as the associated eigenfunction,
		\item the rest of the spectrum is  contained in $(c,\infty)$ for some $c>0$.
	\end{itemize}
\end{thm}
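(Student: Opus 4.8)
The plan is to exploit the fact that, by Theorem \ref{thmMain}, the solution $(v_1,v_2)$ is close, in a suitable weighted sense, to the explicit limiting profile $(U_1,U_2)$ away from the interface and to the rescaled Berestycki-Wei profile $(V_1,V_2)$ near the interface, and to transfer the known spectral properties of these limiting objects to the operator $\mathbf M$. The first step is to observe that $(v_1',v_2')$ is indeed in the kernel: differentiating (\ref{eqEqGen}) shows $\mathbf M(v_1',v_2')=0$, and from (\ref{v1UGrad})--(\ref{v1outt}) together with (\ref{v1expdecay}) this pair decays exponentially, so it lies in $H^2(\mathbb R)\times H^2(\mathbb R)$; moreover by (\ref{eqmonotTHM}) its first component $v_1'$ is strictly positive and its second component $v_2'$ is strictly negative. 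The sign structure here is the crucial point: after the change of unknown $(\varphi_1,\varphi_2)\mapsto(\varphi_1,-\varphi_2)$, the operator $\mathbf M$ becomes cooperative (the off-diagonal coupling $2\Lambda v_1 v_2$ acquires a favorable sign), and the transformed kernel element $(v_1',-v_2')$ is strictly positive on all of $\mathbb R$. A positive eigenfunction of a cooperative Schrödinger-type system forces the associated eigenvalue to be the bottom of the spectrum and to be simple, by the standard argument: if $\lambda_0=\inf\mathrm{spec}(\mathbf M)$ and $\Phi_0$ is a corresponding (generalized) eigenfunction, the strong maximum principle applied to the cooperative system shows $\Phi_0$ can be taken strictly positive, hence it cannot be orthogonal to the positive kernel element $(v_1',-v_2')$, which forces $\lambda_0=0$ and simplicity.

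The second step is to produce the spectral gap, i.e.\ a constant $c>0$, independent of large $\Lambda$, such that $\mathrm{spec}(\mathbf M)\setminus\{0\}\subset(c,\infty)$. I would argue by contradiction: suppose there is a sequence $\Lambda_n\to\infty$ and eigenvalues $\mu_n\to\mu_\infty\in[0,c_0]$ with eigenfunctions $\Phi_n=(\varphi_{1,n},\varphi_{2,n})$ normalized in $L^2\times L^2$ and orthogonal to the kernel $(v_{1,n}',v_{2,n}')$. The essential spectrum is controlled first: for $|z|$ large the potential matrix in $\mathbf M$ converges to $\mathrm{diag}(2,2)$ (since $v_i\to 1$ or $0$ exponentially), so by a Persson-type localization argument $\inf\mathrm{spec}_{\mathrm{ess}}(\mathbf M)\ge 2-o(1)$, uniformly in $\Lambda$; hence any $\mu_n$ in a fixed small interval is a genuine eigenvalue with an $L^2$ eigenfunction whose mass is concentrated in a bounded (in the $z$ variable) region, and in fact, using the weighted estimates implicit in (\ref{eqEmvoRRR})--(\ref{v1expdecay}), concentrated near the interface at scale $\Lambda^{-1/4}$. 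Rescaling via $\psi_{i,n}(x)=\varphi_{i,n}(\Lambda_n^{-1/4}x)$, the eigenvalue equation becomes $\mathbf L_n\Psi_n=\Lambda_n^{-1/2}\mu_n\Psi_n$ where $\mathbf L_n$ is the linearization of (\ref{eqBUsystem}) at $(\Lambda_n^{1/4}v_{i,n}(\Lambda_n^{-1/4}\cdot))$, which by (\ref{v1out}) converges to the linearized Berestycki-Wei operator $\mathbf L_\infty\Psi=(-\psi_1''+V_2^2\psi_1+2V_1V_2\psi_2,\,-\psi_2''+V_1^2\psi_2+2V_1V_2\psi_1)$. Since $\Lambda_n^{-1/2}\mu_n\to 0$, a limiting $\Psi_\infty$ would be a bounded kernel element of $\mathbf L_\infty$; the only such element (up to scalar), by the nondegeneracy of the Berestycki-Wei profile among solutions of the form (\ref{eqsymBU}) — the $h$-translation gives $(V_1',V_2')$ and the $\mu$-dilation gives $(V_1+xV_1',V_2+xV_2')$, but the latter grows linearly — is $(V_1',V_2')$, i.e.\ the interior rescaling of the kernel of $\mathbf M$. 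The orthogonality condition then has to be propagated to the limit to reach a contradiction, which requires knowing that the $L^2$ mass of $\Phi_n$ does not escape either to the outer region or to intermediate scales.

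The key technical tool making the contradiction argument work will be a uniform a priori coercivity estimate of the form
\begin{equation}\label{eqCoercivityPlan}
\langle \mathbf M\Phi,\Phi\rangle \ge c_0\|\Phi\|_{H^1\times H^1}^2 \quad\text{for all }\Phi\perp(v_1',v_2'),
\end{equation}
with $c_0>0$ independent of $\Lambda$, obtained by a localization/partition-of-unity argument: cut $\Phi$ into an outer piece (where $\mathbf M$ is a small perturbation of $-\partial_z^2+2$, hence coercive with constant $\approx 2$), an inner piece at scale $\Lambda^{-1/4}$ (where after rescaling $\mathbf M$ is $\Lambda^{1/2}$ times a small perturbation of the nonnegative operator $\mathbf L_\infty$, whose only degeneracy is along $(V_1',V_2')$, handled by the orthogonality), and an overlapping intermediate annulus (where a direct sign computation, using the exponential smallness in (\ref{eqEmvoRRR})--(\ref{v1expdecay}), shows the potential is large). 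Glueing these with the standard IMS commutator estimate, and absorbing the cross terms using the exponential decay, yields (\ref{eqCoercivityPlan}). The main obstacle is precisely this glueing at the matching scale $(\ln\Lambda)\Lambda^{-1/4}$: one must ensure that the negative contribution from the coupling term $2\Lambda v_1v_2\varphi_1\varphi_2$ in the transition region, where both $v_1$ and $v_2$ are of size $\Lambda^{-1/4}e^{-c\Lambda^{1/2}z^2}$ so that $\Lambda v_1v_2\sim\Lambda^{1/2}e^{-c\Lambda^{1/2}z^2}$, is dominated by the positive diagonal terms there — this is where the precise form of the asymptotics in Theorem \ref{thmMain}, rather than mere uniform convergence, is indispensable. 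Once (\ref{eqCoercivityPlan}) is established, the spectral gap and the claimed spectral structure follow from the spectral theorem together with the essential-spectrum bound.
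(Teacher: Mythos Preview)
Your first step---using the change of sign $(\varphi_1,\varphi_2)\mapsto(\varphi_1,-\varphi_2)$ to make the system cooperative and then invoking a Perron--Frobenius type argument to identify $0$ as the simple bottom eigenvalue---is correct and is essentially what the paper imports from \cite{alamaARMA15}. (A small slip: the potential matrix tends to $\mathrm{diag}(2,\Lambda-1)$ as $z\to+\infty$, not $\mathrm{diag}(2,2)$; your conclusion about the essential spectrum is nonetheless right for $\Lambda>3$.)

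There is, however, a genuine gap in your spectral-gap argument. Both your contradiction sketch and your IMS scheme treat the problem as if there were only two regimes: an inner region of width $\Lambda^{-1/4}$ governed by the Berestycki--Wei linearization, and an outer region where the potential is close to the constant~$2$. But in between---say for $z\in\bigl[(\ln\Lambda)\Lambda^{-1/4},\,1\bigr]$---the potential in the $\varphi_1$-equation is $3v_1^2-1+\Lambda v_2^2\approx 3U_1^2(z)-1$, which is \emph{negative} for $z$ small (it equals $-1$ at $z=0^+$) and only reaches $2$ as $z\to\infty$. So in that intermediate zone the operator is not ``$-\partial_z^2+2$'' but rather the linearization about the $\tanh$ profile $U_1$, whose spectrum on the line begins at~$0$ with eigenfunction $U_1'$. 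Hence the outer localized piece in your IMS decomposition is \emph{not} uniformly coercive, and the claim that ``the potential is large'' on the intermediate annulus is false for the non-decaying component. Equivalently, in your contradiction argument the assertion that the $L^2$ mass must concentrate at scale $\Lambda^{-1/4}$ is unjustified: nothing you wrote excludes an eigenfunction sequence that converges (on the outer scale) to a multiple of $U_1'$ on $(0,\infty)$ and of $U_2'$ on $(-\infty,0)$.

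The paper closes exactly this gap, but not by coercivity: it reuses the full contradiction machinery already developed for the linear projected problem (Step~1 of Proposition~\ref{proGenLinearQ}). There one shows that on the outer scale $\phi_{i,n}\to a_iU_i'$, on the inner scale $\phi_{i,n}(\Lambda_n^{-1/4}\,\cdot)\to \texttt{b}\,V_i'$, and then derives the \emph{matching relations} $a_1=\texttt{b}=a_2$ between these two limits; the orthogonality to $(v_1',v_2')$ then gives $a_1\int_0^\infty(U_1')^2+a_2\int_{-\infty}^0(U_2')^2=0$, forcing $a_1=a_2=\texttt{b}=0$. Only after this matching step can one conclude $\|\phi_{i,n}\|_{L^\infty}\to 0$. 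In your sketch the outer limit and the matching are missing entirely; without them the single orthogonality condition cannot by itself eliminate both the inner and outer approximate zero modes.
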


Furthermore, we have the following uniqueness result.

\begin{thm}\label{thmUniqNonSym}
	If $\Lambda$ is as in (\ref{eqHessian}), there exists at most one solution (modulo translations) to problem (\ref{eqEqGen})-(\ref{eqBdryGen}) with positive components such that one of them is strictly monotone.
\end{thm}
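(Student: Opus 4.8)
The plan is to prove Theorem \ref{thmUniqNonSym} by a sliding/moving-plane argument combined with the Hamiltonian identity \eqref{eqHamilton}. Suppose $(v_1,v_2)$ and $(w_1,w_2)$ are two solutions of \eqref{eqEqGen}-\eqref{eqBdryGen} with positive components, and assume, say, that $v_1$ is strictly increasing (by the boundary conditions \eqref{eqBdryGen} the monotone component is the one tending to $1$ at $+\infty$; a symmetric argument handles the other case). Since both solutions have the same limits at $\pm\infty$ and, by the general regularity theory cited after \eqref{eqHamilton}, are uniformly bounded and exponentially close to their limits, for $h>0$ large the translated solution $(v_1(\cdot+h),v_2(\cdot+h))$ lies strictly above $(w_1,-w_2)$ in the natural ordering adapted to competitive systems, i.e.\ $v_1(\cdot+h)>w_1$ and $v_2(\cdot+h)<w_2$ on all of $\R$. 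I would then decrease $h$ and let $h_*=\inf\{h : v_1(\cdot+h)\ge w_1 \text{ and } v_2(\cdot+h)\le w_2 \text{ on } \R\}$, the goal being to show $h_*$ realizes a point of contact that forces the two solutions to coincide.

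The key structural point making this work is that \eqref{eqEqGen} is a \emph{fully cooperative} system once we use the order-reversing substitution $\tilde v_2=-v_2$ (equivalently, work with the pair $(v_1,-v_2)$ vs.\ $(w_1,-w_2)$): the off-diagonal coupling $\Lambda v_2^2 v_1$ has the right sign so that the difference functions satisfy a cooperative linear system with a maximum principle on the whole line. Concretely, at $h=h_*$ set $\varphi_1=v_1(\cdot+h_*)-w_1\ge 0$ and $\varphi_2=w_2-v_2(\cdot+h_*)\ge 0$; subtracting the equations and using that $a^3-b^3=(a^2+ab+b^2)(a-b)$ with nonnegative coefficients, one gets $-\varphi_i''+c_i\varphi_i = (\text{nonnegative coupling})\cdot\varphi_{3-i}$ for continuous potentials $c_i$, so each $\varphi_i$ is a nonnegative supersolution of a scalar Schr\"odinger-type equation. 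If either $\varphi_i$ vanishes at an interior point it vanishes identically by the strong maximum principle, and then the first equation forces the other to vanish too; so at $h_*$ either there is strict inequality everywhere, or $(v_1(\cdot+h_*),v_2(\cdot+h_*))\equiv(w_1,w_2)$ and we are done. To rule out the first alternative I would use the Hamiltonian \eqref{eqGenHam}-\eqref{eqHamilton}: evaluating $H$ at both solutions gives an integral identity; alternatively, since $v_1$ is strictly monotone, at the infimum $h_*$ the translate cannot be detached from $(w_1,-w_2)$ uniformly — a standard argument shows that if $v_1(\cdot+h_*)>w_1$ strictly with a positive gap as $|z|\to\infty$ (which holds because both decay exponentially to the same limits at the matching rate, by the linearization around $(0,1)$ and $(1,0)$), then one could decrease $h_*$ slightly, contradicting its definition. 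The exponential decay rates are the same for both solutions because the linearized operators at the endpoints are $\Lambda$-independent after accounting for the $\tanh$ tails, which is exactly the kind of information recorded in Theorem \ref{thmMain} and its analogue for a second solution.

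I would organize the write-up as follows. First, record the uniform bounds and exponential convergence to $(0,1)$ and $(1,0)$ (decay rate governed by the smallest positive eigenvalue of the endpoint linearizations, here essentially $1$), valid for \emph{any} solution with positive components — this uses only the cited regularity theory plus a standard ODE shooting/barrier argument, not Theorem \ref{thmMain}. Second, establish that for $h$ large $(v_1(\cdot+h),-v_2(\cdot+h))$ dominates $(w_1,-w_2)$, using the matched exponential tails and the fact that translation moves $v_1$ toward $1$ faster on compact sets while the tails are controlled. Third, define $h_*$, prove it is finite (by the other-sign domination for $h$ very negative, using that $v_1$ is strictly increasing so translating far left pushes $v_1$ below $w_1$ near $+\infty$) and attained. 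Fourth, run the strong maximum principle dichotomy at $h=h_*$ via the cooperative structure. Fifth, exclude the strict-inequality case, and conclude $v\equiv w$ up to the translation by $h_*$.

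The main obstacle, I expect, is the fifth step — excluding the possibility that at $h=h_*$ both ordered inequalities are strict everywhere. In the scalar bistable case this is handled by a sliding argument that exploits strict monotonicity of one profile to create room to decrease $h_*$; here one has to do it for the system and, crucially, control the behaviour at $z=\pm\infty$ where the gap could close. The remedy is to combine (i) the strict monotonicity of $v_1$, which gives $v_1(z+h_*)-v_1(z)\ge \delta>0$ on any compact set for $h$ slightly less than $h_*$, with (ii) the uniform exponential decay estimates at the endpoints to show that outside a large compact set the ordering is preserved automatically; the Hamiltonian constraint \eqref{eqHamilton} can be invoked as an extra rigidity ingredient if the pure sliding argument needs reinforcement at the matching scale $z\sim\Lambda^{-1/4}$, though I suspect it is not strictly necessary. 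Note that the restriction \eqref{eqHessian} ($\Lambda>1$) rather than ``$\Lambda$ large'' is used only to guarantee the coupling coefficients have the right sign for the cooperative reformulation and that the endpoint states $(0,1)$, $(1,0)$ are nondegenerate — no smallness in $\Lambda^{-1}$ is needed anywhere in this argument.
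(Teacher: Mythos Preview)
Your sliding approach is genuinely different from the paper's. The paper first proves an auxiliary lemma (Lemma~\ref{lemHalfMonot}): for a positive solution of \eqref{eqEqGen}--\eqref{eqBdryGen}, strict monotonicity of one component forces strict monotonicity of the other. This reduces Theorem~\ref{thmUniqNonSym} to uniqueness among solutions satisfying \emph{both} $v_1'>0$ and $v_2'<0$. For that case (Proposition~\ref{proUniqS}) the paper does \emph{not} slide: it runs a continuation argument. At $\Lambda=3$ the sum $u=v_1+v_2$ solves the scalar bistable equation with $u\to 1$ at $\pm\infty$, hence $u\equiv 1$, and uniqueness is explicit. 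Since monotone solutions are nondegenerate (the kernel of $\textbf{M}$ is spanned by $(v_1',v_2')$, by \cite{alamaARMA15}), each such solution sits on a smooth, locally unique branch for $\Lambda>1$; two distinct solutions would give two branches that can never merge yet both hit $\Lambda=3$, contradicting the explicit uniqueness there.

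Your proposal has a gap precisely where Lemma~\ref{lemHalfMonot} enters. You assume only $v_1'>0$, but step~5 (excluding strict inequality at $h_*$) requires that decreasing $h$ slightly preserve \emph{both} orderings $v_1(\cdot+h)\ge w_1$ and $v_2(\cdot+h)\le w_2$. Monotonicity of $v_1$ controls only the first; without $v_2'<0$, the second can fail for $h$ arbitrarily close to $h_*$, and your set of admissible $h$ need not even be an interval. The Hamiltonian will not rescue this: both solutions satisfy $H\equiv -1/4$ identically, so comparing $H$ along $v$ and $w$ yields no information relating them. If you first invoke the half-monotone lemma to obtain $v_2'<0$ (and likewise for $w$), the sliding becomes a plausible alternative to the paper's continuation---essentially the strategy of \cite{berestycki2} for the blow-up system \eqref{eqBUsystem}---though step~5 at $\pm\infty$ still requires care: one must show that the leading exponential prefactors of $\varphi_i^{(h_*)}$ are strictly positive, which is not automatic from $\varphi_i^{(h_*)}>0$ alone.
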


An important consequence of the above theorem is that  the minimization problem
\begin{equation}\label{eqsigmaL}
\sigma_\Lambda=\inf_{(v_1,v_2)\in \mathcal{Y}}E_\Lambda(v_1,v_2),
\end{equation}
where
\begin{equation}\label{eqEnergyComplexV}
E_\Lambda(v_1,v_2)=\int_{\mathbb{R}}^{}\left\{\sum_{i=1}^{2}\left[\frac{1}{2}(v_i')^2
	+\frac{\left(1-v_i^2\right)^2}{4}\right]+\frac{\Lambda}{2}v_1^2v_2^2-\frac{1}{4}\right\}dz,
\end{equation}
and
\begin{equation}\label{eqY}
\mathcal{Y}=\left\{(v_1,v_2)\in H_{loc}^1(\mathbb{R})\times H_{loc}^1(\mathbb{R})\ \ \textrm{which\ sastisfy}\ \ (\ref{eqBdryGen})  \right\}.
\end{equation} has a unique solution
(modulo translations), which is that of Theorem \ref{thmMain}. In fact, this settles a conjecture from \cite[Sec. 5]{alamaARMA15}, in relation to
the stability properties of the family of minimizers of the  complex valued version of (\ref{eqEnergyComplexV}) with  respect to the associated nonlinear Schr\"{o}dinger dynamics. Armed with the estimates provided by our main theorem, we can give an asymptotic expression for its minimal energy $\sigma_\Lambda$:
\begin{cor}\label{corenergyExp}
As $\Lambda \to \infty$,
\[
\sigma_\Lambda=\frac{2\sqrt{2}}{3}+2\Lambda^{-\frac{1}{4}}\int_{-\infty}^{\infty}V'_1\left(V'_1-\psi_0\right)dx+\mathcal{O}\left( (\ln \Lambda)^3 \Lambda^{-\frac{3}{4}}\right).
\]
\end{cor}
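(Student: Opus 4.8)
The plan is to compute $\sigma_\Lambda = E_\Lambda(v_{1,\Lambda},v_{2,\Lambda})$ directly, using the precise asymptotics from Theorem \ref{thmMain}, splitting the real line into the inner region $I_{\mathrm{in}} = [-(\ln\Lambda)\Lambda^{-1/4},(\ln\Lambda)\Lambda^{-1/4}]$ and the two outer regions. First I would observe that Theorem \ref{thmUniqNonSym} together with the monotonicity \eqref{eqmonotTHM} guarantees that the constructed heteroclinic is the unique minimizer modulo translations, so $\sigma_\Lambda$ is exactly its energy; it then suffices to evaluate the integral \eqref{eqEnergyComplexV} along it. On the outer region $(-1)^{i+1}z \geq (\ln\Lambda)\Lambda^{-1/4}$ the component $v_{i,\Lambda}$ is, by \eqref{v1U}--\eqref{v1UGrad}, a translate of the one-dimensional kink $U_i$ with an error of size $\mathcal{O}((\ln\Lambda)\Lambda^{-3/4})e^{-c|z|}$, while the other component is superexponentially small by \eqref{v1expdecay}, so the coupling term $\tfrac{\Lambda}{2}v_1^2v_2^2$ contributes only $\mathcal{O}(\Lambda^{-m})$ there. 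Hence on the two outer regions the integrand reduces, up to $\mathcal{O}((\ln\Lambda)\Lambda^{-3/4})$ after integrating the exponentially localized error, to the standard Allen--Cahn kink density $\tfrac12(U_i')^2 + \tfrac14(1-U_i^2)^2 - \tfrac14 \cdot \tfrac12$; I should be slightly careful here since the $-1/4$ constant is not integrable over a half-line, but the Hamiltonian identity \eqref{eqGenHam}--\eqref{eqHamilton} shows $\tfrac12(U_i')^2 - \tfrac14(1-U_i^2)^2 = -\tfrac14$ along the kink, so $\tfrac12(U_i')^2 + \tfrac14(1-U_i^2)^2 - \tfrac18 = (U_i')^2 - \tfrac18 + \cdots$; the cleanest route is to rewrite the full outer integrand using the Hamiltonian as $(U_i')^2$ plus a manifestly integrable remainder, giving $\int_{0}^{\infty} (U_1')^2\,dz + \int_{-\infty}^{0}(U_2')^2\,dz = 2\int_0^\infty \operatorname{sech}^4(z/\sqrt2)/2 \, dz = \tfrac{2\sqrt2}{3}$, which is the leading term.

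Next I would treat the inner region. Here the natural substitution is $z = \Lambda^{-1/4}x$ with $|x|\leq \ln\Lambda$, under which $dz = \Lambda^{-1/4}dx$ and, by \eqref{v1out}--\eqref{v1outt}, $v_{i,\Lambda}(\Lambda^{-1/4}x) = \Lambda^{-1/4}V_i(x) + \mathcal{O}(\Lambda^{-3/4}+\Lambda^{-3/4}(\ln\Lambda)^3)$ and $v_{i,\Lambda}'(\Lambda^{-1/4}x) = V_i'(x) + \mathcal{O}(\Lambda^{-1/2}(\ln\Lambda)^2)$. Plugging in: the gradient term $\tfrac12(v_i')^2$ becomes $\tfrac{\Lambda^{-1/4}}{2}(V_i'(x))^2\,dx$ plus lower order, the coupling term $\tfrac{\Lambda}{2}v_1^2v_2^2$ becomes $\tfrac{\Lambda^{-1/4}}{2}V_1^2(x)V_2^2(x)\,dx$ plus lower order, while the double-well term $\tfrac14(1-v_i^2)^2 = \tfrac14 - \tfrac12 v_i^2 + \cdots$ and the constant $-\tfrac14$ partially cancel, leaving $-\tfrac12 v_i^2 + \tfrac14 v_i^4 = \mathcal{O}(\Lambda^{-1/2}) + \mathcal{O}(\Lambda^{-1})$, which after multiplying by $dz=\Lambda^{-1/4}dx$ and integrating over $|x|\leq\ln\Lambda$ is $\mathcal{O}((\ln\Lambda)\Lambda^{-3/4})$. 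Thus the inner contribution is $\Lambda^{-1/4}\int_{-\ln\Lambda}^{\ln\Lambda}\big[\tfrac12\sum_i (V_i')^2 + \tfrac12 V_1^2V_2^2\big]dx + \mathcal{O}((\ln\Lambda)^3\Lambda^{-3/4})$; since $V_2$ and $V_1'-\psi_0$ (on the right) and their mirrors decay like $e^{-cx^2}$ by \eqref{eqV1V2asympt}, extending the integral to all of $\mathbb{R}$ costs only $\mathcal{O}(\Lambda^{-m})$, and the symmetry \eqref{eqV1V2sym} gives $\int_{\mathbb R}(V_1')^2 = \int_{\mathbb R}(V_2')^2$.

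It remains to identify $\tfrac12\int_{\mathbb R}[(V_1')^2 + (V_2')^2 + V_1^2V_2^2]\,dx$ with $2\int_{\mathbb R}V_1'(V_1'-\psi_0)\,dx$, which is the real computational heart of the argument. Multiplying the first equation of \eqref{eqBUsystem} by $V_1$ and the second by $V_2$, adding, and integrating by parts over $[-R,R]$ gives $\int_{-R}^R[(V_1')^2+(V_2')^2]\,dx = -\int_{-R}^R[V_1^2V_2^2 + V_1^2V_2^2]\,dx + [V_1V_1' + V_2V_2']_{-R}^R$; here I need the boundary terms, and by \eqref{eqV1V2asympt} and \eqref{eqV1V2sym}, $V_1V_1' \to \psi_0(\psi_0 R + \kappa) \cdot$ hmm — more precisely $V_1(R)V_1'(R) = (\psi_0 R+\kappa)\psi_0 + o(1)$ and $V_2V_2'$ at $R$ is superexponentially small, while at $-R$ the roles swap, so $[V_1V_1'+V_2V_2']_{-R}^R = 2\psi_0^2 R + 2\psi_0\kappa + o(1)$, which diverges — signaling that I must instead work with the subtracted quantity $V_1' - \psi_0$ throughout, exactly as the statement suggests. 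The right way: note $\tfrac{d}{dx}(V_1'V_1 - \psi_0 V_1) \to$ finite, or better, multiply the ODE for $V_1$ by $(V_1 - \psi_0 x - \kappa)$ and for $V_2$ by $V_2$, whose combination is integrable, then integrate by parts; one obtains after bookkeeping the identity $2\int_{\mathbb R} V_1'(V_1'-\psi_0)\,dx = \tfrac12\int_{\mathbb R}[(V_1')^2+(V_2')^2] + \tfrac12\int_{\mathbb R}V_1^2V_2^2 \cdot(\text{const})$, and matching constants using \eqref{eqV1V2sym}. I expect the main obstacle to be precisely this careful Pohozaev/integration-by-parts manipulation with the linearly growing profile $V_1 \sim \psi_0 x + \kappa$: one must subtract the right affine function, track the non-vanishing boundary contributions, and use the symmetry \eqref{eqV1V2sym} and the Hamiltonian constant for the blow-up system (the analogue of \eqref{eqHamilton}, namely $\tfrac12(V_1')^2+\tfrac12(V_2')^2 - \tfrac12 V_1^2V_2^2 \equiv \tfrac12\psi_0^2$) to close the identity; everything else is a routine, if lengthy, bookkeeping of error terms already quantified in Theorem \ref{thmMain}.
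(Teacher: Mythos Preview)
Your overall strategy (identify $\sigma_\Lambda$ with the energy of the constructed heteroclinic via Theorem~\ref{thmUniqNonSym}, split into inner and outer regions, and use the estimates of Theorem~\ref{thmMain}) matches the paper, but there is a genuine gap in the bookkeeping that breaks the argument as written.

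In the inner region you claim that $\sum_i\tfrac14(1-v_i^2)^2-\tfrac14$ cancels to $-\tfrac12 v_i^2+\tfrac14 v_i^4$; in fact the sum over $i=1,2$ of $\tfrac14(1-v_i^2)^2$ has leading constant $\tfrac12$, so after subtracting $\tfrac14$ there remains a $+\tfrac14$ in the inner integrand, contributing $\tfrac12(\ln\Lambda)\Lambda^{-1/4}$ and not $\mathcal O((\ln\Lambda)\Lambda^{-3/4})$. Similarly, your inner density $\tfrac12\sum(V_i')^2+\tfrac12 V_1^2V_2^2$ tends to $\psi_0^2/2=\tfrac14$ as $|x|\to\infty$, so it cannot be ``extended to all of $\mathbb R$ costing only $\mathcal O(\Lambda^{-m})$'': the integral over $|x|\le\ln\Lambda$ already carries a divergent $\tfrac12(\ln\Lambda)$ piece. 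Finally, on the outer region you write $\int_0^\infty(U_1')^2$ rather than $\int_{(\ln\Lambda)\Lambda^{-1/4}+\psi_0^{-1}\kappa\Lambda^{-1/4}}^\infty(U_1')^2$, discarding another $\psi_0^2(\ln\Lambda)\Lambda^{-1/4}+\psi_0\kappa\Lambda^{-1/4}$. All of these discarded $\mathcal O((\ln\Lambda)\Lambda^{-1/4})$ contributions would cancel if tracked; without them, the ``identification'' you propose is between a divergent integral and a convergent one, and no Pohozaev manipulation can salvage it.

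The paper avoids all of this by first using the Hamiltonian identity \eqref{eqGenHam}--\eqref{eqHamilton} \emph{globally} to rewrite $E_\Lambda(v_1,v_2)=\int_{\mathbb R}\big[(v_1')^2+(v_2')^2\big]\,dz$. Then only the derivative estimates \eqref{v1UGrad}, \eqref{v1outt} are needed; the outer contribution becomes $\int_0^\infty(U_1')^2-\psi_0^2(\ln\Lambda)\Lambda^{-1/4}-\psi_0\kappa\Lambda^{-1/4}$, the inner becomes $\Lambda^{-1/4}\int_{-\ln\Lambda}^{\ln\Lambda}(V_1')^2\,dx$, and the divergent pieces cancel transparently. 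The final identification requires no integration by parts at all: it is the elementary identity
\[
\int_{-\infty}^{0}(V_1')^2\,dx+\int_{0}^{\infty}\big[(V_1')^2-\psi_0^2\big]\,dx-\psi_0\kappa=\int_{-\infty}^{\infty}V_1'(V_1'-\psi_0)\,dx,
\]
which follows from $\int_{-\infty}^0 V_1'=V_1(0)$ and $\int_0^\infty(\psi_0-V_1')=V_1(0)-\kappa$ via \eqref{eqV1V2asympt}.
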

The minimal energy $\sigma_\Lambda$ in (\ref{eqsigmaL}) represents the interface tension of the condensate. In \cite{vaninterface,mishonov},  a formal series  expansion of $\sigma_\Lambda$ in powers of $\Lambda^{-\frac{1}{4}}$ is given by using matched asymptotic analysis.
The
first term of this series was recovered rigorously in \cite{goldman2015phase} via variational arguments. In comparison, our Corollary \ref{corenergyExp} recovers rigorously the first three terms
of that prediction together with the correct order of the fourth term, modulo the 'artificial' logarithmic factor.

\subsection{Main steps of the proofs}
The idea of the proof of Theorem \ref{thmMain} is to construct an approximate solution and then to perturb it into a genuine solution, using the linearized operator.
This approach has been  extensively pursued
and thoroughly developed in the past years for constructing localized solutions to
elliptic problems, mainly involving spike-transition
layer  or bubbling phenomena.
However, as will be apparent, some important differences occur with respect to the standard technique.
This should already be expected  from
the irregular form of the  singular limit solution of the problem in hand. In particular, its corner layered
structure at the origin  forces the corresponding blow-up profiles  to be
unbounded, in sharp
contrast to the situation in the aforementioned widely studied concentration problems.

 Firstly, it is not difficult to construct an outer (exact) solution of (\ref{eqEqGen})-(\ref{eqBdryGen}) for $|z|\geq ( \ln \Lambda) \Lambda^{-1/4}$, which  satisfies
the expected asymptotic behaviour (\ref{v1U}). We do not prescribe conditions at the end points, as these will be controlled by the inner solution that we construct next. We point out that this construction is possible by the nondegeneracy of the solutions $U_1$, $U_2$ of (\ref{eqW1Gen}), (\ref{eqW2Gen}) respectively (see Lemma \ref{lemUnondeg} below). Then, we construct an inner solution for $|z|\leq (\ln
\Lambda)\Lambda^{-\frac{1}{4}}$
based on the blow-up profile $(V_1,V_2)$ of Proposition \ref{proBerestycki} and on its nondegeneracy  (in the natural sense, see Proposition \ref{proBerestNondegen} below). Actually, by exploiting the scaling invariance of (\ref{eqBUsystem}), illustrated by the parameter $\mu$
in (\ref{eqsymBU}), we can define a one-parameter family of such inner solutions. Let us note that there is no gain in exploiting the translation invariance of
(\ref{eqBUsystem}) for this purpose, as the whole problem (\ref{eqEqGen})-(\ref{eqBdryGen}) is itself translation invariant.
We emphasize that the construction of the inner solution relies heavily on the study of the linearization of
the blow-up problem (\ref{eqBUsystem}) (see Subsection \ref{subsecInner}), which could also prove useful in other
settings.
 More precisely,  the linearized operator of the blow-up system (\ref{eqBUsystem}) at $(V_1,V_2)$
contains an element $(E_1,E_2)$ with linear growth in its (formal) kernel,  due to the aforementioned invariance of (\ref{eqBUsystem}) under scaling. We can use a constant multiple of this element as the parameter in the  inner solutions.
In order to efficiently glue together the inner and outer approximations, we need to adjust the constant parameters involved in their separate constructions. For technical reasons, which will be clear from the proofs, instead of matching these approximations in the $C^1$-sense over an intermediate zone, we  match them continuously only at the points $\pm( \ln \Lambda) \Lambda^{-1/4}$.
At first sight, this unconventional argument  might look
rather counterintuitive, as it would create  jumps on the gradients at the gluing points. But, by choosing the free parameter in the inner solution so that  the
 Hamiltonian has the same value on each side, it turns out that these jumps on the gradient are actually transcendentally small.
 The resulting global approximation fails to be an exact solution to the problem by essentially just a transcendentally small factor. Naturally, the first thing that comes to mind is to try  to perturb it to a genuine solution by  some type of local inversion argument, through the study of the associated linearized operator about it.
 %In turn, this implies at once the existence of a sufficiently smooth global approximate solution to the problem which leaves a tra
%We perturb our approximate solution further using the kernel element $(E_1,E_2)$
%to get a one parameter family of solutions keeping the Hamiltonian at its expected
%value.
% The last step is to show that this approximate solution can be perturbed to
%a genuine solution for large $\Lambda$.
However, the translation invariance of (\ref{eqEqGen})-(\ref{eqBdryGen}) implies that the latter operator
is nearly non-invertible, as the derivative of the approximate solution fails to be in its kernel
by at most a transcendentally small factor. Nevertheless, by combining the linear analysis that we developed separately
for the inner and outer problems, we can show that the global linearized operator does not have other
elements in its spectrum tending to zero, as $\Lambda \to \infty$, besides  the transcendentally small eigenvalue
 at the bottom. In fact, the latter eigenvalue turns out to be simple (see Theorem \ref{thmNonDeg} and Proposition \ref{proGenLinearQ}). Consequently, we are led to use a Lyapunov-Schmidt variational reduction method for the perturbation argument.

The proof of our uniqueness result rests upon a homotopy argument, taking advantage of the nondegeneracy property of this type of monotone solutions to (\ref{eqEqGen})-(\ref{eqBdryGen}), which holds for $\Lambda$ in the range (\ref{eqHessian}).

\subsection{Physical motivation and known results}

 A rotating two-component Bose-Einstein condensate is described by the ground state of the following energy
\begin{equation}\label{eqEnergyOmega}
\begin{split}
\mathcal{E}(u_1,u_2)=\sum_{j=1}^2 \int_{\R^2} \left\{ \frac{|\nabla u_j|^2}{2} + \frac{V(|x|)}{2\ep^2}|u_j|^2 +\frac{g_j}{4\ep^2}|u_j|^4 -\Omega x^{\perp}\cdot(i u_j,\nabla u_j) \right\} \,dx \\
+\frac{g}{2\ep^2} \int_{\R^2} |u_1|^2|u_2|^2 \,dx
\end{split}
\end{equation}
 in the set
\begin{equation}\label{H_space_definition}
\mathcal{H}=\left\{(u_1,u_2):\ u_j\in H^1(\R^2;\C), \
\int_{\R^2}V(|x|)|u_j|^2\,dx<\infty, \ \|u_j\|_{L^2(\R^2)}=1,\
j=1,2\right\}.
\end{equation} The trapping potential $V(|x|)$ is usually taken to be $|x|^2$, corresponding to the experiments.
The parameters $g_1,g_2,g,\ep$ and $\Omega$ are positive:  $g_j$ is the self-interaction of each component (intracomponent coupling) while $g$ measures the effect of interaction between the two components (intercomponent coupling); $\Omega$ is the
angular velocity corresponding to the rotation of the condensate, $x^\perp=(-x_2,x_1)$ and $\cdot$ is the
scalar product for vectors, whereas $(\ ,\ )$ is the complex scalar product, so
that  we have
\[
x^{\perp}\cdot(iu,\nabla u)=x^{\perp}\cdot\frac{iu\nabla\bar u-i\bar
u\nabla u}{2}= -x_2\frac{iu\partial_{x_1}\bar u-i\bar
u\partial_{x_1}u}{2}+x_1\frac{iu\partial_{x_2}\bar u-i\bar
u\partial_{x_2}u}{2}.
\]
 The existence and behaviour of the
minimizers in the limit when $\ep$ is
small, describing strong interactions, is also called the Thomas-Fermi limit. Even though  the
 interaction is only through the modulus, it can produce effects on the phases of each component, and in particular
  on the singularities or vortices. A full phase diagram has been computed in \cite{AM}.

  If the condition $
g^2<g_1g_2$ is satisfied, it means that
 the two components $u_1$ and $u_2$ of the
minimizers can coexist, as opposed to the segregation case
$g^2>g_1g_2$. This is discussed and explained in \cite{ANS,AM}. The ground state at $\Omega=0$ in the
coexistence case has been studied in the small $\eps$ limit in \cite{ANS,gallo}.
 
 In the present paper, we are interested
 in the segregation case. The $\Gamma$ limit of (\ref{eqEnergyOmega})-(\ref{H_space_definition}) with $\Omega=0$ in the case where $g^2>g_1g_2$, $g_1=g_2$ and $g=g_\varepsilon \to \infty$
   has been studied in \cite{AL}. A change of functions
   is used, namely $(v,\varphi)$, where
   \begin{equation}\label{eqDefRad}
   v^2=u_1^2+u_2^2\hbox{ and } \cos\varphi= \frac {u_1^2-u_2^2}{u_1^2+u_2^2}.\end{equation}
   A $\Gamma$ limit is obtained on the functional for $(v,\varphi)$.
 The limiting problem is given by two domains $D_1$ (for component 1) and $D_2$ (for component 2) for
 which the interface minimizes a perimeter type problem weighted by the trapping potential.
 
 As conjectured in \cite{berestycki-wei2012}, the minimum of $v$ satisfies \[c \Lambda ^{-\frac{1}{4}}\leq \inf_{\mathbb{R}}v \leq C \Lambda^{-\frac{1}{4}},\]
for some constants $c,C>0$  independent of large $\Lambda$. Actually, an analogous estimate was established very recently in \cite{sz2,wang2015uniform} for uniformly bounded solutions to a broad class of elliptic systems. Clearly, the above estimate is a particular consequence of our Theorem \ref{thmMain} and the comments preceding Corollary \ref{corenergyExp}.
 
   Further $\Gamma$ convergence results for  $g=g_\eps$ fixed have been proved by \cite{GL}, and extended for $g_1 \neq g_2$ by \cite{goldman2015phase}. In these two papers, near the  interface, the interaction between the two components of the condensate is  governed by  the system
 \begin{eqnarray*}&&- u_1'' +g_1 u_1^2 u_1+g u_2^2 u_1=\lambda_1 u_1,\\
 &&- u_2'' +g_2 u_2^2 u_2+g u_1^2 u_2=\lambda_2 u_2,
 \end{eqnarray*}
for some constants $\lambda_1,\lambda_2>0$ corresponding to the Lagrange multipliers of (\ref{eqEnergyOmega})-(\ref{H_space_definition}). In the case where $g=\Lambda$ and $g_i,\lambda_i$ are  equal to 1, this gives rise to our one dimensional problem and they prove that the $\Gamma$ limit of the full 2D problem in a bounded domain is given by the minimization of $\sigma_\Lambda$.

\begin{rem}
In (\ref{eqEqGen}), we have taken all the constants in front of the non-coupled terms
to be equal to one. However, all of our arguments carry over easily to the positive solutions of
\begin{equation}\label{eqEqGenOld}
	\left\{
	\begin{array}{c}
	-v_1''+g_1v_1^3-\lambda_1 v_1+\Lambda v_2^2 v_1=0, \\
	\\
	-\nu v_2''+g_2v_2^3-\lambda_2v_2+\Lambda v_1^2 v_2=0,
	\end{array}
	\right.
	\end{equation}
	\begin{equation}\label{eqBdryGenOld}
	(v_1,v_2)\to \left(0,\sqrt{\frac{\lambda_2}{g_2}}\right)\ \textrm{as}\ z\to -\infty,\ \ (v_1,v_2)\to
	\left(\sqrt{\frac{\lambda_1}{g_1}},0\right)\ \textrm{as}\ z\to  +\infty,
	\end{equation}
	 for values of the parameter \begin{equation}\label{eqHessian222}\Lambda>\sqrt{g_1g_2},\end{equation} assuming that $\nu>0$, and the positive constants $g_1,g_2,\lambda_1,\lambda_2$ satisfy
	\begin{equation}\label{eqEqPres}
	\frac{\lambda_1^2}{g_1}=\frac{\lambda_2^2}{g_2}.
	\end{equation}
	 Thanks to (\ref{eqEqPres}), all the constants $g_i,\lambda_i$, $i=1,2,$ and $\nu$  can be scaled out since a solution is given by
\[
\sqrt{\frac{g_1}{\lambda_1}}v_1\left(\sqrt{\frac{1}{\lambda_1}}z \right),\ \ \sqrt{\frac{g_2}{\lambda_2}}v_2\left(\sqrt{\frac{\nu}{\lambda_2}}z \right),
		\]
		where $v_1,v_2$ satisfy (\ref{eqEqGen})-(\ref{eqBdryGen}), while the new coupling constant is $\tilde{\Lambda}=\frac{\lambda_2 \Lambda}{\lambda_1 g_2}$.
We point out that (\ref{eqEqPres}) is a necessary condition for the existence of solutions to (\ref{eqEqGenOld})-(\ref{eqBdryGenOld}) since the corresponding Hamiltonian is conserved and therefore the limit at $+/-\infty$ has to be the same.

Conversely, by means of variational arguments, it was shown recently in \cite{alamaARMA15} and \cite{goldman2015phase} that condition (\ref{eqEqPres}) is also sufficient (see  also \cite{alikakosFuscoIndiana,sternbergHeteroclinic2016}) for the existence of solutions.
		
\end{rem}

\hfill

The basic interaction of a two component condensate is through a modulus term, but
 other interactions include a Rabi coupling or a spin orbit coupling.
 The precise knowledge of the interface behaviour will prove useful to analyze more
    complicated patterns: \begin{itemize} \item the segregation case in the spin orbit coupling where there are vortex sheets \cite{KT},
     \item in the case of Rabi coupling, a vortex and anti-vortex pair create a vortex molecule, where two
     vortices are eventually connected by a domain wall of relative phase \cite{KTU,son},
     \item
    half vortices, where a vortex in one component corresponds to a peak
    in the other component \cite{catelani}.\end{itemize}
    In order to analyze the singularity patterns in all these cases, one needs to make an energy expansion
    in order to determine the energy of the specific configuration. Because of the transition layer between
    the two species, one needs to have a precise estimate of the decrease of the modulus of the wave
    functions, which is precisely given by our main theorem. Therefore,
    what we prove in Theorems \ref{thmMain}, \ref{thmNonDeg}
     is expected to be  extremely useful for the construction of upper bounds for these patterns.
  %\subsection{Organization of the paper}

\subsection{Outline of the paper} In Section \ref{secApprox}, we construct our approximate solution to the problem, in the outer and  inner regions separately. Then, in Section \ref{secMatch} we adjust them further so that they match conveniently for our purposes. In Sections \ref{secSolInner} and \ref{secSolOutReal}, we perturb the resulting inner and outer approximations respectively to   genuine ones .  In Section \ref{secMainRes}, we prove Theorems \ref{thmMain} and \ref{thmNonDeg}. In Section \ref{secUniq} we prove Theorem \ref{thmUniqNonSym}. Finally, in Section \ref{secEnergetic} we show Corollary \ref{corenergyExp}.
%\subsection{Notation}
%By $c,\ C$ we will denote small, large positive generic  constants that are independent of large $\Lambda>0$. The value of $\Lambda>0$ will  increase from line to line so that all the previous relations hold.
\subsection{Notation}\label{subsecNotation}
By $c/C$ we will denote small/large positive generic  constants that are independent of large $\Lambda>0$ and whose value will decrease/increase
as the paper moves on. The value of $\Lambda>0$ will also increase from line to line so that all the previous relations hold. According  to the Landau notation, a number $\rho$ will be of order $\mathcal{O}(\Lambda^{-m})$
as $\Lambda \to \infty$, for some $m\in \mathbb{R}$, if $|\rho|\leq C \Lambda^{-m}$ for $\Lambda$ sufficiently large; a number $\rho$ will be of order $o(\Lambda^{-m})$ as $\Lambda \to \infty$, for some $m \in \mathbb{R}$, if $\Lambda^{m}\rho \to 0 $ as $\Lambda \to \infty$;  a number $\rho$ will be of order $\mathcal{O}(\Lambda^{-\infty })$ as $\Lambda \to \infty$  if $\rho =\mathcal{O}(\Lambda^{-m})$, for any $m>1$, as $\Lambda \to \infty$. We will remove the obvious dependence on $\Lambda$ of various
functions.

\section{The approximate solution}
% $(v_{1,ap},v_{2,ap})$}
\label{secApprox} In this section, we will construct a sufficiently
good approximate solution to  problem
(\ref{eqEqGen})-(\ref{eqBdryGen}) for large $\Lambda>0$.
\subsection{The outer solution $(v_{1,out},v_{2,out})$}% $(v_{1,out},v_{2,out})$}
\label{subsecOuter} In this subsection, we will construct an
approximate solution to problem (\ref{eqEqGen})-(\ref{eqBdryGen})
 except at the origin, where it
looses its smoothness.
\subsubsection{The outer profiles $U_1,\ U_2$}\label{subsubsecU}
The building blocks of this construction will be the unique
solutions $U_1, U_2$ of problems (\ref{eqW1Gen}), (\ref{eqW2Gen}) respectively. Actually, we will restrict our attention to $U_1$, as the corresponding
analysis for $U_2$ is completely analogous.
For future reference, let us note that
\begin{equation}\label{eqUmonot}
U'_1(z)>0, \ z\geq 0,
\end{equation}
and
\begin{equation}\label{eqUdec}
1-U_1(z)+U_1'(z)-U_1''(z)\leq Ce^{-cz},\ z\geq 0.
\end{equation}

We will also need the following  properties for the associated
linearized operator, which are well known and essentially follow
from (\ref{eqUmonot})-(\ref{eqUdec}).
\begin{lem}\label{lemUnondeg}
	Let $\phi \in C^2[0,\infty)$ be bounded and satisfy
	\begin{equation}\label{eqbistable}
	-\phi''+\left(3U_1^2(z)-1 \right)\phi=0,\ \ z>0.
	\end{equation}
%	for some constant $\nu\leq 0$.
 Then, we have that
	\[ \phi \equiv c U_1' \ \  \textrm{for\ some}\ \   c\in \mathbb{R}.
	\]
In particular, if $\phi(0)=0$, then $\phi \equiv 0$.
\end{lem}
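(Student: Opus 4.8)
The plan is to exploit the fact that \eqref{eqbistable} is a second-order linear scalar ODE, whose solution space is two-dimensional, and to identify $U_1'$ as one distinguished solution. First I would check that $U_1'$ solves \eqref{eqbistable}: differentiating the equation $U_1''+U_1-U_1^3=0$ in \eqref{eqW1Gen} gives $U_1'''+U_1'-3U_1^2U_1'=0$, i.e. $-(U_1')''+(3U_1^2-1)U_1'=0$, so $\phi=U_1'$ is indeed a solution, and by \eqref{eqUmonot} it is positive on $(0,\infty)$. Since the ODE is linear and homogeneous, any other solution $\phi$ is, by reduction of order, of the form $\phi = c\,U_1' + d\,U_1'\int^z \frac{ds}{(U_1'(s))^2}$ for constants $c,d$, on any interval where $U_1'>0$ (here, all of $(0,\infty)$; near $z=0$ one uses that $U_1'(0)=\psi_0>0$ by the Hamiltonian relation \eqref{eqGenPsi0} so there is no singularity).

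The heart of the argument is then to show that the second, linearly independent solution $\Psi(z):=U_1'(z)\int_1^z (U_1'(s))^{-2}\,ds$ is unbounded on $[0,\infty)$, which forces $d=0$ and hence $\phi\equiv cU_1'$. For this I would use the asymptotics of $U_1$ as $z\to\infty$: since $U_1(z)\to 1$, the coefficient $3U_1^2-1\to 2>0$, and more precisely $1-U_1(z)\le Ce^{-cz}$ and $U_1'(z)\le Ce^{-cz}$ by \eqref{eqUdec}. Thus $(U_1'(s))^{-2}$ grows at least like $e^{2cs}$ (in fact like $e^{\sqrt2 s}$), so the integral $\int_1^z (U_1')^{-2}\,ds$ blows up, while the prefactor $U_1'(z)$ decays only like $e^{-cz}$; a careful matching of the exponential rates (using that the decay rate of $U_1'$ is exactly $\sqrt2$, the square root of the limit $3U_1^2-1\to 2$ of the potential, by standard ODE asymptotics / a Gronwall-type estimate) shows the product $\Psi(z)\to\infty$. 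Alternatively, and perhaps more cleanly, I would argue by a phase-plane / sub-supersolution comparison: a bounded solution $\phi$ of \eqref{eqbistable} that is linearly independent from $U_1'$ would have to change sign or grow, contradicting boundedness, because the operator $-\frac{d^2}{dz^2}+(3U_1^2-1)$ with the positive decaying kernel element $U_1'$ cannot have two linearly independent bounded (equivalently, by the decay of the potential to a positive constant, decaying) solutions — this is the standard fact that a positive solution is the minimal/principal one and any independent solution escapes.

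Once boundedness forces $\phi\equiv cU_1'$, the final sentence is immediate: if in addition $\phi(0)=0$, then $cU_1'(0)=c\psi_0=0$, so $c=0$ and $\phi\equiv0$. The main obstacle, and the only place requiring genuine care, is the rigorous verification that the Wronskian-generated second solution is unbounded — i.e. controlling the competition between the exponential decay of $U_1'$ and the exponential growth of $\int (U_1')^{-2}$; this is where one needs the precise exponential rate of $U_1'$, obtainable either from the explicit formula $U_1(z)=\tanh(z/\sqrt2)$ (giving $U_1'(z)=\tfrac{1}{\sqrt2}\,\mathrm{sech}^2(z/\sqrt2)\sim 2\sqrt2\,e^{-\sqrt2 z}$, so $(U_1')^{-2}\sim \tfrac18 e^{2\sqrt2 z}$ and $\Psi(z)\sim \tfrac{1}{2\sqrt2}\,e^{\sqrt2 z}\to\infty$) or, in the spirit of the authors' remark that the explicit form is inessential, from the general asymptotic theory for \eqref{eqbistable} with a potential tending to a positive limit. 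Everything else is routine ODE bookkeeping.
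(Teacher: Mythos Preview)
Your proof is correct and follows the same overall strategy as the paper: both arguments identify $U_1'$ as a positive solution of \eqref{eqbistable}, show that the second linearly independent solution must be unbounded, and conclude using $U_1'(0)=\psi_0\neq 0$. The only difference is in execution: where you compute the second solution explicitly via reduction of order and then match exponential rates, the paper argues more directly that if both independent solutions were bounded then (since bounded solutions have bounded derivatives by interpolation, and $U_1',U_1''\to 0$ by \eqref{eqUdec}) the constant Wronskian would vanish, contradicting linear independence. This Wronskian argument sidesteps the rate-matching computation entirely, but your approach is equally valid.
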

\begin{proof} The desired assertions of the lemma follow immediately  from the observation that, besides of $U_1'$, the differential operator
	in the lefthand side of (\ref{eqbistable})  has also  an unbounded function in its two-dimensional kernel. Indeed,  otherwise the Wronskian would be zero,  by (\ref{eqUdec}) and the fact that bounded elements in the kernel also have bounded derivatives (by a standard interpolation argument, see for instance (\ref{eqInterpoNew}) below).
\end{proof}
%\begin{proof}
%Suppose that $\phi$ is non trivial.  Since $\left(3U^2(z)-1
%\right)\to 2$ as $z\to \infty$, by Sturm's theorem and without
%loss of generality, we may assume that there exists a $z_0\geq 0$
%such that $\phi(z_0)=0$ and $\phi(z)>0$ for $z>z_0$. Then, we can
%multiply the equation by $U'$, integrate by parts the result over
%$(z_0,\infty)$, and make use of (\ref{eqUmonot})-(\ref{eqUdec}) to
%find that $\phi'(z_0)\leq 0$. We have thus arrived at a
%contradiction, which completes the proof.
%\end{proof}
\subsubsection{The construction of the outer approximate
solution}\label{subsubsecOutDef} We can now define our outer approximate
solution as
\begin{equation}\label{eqOutDef}
v_{1,out}(z)=U_1(z+\xi_1)+\tau_1 U_1'(z+\xi_1),\ v_{2,out}(z)=0 \
\textrm{for}\ z\geq (\ln \Lambda)\Lambda^{-\frac{1}{4}},
\end{equation}
with
\begin{equation}\label{eqconstr}
\xi_1=\mathcal{O}(\Lambda^{-\frac{1}{4}})\ \textrm{and}\
\tau_1=\mathcal{O}\left((\ln \Lambda)\Lambda^{-\frac{3}{4}}\right)\ \textrm{as}\ \Lambda \to
\infty,
\end{equation}
to be determined. Analogously we define it for $z\leq -(\ln \Lambda)\Lambda^{-\frac{1}{4}}$. We point out that the choice of the power $1/4$ in (\ref{eqOutDef}) is motivated by a
formal blow-up analysis (see the next subsection), whereas the choice of powers in (\ref{eqconstr}) is an a-posteriori result of matching considerations (see Subsection \ref{subsecMatchCont} below).
\subsubsection{The remainder of the outer approximate solution}\label{subsubsecOutRem}
We note that $(v_{1,out},v_{2,out})$ satisfies the desired
asymptotic behaviour (\ref{eqBdryGen}) exactly, while it satisfies the
system (\ref{eqEqGen}) approximately as is shown in the next
lemma.
\begin{lem}\label{lemRemOut}
The remainder
\[
R(v_{1,out},v_{2,out})=\left(\begin{array}{c}
  -v_{1,out}''+v_{1,out}^3-v_{1,out}+\Lambda v_{2,out}^2 v_{1,out}  \\
    \\
  -v_{2,out}''+v_{2,out}^3-v_{2,out}+\Lambda v_{1,out}^2 v_{2,out}  \\
\end{array}
 \right)
\]
that is left by $(v_{1,out},v_{2,out})$ in (\ref{eqEqGen}) satisfies
\[
R(v_{1,out},v_{2,out})=\left(\begin{array}{c}
   \mathcal{O}\left((\ln \Lambda)^2\Lambda^{-\frac{3}{2}}\right)e^{-cz} \\
    \\
 0 \\
\end{array}
 \right),
\]
uniformly for $z\geq (\ln \Lambda)\Lambda^{-\frac{1}{4}}$, as $\Lambda \to \infty$, and an analogous estimate holds for $z\leq -(\ln \Lambda)\Lambda^{-\frac{1}{4}}$.
\end{lem}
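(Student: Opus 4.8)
The plan is a direct substitution followed by elementary exponential estimates; there is essentially no serious technical obstacle. First, since $v_{2,out}\equiv 0$ on $\left\{z\ge (\ln\Lambda)\Lambda^{-\frac{1}{4}}\right\}$, the entire second line of $R(v_{1,out},v_{2,out})$ vanishes identically, and in the first line the coupling term $\Lambda v_{2,out}^2 v_{1,out}$ drops out as well, so it only remains to bound
\[
R_1:=-v_{1,out}''+v_{1,out}^3-v_{1,out}\qquad \textrm{on}\ \ \left\{z\ge (\ln\Lambda)\Lambda^{-\frac{1}{4}}\right\}.
\]

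Next I would write $v_{1,out}=U+\tau_1 U'$ with $U:=U_1(z+\xi_1)$ (so that $U':=U_1'(z+\xi_1)$, etc.), expand the cube, and use the identity $-U''+U^3-U=0$ coming from (\ref{eqW1Gen}) together with its differentiated form $-U'''+(3U^2-1)U'=0$ (which is (\ref{eqbistable}) for $\phi=U_1'$). The terms of order $1$ and of order $\tau_1$ then cancel exactly, leaving
\[
R_1=3\tau_1^2\,U\,(U')^2+\tau_1^3\,(U')^3,
\]
all quantities being evaluated at $z+\xi_1$.

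Finally I would estimate the right-hand side. On $[0,\infty)$ one has $0\le U_1\le 1$, while combining (\ref{eqUmonot}) with $U_1''=U_1^3-U_1\le 0$ shows that each of the three nonnegative summands in (\ref{eqUdec}) is individually $\le Ce^{-cz}$; in particular $0\le U_1'(w)\le Ce^{-cw}$ for $w\ge 0$. Since $z\ge (\ln\Lambda)\Lambda^{-\frac{1}{4}}$ while $|\xi_1|\le C\Lambda^{-\frac{1}{4}}$ by (\ref{eqconstr}), for $\Lambda$ large the separation of scales gives $z+\xi_1\ge 0$ and $e^{-c(z+\xi_1)}\le Ce^{-cz}$, hence $|U(z+\xi_1)|\le 1$ and $|U'(z+\xi_1)|\le Ce^{-cz}$. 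Plugging in $|\tau_1|\le C(\ln\Lambda)\Lambda^{-\frac{3}{4}}$, and observing that the term $\tau_1^3(U')^3=\mathcal{O}\left((\ln\Lambda)^3\Lambda^{-\frac{9}{4}}\right)e^{-cz}$ is of strictly smaller order than $\tau_1^2 U(U')^2=\mathcal{O}\left((\ln\Lambda)^2\Lambda^{-\frac{3}{2}}\right)e^{-cz}$, we obtain $|R_1|\le C(\ln\Lambda)^2\Lambda^{-\frac{3}{2}}e^{-cz}$, which is the asserted bound. The estimate on $\left\{z\le -(\ln\Lambda)\Lambda^{-\frac{1}{4}}\right\}$ follows verbatim upon replacing $U_1,\xi_1,\tau_1$ by their counterparts attached to the second component and using that the first component of the outer solution vanishes there, the monotonicity and exponential-decay properties of $U_2$ being obtained from (\ref{eqW2Gen}) exactly as for $U_1$. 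The one point meriting a word of care is that the shifted argument $z+\xi_1$ remain in $[0,\infty)$, where the above bounds on $U_1$ are available; this holds for $\Lambda$ large precisely because $(\ln\Lambda)\Lambda^{-\frac{1}{4}}\gg\Lambda^{-\frac{1}{4}}$.
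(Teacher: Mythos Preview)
Your proof is correct and follows essentially the same approach as the paper: both substitute $v_{1,out}=U_1(z+\xi_1)+\tau_1 U_1'(z+\xi_1)$, use that $U_1$ and $U_1'$ annihilate the terms of order $1$ and $\tau_1$ respectively, and then bound the surviving quadratic and cubic terms in $\tau_1$ via (\ref{eqUdec}) and (\ref{eqconstr}). Your version is simply more detailed, in particular making explicit the cancellation of the linear-in-$\tau_1$ term via the differentiated equation and the verification that $z+\xi_1\ge 0$.
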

\begin{proof}
Keeping in mind that $v_{2,out}$ is identically zero, we
 observe that,  by virtue of (\ref{eqW1Gen}), we have
\[
-v_{1,out}''+v_{1,out}^3-v_{1,out}=\tau_1^3\left(U_1'(z+\xi_1)
\right)^3+3\tau_1^2 U_1(z+\xi_1)\left(U_1'(z+\xi_1) \right)^2,
\]
and then use (\ref{eqUdec}), (\ref{eqconstr}). The proof for $z\leq -(\ln \Lambda)\Lambda^{-\frac{1}{4}}$ is analogous.
\end{proof}
\subsection{The inner approximate solution $(v_{1,in},v_{2,in})$} \label{subsecInner}
In this subsection, we will construct an approximate solution to the
 system (\ref{eqEqGen}) which, however, is effective only in a
small neighborhood of the origin. Nevertheless, it will have the
appropriate behaviour so as to be easily ``continued'' away from
the origin by the outer solution of the previous subsection.

\subsubsection{The blow-up profile $(V_1,V_2)$} \label{subsubsecV1V2}
Based on a formal blow-up analysis and the behaviour of the outer approximate
solution near the origin, the building blocks will be special solutions of a limiting
problem, described in Proposition \ref{proBerestycki} which is due to
\cite{berestycki-wei2012,berestycki2}.

Moreover, by the convexity of $V_1,V_2$, it follows easily that
\begin{equation}\label{eqV1V2monot}
V'_1>0,\ V'_2<0,\ x\in \mathbb{R}.
\end{equation}
Actually, it is not hard to show that $\kappa> 0$.
 Indeed, we observe that the auxiliary function
\[
R(x)=V_1(x)-\psi_0x-V_2(x),\ \ x\geq 0,
\]
satisfies
\[
{R''}=V_1V_2(V_2-V_1)<0,\ \ x>0,
\]
while
\[
R(0)=0,\ \ \lim_{x\to +\infty}R(x)=\kappa.
\]

The invariance   of system (\ref{eqBUsystem}) under translation
and scaling, described in (\ref{eqsymBU}), implies that the
associated linearized operator \begin{equation}\label{eqL} L\left(\begin{array}{c}
  \Phi_1 \\

  \Phi_2 \\
\end{array}
\right)=\left(\begin{array}{c}
  -\Phi''_1+V_2^2\Phi_1+2V_1V_2\Phi_2 \\

  -\Phi''_2+V_1^2\Phi_2+2V_1V_2\Phi_1 \\
\end{array}
\right)
\end{equation}
has
\begin{equation}\label{eqkernel}
(V'_1,V'_2)\ \ \textrm{and}\ \
(xV'_1+V_1,xV'_2+V_2)
\end{equation}
amongst  its four-dimensional kernel. The next proposition, also proven in \cite{berestycki-wei2012}, will play
a key role in what will follow.
\begin{pro}\label{proBerestNondegen}
If $\Phi_1, \Phi_2 \in C^2(\mathbb{R})\cap L^\infty(\mathbb{R})$ satisfy
\[
L\left(\begin{array}{c}
  \Phi_1 \\

  \Phi_2 \\
\end{array}
\right)=\left(\begin{array}{c}
  0 \\

  0 \\
\end{array}
\right),
\]
then
\[
(\Phi_1,\Phi_2)\equiv\lambda (V'_1,V'_2)
\]
for some $\lambda \in \mathbb{R}$.
\end{pro}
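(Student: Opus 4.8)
The plan is to reduce Proposition \ref{proBerestNondegen} to Lemma \ref{lemUnondeg}-type ODE considerations by a combination of qualitative ODE analysis in the far field and a sliding/maximum-principle argument in the bulk. First, I would establish a priori control on a bounded solution $(\Phi_1,\Phi_2)$ of $L(\Phi_1,\Phi_2)=0$: by interpolation (of the kind invoked after Lemma \ref{lemUnondeg}), $\Phi_1',\Phi_2'$ are bounded as well, and then, using $V_2=\mathcal{O}(e^{-cx^2})$ and $V_1=\psi_0 x+\kappa+\mathcal{O}(e^{-cx^2})$ as $x\to+\infty$ from (\ref{eqV1V2asympt}), the second equation becomes $-\Phi_2''+(\psi_0 x+\kappa)^2\Phi_2=-2V_1V_2\Phi_1=\mathcal{O}(e^{-cx^2})$, an equation whose potential grows quadratically; a standard comparison argument (comparing with $e^{-\delta x^2}$ barriers) forces $\Phi_2=\mathcal{O}(e^{-cx^2})$ as $x\to+\infty$, and symmetrically $\Phi_1=\mathcal{O}(e^{-cx^2})$ as $x\to-\infty$ via (\ref{eqV1V2sym}). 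Then the first equation reads $-\Phi_1''=-V_2^2\Phi_1-2V_1V_2\Phi_2=\mathcal{O}(e^{-cx^2})$ for $x\to+\infty$, so $\Phi_1$ is asymptotically affine, $\Phi_1(x)=ax+b+o(1)$; since $\Phi_1$ is bounded, $a=0$ and $\Phi_1\to b$, and likewise $\Phi_2\to b'$ for some constants $b,b'$.

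Second, I would subtract off the explicit kernel element $(V_1',V_2')$ to normalize: set $\tilde\Phi_i=\Phi_i-\lambda V_i'$ with $\lambda$ chosen so that, say, $\tilde\Phi_1\to 0$ as $x\to+\infty$ (possible since $V_1'(x)\to\psi_0>0$ while $\Phi_1\to b$, so take $\lambda=b/\psi_0$; note $V_2'\to 0$, so automatically $\tilde\Phi_1\to 0$ and $\tilde\Phi_2\to b'$ at $+\infty$). The goal becomes to show $(\tilde\Phi_1,\tilde\Phi_2)\equiv 0$. The cleanest route is a maximum-principle/energy argument exploiting the \emph{cooperative-after-sign-change} structure: writing $W_1=\tilde\Phi_1$, $W_2=-\tilde\Phi_2$ (or testing against suitable combinations), the system for $(\tilde\Phi_1,\tilde\Phi_2)$ has the off-diagonal coupling $2V_1V_2>0$, so in the variables $(\tilde\Phi_1,\tilde\Phi_2)$ it is "fully coupled" and one can run a sweeping argument: multiply the first equation by $\tilde\Phi_1$, the second by $\tilde\Phi_2$, integrate, and use that the quadratic form $\int \big((\tilde\Phi_1')^2+(\tilde\Phi_2')^2+V_2^2\tilde\Phi_1^2+V_1^2\tilde\Phi_2^2+4V_1V_2\tilde\Phi_1\tilde\Phi_2\big)$ controls the solution once the boundary terms vanish (guaranteed by the Gaussian decay established above). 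Positivity of $V_1,V_2$ makes the cross term the only obstruction, but completing the square against $(V_2\tilde\Phi_1+V_1\tilde\Phi_2)^2$ and noting the extra negative term can be absorbed because $(V_1',V_2')$ has already been divided out — precisely the nondegeneracy input — forces $\tilde\Phi_1=\tilde\Phi_2\equiv 0$.

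Alternatively, and perhaps more robustly, one can cite the sliding/moving-plane analysis of \cite{berestycki-wei2012}, which is exactly how Proposition \ref{proBerestNondegen} is proved there: the linearized kernel is spanned by derivatives of the two-parameter family (\ref{eqsymBU}), namely $(V_1',V_2')$ from translations and $(xV_1'+V_1,xV_2'+V_2)$ from scalings, but the latter is \emph{unbounded} (linear growth in $V_1$ by (\ref{eqV1V2asympt})), so among bounded kernel elements only the translational one survives; the sliding argument rules out any further bounded element. Since the excerpt explicitly attributes Proposition \ref{proBerestNondegen} to \cite{berestycki-wei2012}, the intended proof is surely a reference to that work, with the qualitative decay estimates above as the bridge making the statement self-contained in the bounded category.

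\textbf{Main obstacle.} The delicate point is the bulk step: proving $(\tilde\Phi_1,\tilde\Phi_2)\equiv 0$ once the Gaussian decay at $\pm\infty$ is in hand. The energy/completing-the-square computation is not automatic because the cross term $4V_1V_2\tilde\Phi_1\tilde\Phi_2$ is not sign-definite against the diagonal terms near $x=0$ (where $V_1\approx V_2\approx V_1(0)>0$); this is exactly where one must use that the \emph{full} kernel has dimension two among bounded functions — established by sliding in \cite{berestycki-wei2012} — rather than hoping for a naive coercivity inequality. I would therefore present the decay estimates in detail and invoke \cite{berestycki-wei2012} for the nondegeneracy itself, flagging that the unboundedness of the scaling generator $(xV_1'+V_1,\,xV_2'+V_2)$ is what makes the bounded kernel one-dimensional.
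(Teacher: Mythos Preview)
The paper itself offers no proof of Proposition \ref{proBerestNondegen}: it is quoted from \cite{berestycki-wei2012}, with the single remark that the argument there rests on the monotonicity property (\ref{eqV1V2monot}) and makes no use of the symmetry (\ref{eqV1V2sym}) or of uniqueness. So your ultimate conclusion --- defer to \cite{berestycki-wei2012} --- matches the paper.

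Two points of comparison, however. First, your far-field analysis (Gaussian decay of $\Phi_2$ as $x\to+\infty$ and of $\Phi_1$ as $x\to-\infty$ via the quadratic potential, and the affine-hence-constant asymptotics for the remaining components) is correct; it is the content of Lemma \ref{lemLinear} applied componentwise, and it is exactly what the paper uses later when solving the inhomogeneous problem (\ref{eqLInhomo}). You are also right that the scaling generator $(xV_1'+V_1,xV_2'+V_2)$ is unbounded and hence excluded a priori, and that a naive energy identity does not close because the associated quadratic form has $0$ at the bottom of its spectrum.

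Second, your description of the method in \cite{berestycki-wei2012} is off. In the paper's account, the sliding method is what establishes the \emph{symmetry and uniqueness} of $(V_1,V_2)$ (see the comment after Proposition \ref{proBerestycki}); the nondegeneracy proof is instead a monotonicity/maximum-principle argument. Concretely, the substitution $\Psi_2=-\Phi_2$ turns $L$ into a cooperative system (off-diagonal coefficient $-2V_1V_2<0$), and (\ref{eqV1V2monot}) supplies a strictly positive element $(V_1',-V_2')$ of its kernel. A Perron--Frobenius type argument for cooperative systems then forces any bounded kernel element to be a scalar multiple of it. This is what replaces your energy step, and it is not optional: the obstruction you identify near $x=0$ is genuine and is precisely what the cooperative structure plus positivity of $(V_1',-V_2')$ overcomes.
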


We emphasize that the proof of the above proposition is based on the monotonicity property (\ref{eqV1V2monot}). In particular, no use is made of the symmetry (\ref{eqsymBU}) or the uniqueness property of $(V_1,V_2)$. In fact, the latter properties are considerably harder to establish.

\subsubsection{Construction of the inner approximate
solution}\label{subsubsecvin} Motivated from the above and \cite{berestycki-wei2012}, we
consider the stretched variable
\begin{equation}\label{eqx}
x=\mu \Lambda^\frac{1}{4}z
\end{equation}
with
\begin{equation}\label{eqmuprelim}
\mu=1+\mathcal{O}(\Lambda^{-\frac{1}{2}})\ \textrm{as}\ \Lambda\to
\infty
\end{equation}
to be determined (the last relation is an a-posteriori consequence of matching considerations, see Subsection \ref{subsecMatchCont} below). Then, we seek an inner approximate solution
$(v_{1,in},v_{2,in})$ to (\ref{eqEqGen}) in the form
\begin{equation}\label{eqvinprelim}
v_{i,in}(z)=\mu \Lambda^{-\frac{1}{4}}V_i(x)+\Phi_i(x),\ \ |z|\leq
(\ln \Lambda)\Lambda^{-\frac{1}{4}}, \ \ i=1,2,
\end{equation}
where the functions $\Phi_1,\Phi_2$ are also to be determined. Actually, at first, we were tempted to also exploit the translation invariance of (\ref{eqBUsystem}) by introducing a shift parameter in the stretched variable $x$,
similarly to (\ref{eqsymBU}), but then realized that this is not needed as the  problem (\ref{eqEqGen})-(\ref{eqBdryGen}) itself is translation invariant.
Using (\ref{eqBUsystem}), we find that the remainder which is left
in the first equation of (\ref{eqEqGen}) by this approximation is
\begin{equation}\label{eqRemInBasic}
\begin{array}{c}
  -\mu^2\Lambda^\frac{1}{2}\Phi''_1+\mu^3\Lambda^{-\frac{3}{4}}V_1^3+\Phi_1^3+3\mu^2\Lambda^{-\frac{1}{2}}V_1^2\Phi_1
+3\mu \Lambda^{-\frac{1}{4}}V_1\Phi_1^2+\mu^2 \Lambda^\frac{1}{2}
V_2^2 \Phi_1
 \\
    \\
  +\mu \Lambda^\frac{3}{4}\Phi_2^2V_1+\Lambda\Phi_2^2 \Phi_1+2\mu^2
\Lambda^\frac{1}{2}V_1V_2 \Phi_2+2\mu \Lambda^\frac{3}{4}V_2
\Phi_1 \Phi_2-\mu \Lambda^{-\frac{1}{4}}V_1-\Phi_1,
 \\
\end{array}
\end{equation}
and an analogous relation holds for the second equation. Hence, we
would like for $(\Phi_1,\Phi_2)$ to satisfy
\[
L\left(\begin{array}{c}
  \Phi_1 \\

  \Phi_2 \\
\end{array} \right)=\mu^{-1}\Lambda^{-\frac{3}{4}}\left(\begin{array}{c}
  V_1 \\

  V_2 \\
\end{array} \right),
\]
where the linear operator $L$ is as in (\ref{eqL}), for $x\in
\mathbb{R}$ which is the natural domain of definition for $\Phi_1$
and $\Phi_2$.

In view of the righthand side of the above equation and the asymptotic behaviour
    (\ref{eqV1V2asympt}) (keep in mind (\ref{eqV1V2sym})), we are naturally led to seek $(\Phi_1,\Phi_2)$ as
\[
(\Phi_1,\Phi_2)=\mu^{-1}\Lambda^{-\frac{3}{4}}
( Z_1, Z_2)+(\tilde{\Phi}_1,\tilde{\Phi}_2),
\]
where $Z_1,Z_2$ are some smooth, fixed functions that satisfy
\begin{equation}\label{eqZi}
Z_1(x)=-\psi_0 \frac{x^3}{6}-\kappa \frac{x^2}{2},\ Z_2(x)=0, \
x\geq 1,\ \textrm{and\ (say)}\  Z_1(-x)\equiv Z_2(x).
\end{equation}
Then, the fluctuation $(\tilde{\Phi}_1,\tilde{\Phi}_2)$ should
satisfy
\begin{equation}\label{eqlinearInhomTilde}
L\left(\begin{array}{c}
  \tilde{\Phi}_1 \\

  \tilde{\Phi}_2 \\
\end{array} \right)=\mu^{-1}\Lambda^{-\frac{3}{4}}\left(\begin{array}{c}
  F_1 \\

  F_2 \\
\end{array} \right)
\end{equation}
for some fixed, smooth pair $(F_1,F_2)$ such that \begin{equation}\label{eqsectildf}
\left|F_1(x)\right|+\left|F_2(x)\right|\leq C e^{-cx^2}, \ x \in
\mathbb{R}.
\end{equation}

In the sequel, we will show how the  injectivity result in Proposition \ref{proBerestNondegen} can be used to establish the existence of solutions with \emph{linear growth} to the inhomogeneous linear problem:
\begin{equation}\label{eqLInhomo} L\left(\begin{array}{c}
			\Phi_1 \\
			
			\Phi_2 \\
		\end{array}
		\right)=\left(\begin{array}{c}
			H_1 \\
			
			H_2 \\
		\end{array}
		\right),\ \ x\in \mathbb{R},
	\end{equation}
with $H_1,H_2$ being smooth and decaying exponentially fast.
We remark that it is not clear to us how to use tools from functional analysis to achieve this because, as we expect from the general theory in \cite{hislop}, the continuous spectrum of $L$ (when defined in the natural Hilbert space) should be the  interval $[0,\infty)$ (it was also shown in \cite{berestycki-wei2012} that the whole spectrum of $L$ is nonnegative). Another obstruction is that, even though we are aware of two elements in the (formal) kernel of $L$ (recall (\ref{eqkernel})),
the remaining two elements or their asymptotic behaviour are not known to us (in fact, we suspect that the latter should involve a super-exponential growth which is not useful for matching purposes). Therefore, in contrast to related scalar second order problems (see for example \cite[Lem. 4.1]{grossi2008radial}), it is not clear how to derive conclusions from the corresponding variations of constants formula. Lastly, in relation to the cooperative character of $L$ (see \cite{berestycki2}), let us mention that we have not been able to construct appropriate upper and lower solution pairs to (\ref{eqLInhomo}).

Motivated by the existence proof of \cite{berestycki-wei2012} for the nonlinear problem (\ref{eqBUsystem}), we will first solve (\ref{eqLInhomo}) in large bounded intervals and then obtain the sought solution via a limiting procedure. In this direction, we have the following result.
Let us point out that the following estimates for the problems in the bounded intervals do not only serve as stepping stones to reach this goal but  will  also play a crucial role in the upcoming singular perturbation analysis of (\ref{eqEqGen})-(\ref{eqBdryGen}).

\begin{pro}\label{proNonSymPro}
		Given $\alpha>0$, there exist $M_0,C>0$ such that the boundary value problem
		\begin{equation}\label{eqEqToa}
		L\left( \begin{array}{c}
		\Phi_1 \\
		\Phi_2
		\end{array}
		\right)=\left( \begin{array}{c}
		H_1 \\
		H_2
		\end{array}
		\right),\ \ |x|<M;\ \ \Phi_i\left(\pm M \right)=0,\ i=1,2,
		\end{equation}
		where $L$ is as in (\ref{eqL}) and $H_1,H_2\in$ $C\left[-M,M \right]$, has a unique solution such that
		\begin{equation}\label{eqNonSymApr}
		\sum_{i=1}^{2}\left(\|\Phi'_i\|_{L^\infty\left(-M,M \right)}+\|\Phi_i\|_{L^\infty\left(-M,M \right)}\right)\leq C M\sum_{i=1}^{2}\|e^{\alpha |x|}H_i\|_{L^\infty\left(-M,M \right)},
		\end{equation}
		provided that $M \geq M_0$.

If we further assume one of the following orthogonality conditions:
\begin{equation}\label{eqOrthAssu}
\int_{-M}^{M}\left(V'_1 H_1+V'_2 H_2 \right)dx=0\ \ \textrm{or}\ \
\int_{-M}^{M}\left((xV'_1+V_1) H_1+(xV'_2+V_2) H_2 \right)dx=0,
\end{equation}
we get that
\begin{equation}\label{eq216+}
		\sum_{i=1}^{2}\left(\|\Phi'_i\|_{L^\infty\left(-M,M \right)}+\|\Phi_i\|_{L^\infty\left(-M,M \right)}\right)\leq C \sum_{i=1}^{2}\|e^{\alpha |x|}H_i\|_{L^\infty\left(-M,M \right)}.
		\end{equation}
	\end{pro}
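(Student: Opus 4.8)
The plan is to establish the two a priori estimates first, by a compactness–contradiction argument, and then deduce existence and uniqueness from the linear Fredholm theory on the bounded interval $(-M,M)$. Since $L$ is a self-adjoint, cooperative second-order system with smooth coefficients, for each fixed $M$ the operator $L$ with Dirichlet conditions at $\pm M$ is Fredholm of index zero, so uniqueness of the solution is equivalent to existence; thus it suffices to prove the a priori bounds and use them to rule out nontrivial kernel elements. For the estimate \eqref{eqNonSymApr}, suppose it fails: there exist $M_n\to\infty$, data $(H_1^n,H_2^n)$ with $\sum_i\|e^{\alpha|x|}H_i^n\|_{L^\infty(-M_n,M_n)}=1$ (after normalization), and solutions $(\Phi_1^n,\Phi_2^n)$ of \eqref{eqEqToa} with $\sum_i(\|(\Phi_i^n)'\|_\infty+\|\Phi_i^n\|_\infty)=K_n$ where $K_n/M_n\to\infty$. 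Rescaling $\Psi_i^n=\Phi_i^n/K_n$ gives solutions with $\sum_i(\|(\Psi_i^n)'\|_\infty+\|\Psi_i^n\|_\infty)=1$ of $L\Psi^n=K_n^{-1}H^n$, and the right-hand side tends to zero in $L^\infty_{loc}$ with the exponential weight. Interior elliptic (here ODE) estimates, together with the exponential decay of $V_2$ and of $V_1-\psi_0 x-\kappa$ from \eqref{eqV1V2asympt}, give local $C^2$ bounds, so along a subsequence $\Psi^n\to\Psi$ in $C^1_{loc}(\mathbb{R})$ with $L\Psi=0$ and $\Psi\in L^\infty(\mathbb{R})$. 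By Proposition \ref{proBerestNondegen}, $\Psi=\lambda(V_1',V_2')$. The key point is that $\lambda=0$: one shows this by testing the equation for $\Psi^n$ against the second (growing) kernel element $(xV_1'+V_1,xV_2'+V_2)$ over $(-M_n,M_n)$ and integrating by parts twice; the boundary terms are controlled because that kernel element grows only linearly while $\Psi^n$ and its derivative are uniformly bounded, and the interior term $\int K_n^{-1}(H^n_1(xV_1'+V_1)+H^n_2(xV_2'+V_2))$ is $O(K_n^{-1}M_n)$. A parallel contradiction argument shows that $K_n/M_n$ cannot stay bounded away from zero either unless $\Psi\not\equiv0$, but $\Psi=\lambda(V_1',V_2')$ with $\lambda=0$ contradicts $\sum_i(\|(\Psi_i)'\|_\infty+\|\Psi_i\|_\infty)=1$ — here one must be careful to verify no mass escapes to infinity, which is where the weight $e^{\alpha|x|}$ on the data and the exponential decay of $V_2'$ and $V_1'-\psi_0$ enter, forcing $\Psi_i^n$ to be small for $|x|$ large uniformly in $n$. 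Combining, the normalization cannot hold, giving \eqref{eqNonSymApr}.

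For the improved estimate \eqref{eq216+} under one of the orthogonality conditions \eqref{eqOrthAssu}, I would again argue by contradiction: assume sequences with $\sum_i\|e^{\alpha|x|}H_i^n\|_\infty=1$ but $\sum_i(\|(\Phi_i^n)'\|_\infty+\|\Phi_i^n\|_\infty)=:K_n\to\infty$, now together with $K_n$ possibly $O(M_n)$ or smaller. Normalizing by $K_n$ and passing to the limit exactly as above yields $\Psi=\lambda(V_1',V_2')\in L^\infty$. The orthogonality hypothesis is preserved in the limit: dividing \eqref{eqOrthAssu} by $K_n$ and letting $n\to\infty$, using the exponential weight to pass the integrals to all of $\mathbb{R}$, we obtain either $\int_{\mathbb{R}}(V_1'\cdot 0 + V_2'\cdot 0)=0$ trivially (if the first condition was used, no information) or — this is the decisive case — we must instead keep track of the \emph{rescaled} orthogonality of $\Phi^n$ itself against the relevant kernel element and show that it forces the limit coefficient $\lambda$ to vanish. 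Concretely, if the first condition in \eqref{eqOrthAssu} holds, one pairs the equation with $(xV_1'+V_1,xV_2'+V_2)$ and, after integrating by parts, the boundary terms now vanish to higher order because the Dirichlet condition $\Phi_i^n(\pm M_n)=0$ kills the undifferentiated contributions and the surviving term is $\int_{-M_n}^{M_n}(V_1'H_1^n+V_2'H_2^n)$, which is zero by hypothesis; what remains after the limit is an algebraic identity forcing $\lambda=0$. Symmetrically, if the second condition holds, one pairs with $(V_1',V_2')$. Once $\lambda=0$ and no mass escapes, $\Psi\equiv0$, contradicting the normalization; hence $K_n$ stays bounded, i.e. \eqref{eq216+}.

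Finally, existence and uniqueness: for each $M\ge M_0$, by the Fredholm alternative for the Dirichlet problem it suffices to observe that \eqref{eqNonSymApr} applied with $H_1=H_2=0$ forces any homogeneous solution to be identically zero, so $L$ with Dirichlet data at $\pm M$ is invertible; the solution of \eqref{eqEqToa} then exists, is unique, and satisfies \eqref{eqNonSymApr}; under \eqref{eqOrthAssu} it additionally satisfies \eqref{eq216+}. The $C^1$ bound is included throughout by the standard interpolation inequality
\begin{equation}\label{eqInterpoNew}
\|\Phi_i'\|_{L^\infty(-M,M)}\le C\Big(\|\Phi_i\|_{L^\infty(-M,M)}+\|\Phi_i''\|_{L^\infty(-M,M)}\Big)\le C\Big(\|\Phi_i\|_{L^\infty(-M,M)}+\sum_j\|e^{\alpha|x|}H_j\|_{L^\infty(-M,M)}\Big),
\end{equation}
using the equation to bound $\Phi_i''$ pointwise. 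The main obstacle is the contradiction argument's ``no escape of mass'' step — ensuring that in the blow-down limit the normalized solutions do not concentrate their $L^\infty$-norm near $|x|=M_n$; this is exactly where the exponentially weighted norm on the data and the exponential decay rates in \eqref{eqV1V2asympt} must be leveraged, via a barrier/comparison argument on the tails (using the cooperative structure of $L$ and that $V_2,V_1'-\psi_0$ decay) to show $|\Phi_i^n(x)|+|(\Phi_i^n)'(x)|$ is uniformly small for $R\le|x|\le M_n$ once $R$ is large, independently of $n$.
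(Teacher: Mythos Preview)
Your high-level strategy --- reduce to an a priori estimate by self-adjointness, argue by contradiction, pass to a $C^1_{loc}$ limit and invoke Proposition~\ref{proBerestNondegen}, then test against kernel elements --- matches the paper's. But the core step, showing $\lambda=0$ and ruling out escape of mass, does not close as you describe it. Barriers only control $\Psi_1^n$ on $\{x<-R\}$ (where $V_2^2\sim\psi_0^2x^2$ is large) and $\Psi_2^n$ on $\{x>R\}$; on the complementary half-lines the coefficients of $L$ decay super-exponentially, so on $[R,M_n]$ the function $\Psi_1^n$ is essentially affine with $\Psi_1^n(M_n)=0$, and nothing prevents its $L^\infty$ norm from being carried there. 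Your testing step does not resolve this: integrating $L\Psi^n$ against $E=(xV_1'+V_1,xV_2'+V_2)$ and using $\Psi_i^n(\pm M_n)=0$ gives
\[
-\sum_i\big[(\Psi_i^n)'E_i\big]_{-M_n}^{M_n}=K_n^{-1}\!\int H^n\cdot E,
\]
and since $E_1(M_n)\sim 2\psi_0 M_n$ while $(\Psi_1^n)'(M_n)$ is a priori only $O(1)$, this is merely a relation between boundary derivatives, not a statement about $\lambda$. The paper closes the gap by introducing a second, \emph{blow-down} scale $y=x/M_n$: in that variable the limits are genuinely affine, $\varphi_{1,\infty}=a_1(y-1)$ on $(0,1]$ and $\varphi_{2,\infty}=a_2(y+1)$ on $[-1,0)$, the boundary derivatives converge to $a_1,a_2$, and testing yields the reflection laws $a_1+a_2=0$ and $a_1=a_2$. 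A separate matching at the intermediate points $y=\pm K/M_n$ links the two scales via $-a_1=\psi_0\lambda$ and $a_2=-\psi_0\lambda$, and only then do all constants vanish and the barrier-plus-affine structure exclude escape. This two-scale matching is the missing ingredient.

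Two smaller points. In the orthogonality case your pairing is crossed: testing with $E$ produces $\int E\cdot H$ on the right, not $\int V'\cdot H$. The actual mechanism is that in the blow-down testing identity \eqref{eqIntOrthg} (or its analogue with $E$) the relevant orthogonality in \eqref{eqOrthAssu} makes the right-hand side \emph{exactly} zero rather than only $o(1)$ after an extra factor $M_n$, which is precisely why the factor $M$ drops from the estimate. Finally, your interpolation for $\Phi_i'$ as written would pick up an extra power of $M$ from $\|V_j^2\Phi_i\|_\infty$ (recall $V_j$ grows linearly on one side); one must first feed in the one-sided barrier decay of $\Phi_i$ so that $V_j^2\Phi_i$ remains bounded before invoking \eqref{eqInterpoNew}.
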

	\begin{proof}
Since the linear operator $L$ is self-adjoint (in the natural
Sobolev spaces associated to the boundary value problem), we only need to verify the validity of the asserted a-priori estimates. An important role  will be played by the 'blow-down' problem:
		\begin{equation}\label{eqNonSymBD}
		\left\{
		\begin{array}{l}
		-\frac{d^2\varphi_1}{dy^2}+ M^2 V_2^2\left(My \right)\varphi_1+ 2M^2 V_1V_2 \left(My \right)\varphi_2= M^2 h_1, \\
		\\
		-\frac{d^2\varphi_2}{dy^2}+ M^2 V_1^2\left(My \right)\varphi_2+ 2M^2 V_1V_2 \left(My \right)\varphi_1= M^2 h_2,\\
		\\
		\textrm{for}\  |y|<1;\ \ \varphi_i(\pm 1)=0,\ \ i=1,2,
		\end{array}
		\right.
		\end{equation}
		where \[\varphi_i(y)=\Phi_i\left(My \right)  \ \ \textrm{and}  \ \ h_i(y)=H_i\left(My \right),
		\ \ i=1,2.\]

Let us start by establishing the a-priori estimate
\begin{equation}\label{eqapToa}
		\sum_{i=1}^{2}\|\Phi_i\|_{L^\infty\left(-M,M \right)}\leq C M\sum_{i=1}^{2}\|e^{\alpha |x|}H_i\|_{L^\infty\left(-M,M \right)}.
		\end{equation}
Suppose, to the contrary, that the above a-priori estimate  were false. Then, there would exist $M_n \to \infty$ and pairs
		$(\varphi_{1,n},\varphi_{2,n})\in C^2[-1,1]\times C^2[-1,1]$, $(h_{1,n},h_{2,n})\in C[-1,1]\times C[-1,1]$, satisfying (\ref{eqNonSymBD}) with $M=M_n$, which violate it. In fact, there is no loss of generality in assuming that $\|\varphi_{1,n}\|_{L^\infty(-1,1)}\geq \|\varphi_{2,n}\|_{L^\infty(-1,1)}$. Dividing both equations by $\|\varphi_{1,n}\|_{L^\infty(-1,1)}$, we may further assume that
		\begin{equation}\label{eqNonSymContra}
		\begin{array}{l}
		\|\varphi_{1,n}\|_{L^\infty(-1,1)}=1, \ \ \|\varphi_{2,n}\|_{L^\infty(-1,1)}\leq 1 \\
		\\
		\textrm{and}\ \ M_n\sum_{i=1}^{2}\|e^{\alpha \left|M_ny\right|}h_{i,n}\|_{L^\infty(-1,1)}\to 0.
		\end{array}
		\end{equation}
		Throughout the rest of the proof, $c \setminus C$ will stand for small$\setminus$large positive generic constants that are independent of $n$.
		A standard barrier argument yields that
		\begin{equation}\label{eqexpbarb}
		\left|\varphi_{1,n}(y) \right|\leq e^{cM_ny},\ \ -1\leq y \leq 0;\ \ \left|\varphi_{2,n}(y) \right|\leq e^{-cM_ny},\ \ 0\leq y \leq 1.
		\end{equation}
		In view of (\ref{eqNonSymBD}), (\ref{eqNonSymContra}) and the above relation, by a standard diagonal-compactness argument,
		passing to a subsequence if necessary, we find that
		\begin{equation}\label{eqNonSymLoc}
		\varphi_{i,n}\to \varphi_{i,\infty}\ \ \textrm{in}\ \ C_{loc}^2\left([-1,1]\setminus \{0 \}\right),\ \ i=1,2,
		\end{equation}
		where the limiting functions satisfy
		\[
		\varphi_{1,\infty}(y)=0,\ y\in [-1,0);\ \ \frac{d^2\varphi_{1,\infty}}{dy^2}=0,\ y\in (0,1],
		\]
		and
		\[
		\frac{d^2\varphi_{2,\infty}}{dy^2}=0,\ y\in [-1,0);\ \ \varphi_{2,\infty}(y)=0,\ y\in (0,1].
		\]
		Hence, we get that
		\begin{equation}\label{eqNonSymInf}
		\varphi_{1,\infty}(y)=a_1(y-1),\ \ y\in (0,1],\ \ \textrm{and}\ \ \varphi_{2,\infty}(y)=a_2(y+1),\ \ y\in [-1,0).
		\end{equation}

We will next show that the convergence in (\ref{eqNonSymLoc}) can be strengthened to
\begin{equation}\label{eqpasen}
\left|\varphi_{i,n}(y)- \varphi_{i,\infty}(y) \right|\leq Ce^{-cM_n |y|}+o(1),
\end{equation}
uniformly for $(-1)^{i+1}y \in (0,1]$, as $n\to \infty$, $i=1,2$ (keep in mind (\ref{eqexpbarb}) for the remaining intervals).
To this end, let us consider the difference
\[
\psi_{i,n}=\varphi_{i,n}- \varphi_{i,\infty}\ \ \textrm{for}\ \ y \in (0,1].
\]
Then, in view of (\ref{eqV1V2asympt}), (\ref{eqNonSymBD}) and (\ref{eqNonSymContra}),
we find that
\begin{equation}\label{eqcomag}
\left|\frac{d^2\psi_{i,n}}{dy^2} \right|=\left|\frac{d^2\varphi_{i,n}}{dy^2} \right|\leq CM_n^2e^{-cM_ny},
\ \ y \in (0,1].
\end{equation}
Note that we only made mild use of the last assumption in (\ref{eqNonSymContra}) at this point
(in particular, there was no need here for the term $M_n$ in front of the sum).
In turn, integrating twice the above relation, and making use of (\ref{eqNonSymLoc})
only at  $y=1$, yields estimate (\ref{eqpasen}) for $i=1$.
The case $i=2$ is completely analogous. Observe that (\ref{eqpasen})
provides useful information only for $|y|\gg M_n^{-1}$.

Actually, there are certain 'reflection laws' that have to be satisfied by $\varphi_{1,\infty}$ and $\varphi_{2,\infty}$  at $y=0$.
Indeed, by testing (\ref{eqNonSymBD}) with $\left(V'_1\left(M_ny \right),V'_2\left(M_ny \right) \right)$
and integrating by parts, we arrive at
\begin{equation}\label{eqIntOrthg}
\begin{split}
\sum_{i=1}^{2}\left[\frac{d\varphi_{i,n}(1)}{dy}V'_i\left(M_n \right)-\frac{d\varphi_{i,n}(-1)}{dy}V'_i\left(-M_n \right)\right]=\ \ \ \ \ \   \ \ \ \ \ \ \ \ \ \ \ \ \ \ \ \ \ \ \ \ \ \ \ \ \ \ & \\
\ \ \ \ \ \ \ \ \ \ \ \ \ \ \  \ \ \ \ \ \ \ \ \ \ \ \ \ \ \ \ \ \ \ \ \ \ \ \ \ \ -M_n^2\sum_{i=1}^{2}\int_{-1}^{1}V'_i\left(M_ny \right)h_{i,n}(y)dy.
\end{split}
\end{equation}
Letting $n\to \infty$ in the above relation, and using (\ref{eqNonSymContra}), (\ref{eqNonSymLoc}), (\ref{eqNonSymInf}), we deduce that
\begin{equation}\label{eqa1a2S}
a_1+a_2=0.
\end{equation}
Likewise, testing by $\left(M_nyV'_1\left(M_ny \right)+{V}_1\left(M_ny \right),M_nyV'_2\left(M_ny \right)+{V}_2\left(M_ny \right) \right)$ yields
\begin{equation}\label{eqa1a2D}
a_1=a_2.
\end{equation}
Hence, we get that
\begin{equation}\label{eqigno}
a_1=a_2=0.
\end{equation}
A-posteriori, it turns out that only one of the relations (\ref{eqa1a2S}), (\ref{eqa1a2D}) will be needed to reach our eventual goal (see (\ref{eqNonSymDes}) below). So, with the next assertion of the proposition in our mind, let us ignore (\ref{eqigno}) and, say, (\ref{eqa1a2D}).

On the other side, again
by the standard  diagonal-compactness
argument, passing to a further subsequence if necessary, we find that
\[
\Phi_{i,n}\to \Phi_{i,\infty}\ \ \textrm{in}\ \ C^2_{loc}(\mathbb{R}) \
\ \textrm{as}\ n\to \infty,\ \ i=1,2,
\]
where $\Phi_{1,\infty},\Phi_{2,\infty}$ satisfy
\begin{equation}\label{eqcomag22}
L\left(\begin{array}{c}
  \Phi_{1,\infty} \\
  \Phi_{2,\infty} \\
\end{array} \right)=\left(\begin{array}{c}
  0 \\
  0 \\
\end{array}\right),\ \ x\in \mathbb{R},\
\ \textrm{and}\ \
\sum_{i=1}^{2}\|\Phi_{i,\infty}\|_{L^\infty(\mathbb{R})}\leq 2.\end{equation}	Note that again we did not use the full strength of the last assumption in (\ref{eqNonSymContra}) (this argument goes through without the factor $M_n$ in front of the sum).	Thus, by virtue of Proposition \ref{proBerestNondegen},  we infer that
		\begin{equation}\label{eqbreg}
		\varphi_{i,n}\left((\ln \Lambda_n)^{-1}x \right)\to b V'_i(x)\ \ \textrm{in}\ \ C_{loc}^1(\mathbb{R}),\ \ i=1,2,
		\end{equation}
		for some $b\in \mathbb{R}$.
		
		In light of (\ref{eqexpbarb}), (\ref{eqpasen}) and (\ref{eqbreg}), to reach a contradiction, it is enough to show that
		\begin{equation}\label{eqNonSymDes}
		a_1=a_2=b=0.
		\end{equation}
		For this purpose, we will exploit that (\ref{eqpasen}) and (\ref{eqbreg}) should match where their domains of effectiveness overlap. More precisely, we will focus our attention to the points $\pm K M_n^{-1}$ in the latter intermediate zone, where $K>0$ is any sufficiently large positive number (independent of $n$). On the one hand, relations (\ref{eqNonSymInf})
		and (\ref{eqpasen}) give us that
\[
\varphi_{1,n}(K M_n^{-1})=-a_1+\mathcal{O}\left(e^{-cK}\right)+o(1)\ \ \textrm{as}\ \ n\to \infty.
\]		
On the other hand, we obtain from (\ref{eqbreg}) that		
		\[
		\varphi_{1,n}(K M_n^{-1})=bV'_1(K)+o(1)\ \ \textrm{as}\ \ n\to \infty.
		\]		
	Equating the righthand sides of the above two relations, then letting $n\to \infty$ and subsequently $K\to \infty$
	in the resulting identity, yields that
	\begin{equation}\label{eqa1b}
	-a_1=\psi_0 b.
	\end{equation}	
	In the same manner we can see that
	\begin{equation}\label{eqa2b}
		a_2=-\psi_0 b.
	\end{equation}

		The desired relation (\ref{eqNonSymDes}) now follows at once from (\ref{eqa1a2S}), (\ref{eqa1b}) and (\ref{eqa2b}).
The proof of the a-priori estimate (\ref{eqapToa}) is therefore complete.

We are now in position to establish the validity of the full a-priori estimate (\ref{eqEqToa}). In view of (\ref{eqEqToa}), (\ref{eqapToa}), (\ref{eqexpbarb}), and the asymptotic behaviour of $V_1$, $V_2$,  we find that
\[
\sum_{i=1}^{2}\|\Phi ''_i\|_{L^\infty(-M,M)}\leq CM\sum_{i=1}^{2}\|e^{\alpha |x|}H_i\|_{L^\infty(-M,M)}.
\]
The desired estimate  follows at once by plainly interpolating between (\ref{eqapToa})
and the above estimate, for example using the elementary
 inequality
\begin{equation}\label{eqInterpoNew}
\|\Phi'\|_{L^\infty(-M,M)}\leq 2
\|\Phi\|_{L^\infty(-M,M)}+\|\Phi ''\|_{L^\infty(-M,M)},
\end{equation}
(which holds for $M\geq 1$).

The proof of the second assertion is completely analogous (recall the comments below (\ref{eqcomag})
and (\ref{eqcomag22})). The only essential difference is in (\ref{eqIntOrthg}) or the corresponding
relation that gives (\ref{eqa1a2D}), where now the respective orthogonality condition in (\ref{eqOrthAssu})
implies that the righthand side is zero.
	\end{proof}

\begin{rem}
	An examination of the above proof reveals that the righthand side of (\ref{eqNonSymApr}) may also be replaced by
	\[
	C  \sum_{i=1}^{2}\left\{\|e^{\alpha |x|}H_i\|_{L^\infty\left(-M,M \right)}+M \|H_i\|_{L^1\left(-M,M \right)}\right\}.
	\]
\end{rem}

\begin{rem}
	\label{remtestz}
	It is worth noting that in the proof of the above proposition we could have also tested (\ref{eqNonSymBD}) plainly by $(y,y)$ and, using (\ref{eqexpbarb}), (\ref{eqNonSymLoc}), (\ref{eqNonSymInf}), (\ref{eqbreg}) together with Lebesgue's dominated convergence theorem, arrive at the relation
	\[a_1+a_2+\psi_0 b=0.\]
\end{rem}

\begin{rem}
In contrast, the operator $L$ with Neumann boundary conditions at $\pm M$ becomes nearly non-invertible as $M \to \infty$ because $(V'_1,V'_2)$ satisfies these conditions up to an $\mathcal{O}(e^{-M})$ small error. Nevertheless, the a-priori estimate (\ref{eq216+}) still holds, provided that we restrict ourselves within the mirror symmetric class (\ref{eqV1V2sym}).
\end{rem}

The following simple result will prove extremely useful in the sequel.

\begin{lem}\label{lemLinear}
Suppose that $u,q$ are smooth and satisfy
\[-{u''}+q(x)u=\mathcal{O}(e^{-c_0x})\ \ \textrm{as}\ \ x\to +\infty, \]
for some constant $c_0>0$. Then, the following properties hold.
\begin{itemize}
  \item $\liminf_{x\to +\infty}q(x)=+\infty$ and $u$ has at most algebraic growth \\ $\implies$ $u=\mathcal{O}(e^{-c_0x})$ as $x\to +\infty$,
  \item $q=\mathcal{O}(e^{-c_0x})$ as $x\to +\infty$\\ $\implies$ $u=a_1+b_1x+\mathcal{O}(e^{-c_1x})$ as $x\to +\infty$ for some $a_1,b_1\in \mathbb{R}$ and any $c_1\in (0,c_0)$.
\end{itemize}
\end{lem}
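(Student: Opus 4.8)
The plan is to prove each of the two implications of Lemma \ref{lemLinear} by elementary ODE comparison and integration arguments, keeping the statements ``as $x\to+\infty$'' so that only the behaviour on a half-line $[x_0,\infty)$ matters. Write $-u''+q(x)u=g(x)$ with $|g(x)|\le Ce^{-c_0x}$ for $x\ge x_0$.

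For the first bullet, assume $q(x)\to+\infty$ and $u$ has at most algebraic growth. First I would fix $x_0$ so large that $q(x)\ge c_0^2+1$ (say) for $x\ge x_0$; this is possible by the hypothesis $\liminf q=+\infty$. The idea is a barrier/maximum principle argument: for a suitable constant $A$, the functions $w_\pm(x)=\pm\bigl(Ax^N e^{-x}\mp B e^{-c_0 x}\bigr)$ — or more simply $w(x)=A e^{-\sigma x}$ with $\sigma<\min\{c_0,1\}$ together with a term absorbing the algebraic growth of $u$ at the left endpoint $x_0$ — can be arranged to satisfy $-w''+q w\ge |g|$ on $[x_0,\infty)$ and $w(x_0)\ge |u(x_0)|$, $w(\infty)=0$. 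Since $q>0$, the operator $-\frac{d^2}{dx^2}+q$ satisfies the maximum principle on $[x_0,\infty)$ (a bounded subsolution of $-v''+qv\le 0$ attains its max at the boundary; here the relevant comparison is with $u$ having at most algebraic growth, so one first truncates on $[x_0,R]$, applies the maximum principle, and lets $R\to\infty$, using $q\ge 1$ to kill the contribution of $u$ at $x=R$ via a term like $e^{-\sqrt{q}\,(R-x)}$ which beats any polynomial). This yields $|u(x)|\le w(x)\le C e^{-c_0 x}$ after possibly shrinking the rate; a standard bootstrap (or just re-reading the equation, $|u''|\le |q u|+|g|$ with $q$ of at most polynomial growth — which one may assume, or avoid by working with the rate $\sigma$ and then noting $g$ forces the rate back up to $c_0$ by variation of parameters as in the second bullet) recovers the full rate $c_0$.

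For the second bullet, assume $q=\mathcal O(e^{-c_0x})$. Rewrite the equation as $u''=q(x)u-g(x)=:f(x)$. The strategy is to show $f$ is integrable near $+\infty$, hence $u'$ has a limit, hence $u$ is eventually close to an affine function, and then to quantify the error. Concretely: from the equation and a crude a priori bound (any solution here has at most linear growth — prove this first by Gronwall on the integral equation $u(x)=u(x_0)+u'(x_0)(x-x_0)+\int_{x_0}^x (x-s) f(s)\,ds$, using $|q(s)|\le Ce^{-c_0 s}$ so that the kernel $\int (x-s)e^{-c_0 s}|u(s)|\,ds$ is controlled), one gets $|u(x)|\le C(1+x)$. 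Then $|f(x)|\le Ce^{-c_0x}(1+x)+Ce^{-c_0x}\le C'e^{-c_1 x}$ for any $c_1<c_0$, so $f\in L^1[x_0,\infty)$ and in fact $\int_x^\infty |f|\,ds=\mathcal O(e^{-c_1x})$. Set $b_1=u'(x_0)+\int_{x_0}^\infty f(s)\,ds$; then $u'(x)=b_1-\int_x^\infty f(s)\,ds=b_1+\mathcal O(e^{-c_1x})$, and integrating once more, $u(x)=b_1 x+a_1+\mathcal O(e^{-c_1 x})$ with $a_1$ defined by the convergent integral $a_1=u(x_0)-b_1x_0-\int_{x_0}^\infty\!\int_x^\infty f(s)\,ds\,dx$. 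This is exactly the claimed expansion.

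I expect the main obstacle to be the first bullet, specifically handling the phrase ``at most algebraic growth'' rigorously: one must justify discarding the boundary contribution at $x=R$ as $R\to\infty$ in the truncated maximum principle, which requires producing an explicit positive supersolution of $-v''+qv\ge0$ that dominates $u$ at $x_0$ and decays faster than any polynomial at $+\infty$ — the natural candidate being built from $e^{-\int_{x_0}^x\sqrt{q(t)}\,dt}$, whose analysis needs $q$ bounded below by a positive constant (which we arranged) but only mild regularity of $q$. Once that comparison function is in hand the rest is routine; the second bullet is entirely a matter of two integrations and Gronwall, with the only mild care being the a priori linear bound needed before the decay of $f$ can be invoked. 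The logarithmic factors and the open interval $c_1\in(0,c_0)$ appearing in the statement are precisely the slack absorbed by the polynomial $\times\,e^{-c_0x}\le e^{-c_1x}$ estimates above, so no sharper bookkeeping is needed.
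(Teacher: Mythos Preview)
Your argument is essentially correct. The paper itself gives no proof of this lemma at all, only citing \cite[Lem.~7.3]{del2010toda} for the first assertion and \cite[Lem.~3.2]{agudelo} for the second, so your treatment is in fact more detailed than what appears there.

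A couple of minor remarks on the first bullet. The exposition is a bit tangled: the suggested barriers $w_\pm=\pm(Ax^N e^{-x}\mp Be^{-c_0x})$ are not what you end up using, and the detour through a rate $\sigma<c_0$ followed by a bootstrap is unnecessary. Once $x_0$ is chosen so that $q(x)\ge c_0^2+1$ for $x\ge x_0$, the function $w(x)=Ae^{-c_0x}$ already satisfies $-w''+qw\ge Ae^{-c_0x}\ge|g|$ for $A$ large, so the full rate $c_0$ comes out directly. The crucial step you identify --- adding on $[x_0,R]$ a corrector $\eta_R(x)=|u(R)|\,e^{-(R-x)}$ (a supersolution since $q\ge1$) to absorb the unknown boundary value and then sending $R\to\infty$, using that $|u(R)|\,e^{-R}\to0$ by the algebraic-growth hypothesis --- is exactly right and is the only nontrivial point. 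No assumption on the growth of $q$ from above is needed anywhere. For the second bullet your two-integration argument is clean; the a~priori linear bound on $u$ does follow from the integral equation as you say, once $x_0$ is taken large enough that $C\int_{x_0}^\infty(1+s)e^{-c_0s}\,ds<1/2$, after which a $\sup$ argument closes.
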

\begin{proof}
The first property can be shown as in \cite[Lem. 7.3]{del2010toda}, while the second as in \cite[Lem. 3.2]{agudelo}.
\end{proof}
An important consequence of Proposition \ref{proNonSymPro} and Lemma \ref{lemLinear} is the next lemma, which may be
considered as a sort of \emph{Fredholm alternative} for $L$.
\begin{lem}\label{lemFred}
Assume that the components of $(H_1,H_2)\in  \left[C(\mathbb{R})\right]^2$ satisfy
\begin{equation}\label{eqExp}
|H_i(x)|\leq Ce^{-c|x|},\ \ x\in \mathbb{R},\ i=1,2,
\end{equation}
for some constants $c,C>0$, and one of the orthogonality conditions
\begin{equation}\label{eqOrthogV}
\int_{-\infty}^{\infty}\left(V'_1H_1+V'_2H_2 \right)dx=0\ \ \textrm{or}\ \ \int_{-\infty}^{\infty}\left((xV'_1+{V}_1)H_1+(xV'_2+{V}_2)H_2 \right)dx=0.
\end{equation}
Then, there exists a  solution $(\Phi_1,\Phi_2)\in \left[C^2(\mathbb{R})\right]^2$ to (\ref{eqLInhomo}) such that
\begin{equation}\label{eqRuns}
\begin{array}{lll}
  \Phi_1(x)=a_++\mathcal{O}(e^{-c'x}),&\Phi_2(x)=\mathcal{O}(e^{-c'x})& \textrm{as}\ \ x\to +\infty; \\
    & & \\
  \Phi_1(x)=\mathcal{O}(e^{c'x}),& \Phi_2(x)=a_-+\mathcal{O}(e^{c'x})& \textrm{as}\ \ x\to -\infty.
\end{array}
\end{equation}
for some $a_\pm \in \mathbb{R}$ such that
\begin{equation}\label{eqa+a-}
a_++a_-=-\frac{1}{2\psi_0}\int_{-\infty}^{\infty}\left((xV'_1+{V}_1)H_1+(xV'_2+{V}_2)H_2 \right)dx
\end{equation}
and for any $c' \in (0,c)$.
\end{lem}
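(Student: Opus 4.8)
The plan is to realize $(\Phi_1,\Phi_2)$ as a limit of solutions to the truncated problems \eqref{eqEqToa} on $(-M,M)$, whose existence and uniform bounds are supplied by Proposition \ref{proNonSymPro}. First I would fix a large $M$, let $(\Phi_1^M,\Phi_2^M)$ denote the unique solution of \eqref{eqEqToa} with right-hand side $(H_1,H_2)|_{(-M,M)}$, and observe that exactly one of the two orthogonality conditions in \eqref{eqOrthAssu} holds \emph{in the limit} (the one corresponding to the assumed condition in \eqref{eqOrthogV}); up to replacing $(H_1,H_2)$ by $(H_1,H_2)$ minus a transcendentally small correction supported near $\pm M$, or simply absorbing the $\mathcal{O}(e^{-cM})$ discrepancy into the error, we may invoke the sharp estimate \eqref{eq216+}. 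This gives $\sum_i(\|(\Phi_i^M)'\|_{L^\infty}+\|\Phi_i^M\|_{L^\infty})\le C\sum_i\|e^{\alpha|x|}H_i\|_{L^\infty(\mathbb{R})}$ with $C$ independent of $M$. By the diagonal–compactness argument (exactly as used twice in the proof of Proposition \ref{proNonSymPro}, via the equation and interpolation to control $(\Phi_i^M)''$), a subsequence converges in $C^2_{loc}(\mathbb{R})$ to a bounded solution $(\Phi_1,\Phi_2)$ of \eqref{eqLInhomo}, and the bound on $\|\Phi_i\|_{L^\infty(\mathbb{R})}$ passes to the limit.

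Next I would pin down the asymptotic behaviour \eqref{eqRuns}. Since $(\Phi_1,\Phi_2)$ is bounded, I apply Lemma \ref{lemLinear} separately to each component. As $x\to+\infty$, the coefficient of $\Phi_1$ in the first equation of \eqref{eqLInhomo} is $V_2^2=\mathcal{O}(e^{-cx^2})$ by \eqref{eqV1V2asympt}, so the second bullet of Lemma \ref{lemLinear} applies after moving the exponentially small terms $2V_1V_2\Phi_2$ (note $V_1$ grows only linearly while $V_2$ and $\Phi_2$ decay) and $H_1$ to the right-hand side; boundedness forces the linear coefficient $b_1$ to vanish, giving $\Phi_1=a_++\mathcal{O}(e^{-c'x})$. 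For $\Phi_2$ as $x\to+\infty$, the coefficient $V_1^2$ satisfies $\liminf_{x\to+\infty}V_1^2=+\infty$, so the first bullet of Lemma \ref{lemLinear} — the right-hand side being $\mathcal{O}(e^{-cx})$ once we note $2V_1V_2\Phi_1 = \mathcal{O}(e^{-cx})$ — yields $\Phi_2=\mathcal{O}(e^{-c'x})$. The behaviour as $x\to-\infty$ is symmetric, giving $\Phi_1=\mathcal{O}(e^{c'x})$, $\Phi_2=a_-+\mathcal{O}(e^{c'x})$.

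Finally, the identity \eqref{eqa+a-} relating $a_++a_-$ to the second integral in \eqref{eqOrthogV} is the analogue, at the level of the limiting problem, of the "reflection law" \eqref{eqa1a2D}--\eqref{eqa1b}--\eqref{eqa2b} derived in the truncated setting (compare also Remark \ref{remtestz}). To obtain it directly I would test \eqref{eqLInhomo} against the kernel element $(xV_1'+V_1,\,xV_2'+V_2)$ over $(-R,R)$ and integrate by parts twice: the left-hand side produces boundary terms at $\pm R$ which, using the asymptotics $V_1(x)\sim\psi_0 x$, $V_1'(x)\to\psi_0$ (and their exponential decay on the other side, together with the decay of $V_2$), and the expansions $\Phi_1\to a_+$, $\Phi_2\to a_-$ just established, converge as $R\to+\infty$ to $-2\psi_0(a_++a_-)$; the right-hand side converges to $\int_{-\infty}^\infty((xV_1'+V_1)H_1+(xV_2'+V_2)H_2)\,dx$, which is finite by \eqref{eqExp}. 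Rearranging gives \eqref{eqa+a-}. The main obstacle is the bookkeeping in this last step — one must check that every boundary term that is \emph{not} of the form $(\Phi_i')(xV_i'+V_i)$ or $\Phi_i(xV_i''+2V_i')$ either vanishes in the limit or cancels using the orthogonality of $(V_1',V_2')$ with the equation (i.e. \eqref{eqkernel}), and that the linearly growing factors $xV_i'+V_i$ are tamed by the exponential decay of $\Phi_i$ on the side where $V_i$ itself is not decaying — but this is precisely the computation already carried out in the bounded-interval analysis, so no genuinely new difficulty arises.
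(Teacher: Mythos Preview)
Your approach is correct and essentially identical to the paper's: truncate to $(-M,M)$, invoke the sharp estimate \eqref{eq216+} of Proposition~\ref{proNonSymPro} to get $M$-uniform bounds, pass to the limit by diagonal compactness, read off the asymptotics from Lemma~\ref{lemLinear}, and derive \eqref{eqa+a-} by testing against $(xV_1'+V_1,xV_2'+V_2)$.

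The only point where the paper is more explicit is the correction needed to force exact orthogonality on $(-M,M)$. Rather than a correction ``supported near $\pm M$'' as you suggest, the paper subtracts $d_n(V_1',V_2')e^{-c|x|}$ with $d_n$ chosen so that $\int_{-n}^n(V_1'H_{1,n}+V_2'H_{2,n})\,dx=0$ exactly; the assumed orthogonality \eqref{eqOrthogV} together with dominated convergence gives $d_n\to 0$, and the exponential damping keeps the weighted norm of the correction under control. Your formulation (``transcendentally small correction \ldots\ or absorbing the discrepancy'') is a bit loose --- a correction supported near $\pm M$ works only if you take the weight exponent $\alpha$ strictly less than $c$, which you did not say --- but the paper's choice shows the idea goes through cleanly. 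Everything else, including the boundary-term bookkeeping in the last paragraph, matches the paper's argument.
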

\begin{proof}
Let us begin by assuming that the first orthogonality condition in (\ref{eqOrthogV}) holds.
We will construct the desired solution through a limiting process. Motivated by the second assertion of Proposition \ref{proNonSymPro}, we consider the following sequence of approximate problems:
\begin{equation}\label{eqLInhomoNN} L\left(\begin{array}{c}
		\Phi_1 \\
		
		\Phi_2 \\
	\end{array}
	\right)=\left(\begin{array}{c}
		H_{1,n} \\
		
		H_{2,n} \\
	\end{array}
	\right),\ \ x\in (-n,n);\ \ \Phi_i(\pm n)=0,\ i=1,2,\ n\geq 1,
\end{equation}
where
\[
\left(\begin{array}{c}
H_{1,n} \\

H_{2,n} \\
\end{array}
\right)=\left(\begin{array}{c}
H_{1} \\

H_{2} \\
\end{array}
\right)-d_n \left(\begin{array}{c}
V'_1 \\

V'_2 \\
\end{array}
\right)e^{-c|x|},
\]
where
\[
d_n=\frac{\int_{-n}^{n}\left(V'_1H_1+V'_2H_2 \right)dx}{\int_{-n}^{n}\left[(V'_1)^2+(V'_2)^2\right] e^{-c|x|}dx}
\]
is chosen so that
\begin{equation}\label{eqOrthogVnnn}
	\int_{-n}^{n}\left(H_{1,n}V'_1+H_{2,n}V'_2 \right)dx=0.
\end{equation}
We note that (\ref{eqExp}), (\ref{eqOrthogV}) and Lebesgue's dominated convergence theorem yield that
\[
d_n \to 0.
\]

By the second assertion of Proposition \ref{proNonSymPro}, if $n$ is sufficiently large, there exists a solution
$(\Phi_{1,n},\Phi_{2,n})$  to (\ref{eqLInhomoNN}) such that
\[
\sum_{i=1}^{2}\left\{\|\Phi'_{i,n} \|_{L^\infty(-n,n)}+\|\Phi_{i,n} \|_{L^\infty(-n,n)}\right\}\leq C,
\]
for some constant $C>0$ that is independent of $n$. Hence, thanks again to the standard diagonal-compactness argument, letting $n\to \infty$ in (\ref{eqLInhomoNN}) (along the appropriate subsequence) yields a bounded solution to (\ref{eqLInhomo}). The asymptotic behaviour (\ref{eqRuns}) is a direct consequence of Lemma \ref{lemLinear}. Lastly, relation (\ref{eqa+a-}) follows at once by testing (\ref{eqLInhomo}) with $(xV'_1+V_1,xV'_2+V_2)$.

The proof in the case of the second orthogonality condition in (\ref{eqOrthogV}) is completely analogous.
\end{proof}
We can now establish our main result concerning the solvability properties of (\ref{eqLInhomo}).
\begin{pro}\label{proSymA}
	Given $(H_1,H_2)\in  \left[C(\mathbb{R})\right]^2$ satisfying the exponential decay estimate
	(\ref{eqExp}), there exists a  solution $(\Phi_1,\Phi_2)\in \left[C^2(\mathbb{R})\right]^2$ to (\ref{eqLInhomo}) such that
	\[\begin{array}{lll}
	\Phi_1(x)=a_++bx+\mathcal{O}(e^{-c'x}), & \Phi_2(x)=\mathcal{O}(e^{-c'x}) & \textrm{as}\ \ x\to +\infty;
	\\
	& & \\
	\Phi_1(x)=\mathcal{O}(e^{c'x}),& \Phi_2(x)=a_-+bx+\mathcal{O}(e^{c'x})& \textrm{as}\ \ x\to -\infty,
	\end{array}
	\]
	for  any $c' \in (0,c)$, where
	\[
	b=-\frac{1}{2\psi_0}\int_{-\infty}^{\infty}\left(V'_1H_1+V'_2H_2 \right)dx,
	\]
	and $a_+,a_-$ satisfy (\ref{eqa+a-}).
\end{pro}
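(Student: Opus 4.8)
The plan is to decompose the inhomogeneity $(H_1,H_2)$ into a part that is orthogonal to $(V_1',V_2')$ plus a correction whose contribution can be computed explicitly, and then apply Lemma \ref{lemFred}. Concretely, let
\[
b=-\frac{1}{2\psi_0}\int_{-\infty}^{\infty}\left(V'_1H_1+V'_2H_2 \right)dx,
\]
and consider the modified inhomogeneity obtained by subtracting off a multiple of $L(xV_1'+V_1,xV_2'+V_2)$. Since $(xV_1'+V_1,xV_2'+V_2)$ lies in the formal kernel of $L$ (recall (\ref{eqkernel})), this subtraction does not change the equation at the level of $L$ applied to the unknown; rather, the idea is to look for $(\Phi_1,\Phi_2)$ in the form
\[
(\Phi_1,\Phi_2)=b\,(xV_1'+V_1,\,xV_2'+V_2)+(\Psi_1,\Psi_2),
\]
so that $(\Psi_1,\Psi_2)$ solves $L(\Psi_1,\Psi_2)=(H_1,H_2)$ as well (because the explicit part is annihilated by $L$). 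At first glance this seems to gain nothing, so the real point is different: one must instead choose the coefficient of the growing kernel element \emph{after} solving a problem with a genuinely decaying right-hand side.

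So here is the actual route. First I would verify that the pair
\[
(\tilde H_1,\tilde H_2)=(H_1,H_2)+2\psi_0 b\,\big(w_1,w_2\big),
\]
for a suitable fixed exponentially decaying pair $(w_1,w_2)$ normalized so that $\int(V_1'w_1+V_2'w_2)=1$, satisfies the first orthogonality condition in (\ref{eqOrthogV}); indeed $\int(V_1'\tilde H_1+V_2'\tilde H_2)=\int(V_1'H_1+V_2'H_2)+2\psi_0 b=0$ by the definition of $b$. Then Lemma \ref{lemFred} produces a solution $(\Psi_1,\Psi_2)$ of $L(\Psi_1,\Psi_2)=(\tilde H_1,\tilde H_2)$ with the bounded asymptotics (\ref{eqRuns}), i.e. $\Psi_1\to \tilde a_+$, $\Psi_2\to 0$ as $x\to+\infty$ and symmetrically at $-\infty$, with $\tilde a_++\tilde a_-$ given by (\ref{eqa+a-}) applied to $(\tilde H_1,\tilde H_2)$. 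Next I would observe that $(w_1,w_2)$ itself, being exponentially decaying and orthogonal-free, is handled by Lemma \ref{lemFred} under the second orthogonality condition — or more simply, I would absorb the correction by noting that $L$ applied to $-2\psi_0 b\cdot(\text{particular solution for }(w_1,w_2))$ must be added back. Cleaner: set $(\Phi_1,\Phi_2)=(\Psi_1,\Psi_2)-2\psi_0 b\,(\omega_1,\omega_2)$ where $(\omega_1,\omega_2)$ is a fixed solution of $L(\omega_1,\omega_2)=(w_1,w_2)$ with asymptotics of the form $\omega_1=\alpha_++\beta x+\mathcal O(e^{-c'x})$, etc., extracted once and for all from Lemma \ref{lemFred}; one checks that the resulting linear-growth coefficient of $\Phi_1$ is exactly $b$ after normalizing the choice of $(w_1,w_2)$ appropriately. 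Then $L(\Phi_1,\Phi_2)=(\tilde H_1,\tilde H_2)-2\psi_0 b(w_1,w_2)=(H_1,H_2)$, as desired.

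The asymptotic expansions then follow immediately: $\Phi_i$ is a fixed linear combination of things that are either $a_\pm+\mathcal O(e^{-c'|x|})$ (from $\Psi_i$, via Lemma \ref{lemFred}) or $\alpha_\pm+\beta x+\mathcal O(e^{-c'|x|})$ (from $\omega_i$), so $\Phi_1=a_++bx+\mathcal O(e^{-c'x})$ at $+\infty$ and $\Phi_2=a_-+bx+\mathcal O(e^{c'x})$ at $-\infty$, with the $\mathcal O$-terms in the other component decaying because $V_2'$ (resp. $V_1'$) decays super-exponentially by (\ref{eqV1V2asympt}), so no growing term is forced there. Finally, the identity (\ref{eqa+a-}) for $a_++a_-$ is obtained, exactly as in the proof of Lemma \ref{lemFred}, by testing $L(\Phi_1,\Phi_2)=(H_1,H_2)$ against $(xV_1'+V_1,xV_2'+V_2)$ and integrating by parts over $(-R,R)$, letting $R\to\infty$ and keeping track of the boundary terms, which now involve both the constants $a_\pm$ and the linear term $bx$; the $bx$ contributions cancel by the $a_1=a_2$-type reflection law already isolated in the proof of Proposition \ref{proNonSymPro}. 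The main obstacle is bookkeeping: making sure the coefficient of the growing kernel mode in the final $\Phi_1$ comes out to be precisely $b$ (and not $b$ plus some spurious contribution from $(\omega_1,\omega_2)$), which forces a careful, explicit normalization of the auxiliary decaying pair $(w_1,w_2)$ and of the fixed solution $(\omega_1,\omega_2)$; everything else is a direct consequence of Lemmas \ref{lemLinear} and \ref{lemFred} and the super-exponential decay in (\ref{eqV1V2asympt}).
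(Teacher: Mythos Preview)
Your approach has a genuine gap. You attempt to write $(\Phi_1,\Phi_2)=(\Psi_1,\Psi_2)-2\psi_0 b\,(\omega_1,\omega_2)$, where $(\omega_1,\omega_2)$ solves $L(\omega_1,\omega_2)=(w_1,w_2)$ and is supposed to carry the linear growth $\beta x$. You claim this $(\omega_1,\omega_2)$ is ``extracted once and for all from Lemma~\ref{lemFred}'', but Lemma~\ref{lemFred} only produces \emph{bounded} solutions, and only under an orthogonality hypothesis; your $(w_1,w_2)$ is normalized precisely so that the first orthogonality condition \emph{fails}. So the existence of $(\omega_1,\omega_2)$ with linear growth is exactly the content of the proposition you are trying to prove, and the argument is circular. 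Moreover, you cannot repair this by adding a multiple of the kernel element $(xV_1'+V_1,xV_2'+V_2)$ to a bounded solution: that element has slope $+2\psi_0$ in the first component at $+\infty$ and slope $-2\psi_0$ in the second component at $-\infty$, whereas the proposition requires the \emph{same} slope $b$ at both ends. This sign mismatch is not bookkeeping; it is the obstruction.

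The paper's proof resolves this in one stroke by a different, explicit ansatz:
\[
(\Phi_1,\Phi_2)=B\,(V_1,\,-V_2)+(\Psi_1,\Psi_2).
\]
The pair $(V_1,-V_2)$ has exactly the right linear behaviour (slope $+\psi_0$ for the first component at $+\infty$ and slope $+\psi_0$ for the second at $-\infty$), and its image under $L$ is computed directly from (\ref{eqBUsystem}) to be $(-2V_1V_2^2,\,2V_1^2V_2)$, which is super-exponentially decaying. One then chooses
\[
B=-\frac{1}{2\psi_0^2}\int_{-\infty}^{\infty}\!\bigl(V_1'H_1+V_2'H_2\bigr)\,dx
\]
so that the new right-hand side satisfies the first orthogonality condition in (\ref{eqOrthogV}); Lemma~\ref{lemFred} then yields a bounded $(\Psi_1,\Psi_2)$, and $b=B\psi_0$ drops out immediately. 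The whole argument is five lines, with no auxiliary $(w_1,w_2)$, $(\omega_1,\omega_2)$, or normalization to track.
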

\begin{proof}
The main idea is to search for a solution in the form
\[
\left(\begin{array}{c}
\Phi_1 \\

\Phi_2 \\
\end{array}
\right)=B\left(\begin{array}{c}
V_1 \\

-V_2 \\
\end{array}
\right)+\left(\begin{array}{c}
\Psi_1 \\

\Psi_2 \\
\end{array}
\right)
\]
with $B\in \mathbb{R}$ and $(\Psi_1,\Psi_2)\in \left[C^2(\mathbb{R})\right]^2$.
The new equation  that now needs to be satisfied is
\[
L\left(\begin{array}{c}
\Psi_1 \\

\Psi_2 \\
\end{array}
\right)=\left(\begin{array}{c}
H_1 \\

H_2 \\
\end{array}
\right)+B\left(\begin{array}{c}
2V_1 V_2^2 \\

-2V_2 V_1^2 \\
\end{array}
\right).
\]
To conclude, we can apply Lemma \ref{lemFred}  after making the choice
\[
B=-\frac{\int_{-\infty}^{\infty}\left(V'_1H_1+V'_2H_2 \right)dx}{2\int_{-\infty}^{\infty}\left(V_1 V_2^2 V'_1-V_2 V_1^2 V'_2\right)dx}\stackrel{(\ref{eqBUsystem}),(\ref{eqBUasymp})}{=}-\frac{1}{2\psi_0^2}\int_{-\infty}^{\infty}
\left(V'_1H_1+V'_2H_2 \right)dx.
\]
\end{proof}

By applying Proposition \ref{proSymA}  to the case where the
righthand side of (\ref{eqLInhomo}) is the pair $(F_1,F_2)$ (which actually is independent of $\Lambda$ and satisfies (\ref{eqsectildf})), as defined through
(\ref{eqlinearInhomTilde}),  we obtain the existence
of a solution $(\hat{\Phi}_1,\hat{\Phi}_2)$ to  the system
\begin{equation}\label{eqPhiHatDef}
L\left(\begin{array}{c}
  \hat{\Phi}_1 \\

   \hat{\Phi}_2 \\
\end{array} \right)=\left(\begin{array}{c}
  F_1 \\

  F_2 \\
\end{array} \right),\ \ x\in \mathbb{R},
\end{equation}
such that
\begin{equation}\label{eqPhiHatAsympt}\begin{array}{lll}
                                         \hat{\Phi}_1(x)=a_++bx+\mathcal{O}(e^{-Dx}),&  \hat{\Phi}_2(x)=\mathcal{O}(e^{-Dx})&  \textrm{as}\ x\to
+\infty; \\
                                          &   \\
 \hat{\Phi}_1(x)=\mathcal{O}(e^{Dx}),        & \hat{\Phi}_2(x)=a_-+bx+\mathcal{O}(e^{Dx})&  \textrm{as}\ x\to
-\infty,
                                      \end{array}
\end{equation}
for some $a_\pm,b\in \mathbb{R}$ and any $D>0$ (the expressions for the sum $a_++a_-$ and $b$ which are provided by the aforementioned proposition, with $(H_1,H_2)$ in place of $(F_1,F_2)$, will not be needed).
In fact, the above relations can be differentiated arbitrary many times.

This allows us to improve our inner approximate solution (\ref{eqvinprelim}):
\begin{definition}\label{definner}
We define the
inner approximate  solution to (\ref{eqEqGen}) as $(v_{1,in},v_{2,in})$, with
\begin{equation}\label{eqvinDef}
v_{i,in}(z)=\mu
\Lambda^{-\frac{1}{4}}V_i(x)+\mu^{-1}\Lambda^{-\frac{3}{4}}\left[Z_i(x)+\hat{\Phi}_i(x)
\right]+BE_i(x),\ \ |z|\leq (\ln \Lambda)\Lambda^{-\frac{1}{4}},\
i=1,2,
\end{equation} where $x$ is the stretched variable (\ref{eqx}), $V_i,Z_i,\hat{\Phi}_i$ are defined through  Proposition
\ref{proBerestycki}, (\ref{eqZi}), (\ref{eqPhiHatDef})-(\ref{eqPhiHatAsympt})
respectively and
\begin{equation}\label{eqEi}
(E_1,E_2)=(xV'_1+V_1,xV'_2+V_2)
\end{equation}
is the second element of the kernel of $L$ from (\ref{eqkernel}). The constants  $\mu$ and $B$ will be
determined later, subject to the constraints (\ref{eqmuprelim})
and
\begin{equation}\label{eqBiprelim}
B=\mathcal{O}(\Lambda^{-\frac{3}{4}})\ \ \textrm{as}\ \ \Lambda\to
\infty,
\end{equation}
respectively.\end{definition}

\begin{rem}\label{remMuB}
It may appear at first sight that the above inner approximate solution contains two free parameters,
$\mu$ and $B$. However, keep in mind that both are present due to the same reason, namely the invariance of (\ref{eqBUsystem}) under scaling. So, essentially there is only one free parameter. An analogous remark applies to the outer approximate solution in (\ref{eqOutDef}). On the other hand, we stress that, in principle, all the aforementioned parameters  should be present when carrying out the formal matched asymptotic analysis.
\end{rem}

\subsubsection{The remainder of the inner approximate  solution}\label{subsubsecRemIn}
In view of (\ref{eqx}), (\ref{eqmuprelim}), (\ref{eqRemInBasic}),
(\ref{eqBiprelim}), and the construction of  $(\hat{\Phi}_1,\hat{\Phi}_2)$, we
have the validity of the following lemma.
\begin{lem}\label{lemRemIn} In equation (\ref{eqEqGen}),
 the remainder
\[
R(v_{1,in},v_{2,in})=\left(\begin{array}{c}
  -v_{1,in}''+v_{1,in}^3-v_{1,in}+\Lambda v_{2,in}^2 v_{1,in}  \\
    \\
  -v_{2,in}''+v_{2,in}^3-v_{2,in}+\Lambda v_{1,in}^2 v_{2,in}  \\
\end{array}
 \right)
\]
which is left by the solution $(v_{1,in},v_{2,in})$ of Definition \ref{definner} satisfies
\[
R(v_{1,in},v_{2,in})=\mathcal{O}(\Lambda^{-\frac{3}{4}})\left(\begin{array}{c}
   \Lambda^{\frac{3}{4}}z^3+1 \\
    \\
 e^{-D\Lambda^\frac{1}{4}z} \\
\end{array}
 \right),
\]
for any $D\geq 1$, uniformly on $\left[0,(\ln
\Lambda)\Lambda^{-\frac{1}{4}} \right]$, as $\Lambda \to \infty$.
An analogous estimate holds on $\left[-(\ln
\Lambda)\Lambda^{-\frac{1}{4}},0 \right]$.
\end{lem}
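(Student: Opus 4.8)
The plan is to substitute the expression \eqref{eqvinDef} into the nonlinear operator in \eqref{eqEqGen} and track the orders of the various terms in powers of $\Lambda^{-1/4}$, using the stretched variable $x=\mu\Lambda^{1/4}z$ from \eqref{eqx}. First I would multiply the first equation by $\mu^{-2}\Lambda^{-1/2}$ to put it in the blow-up variable, so that the leading second-order operator becomes $-d^2/dx^2$ acting on the profile. Plugging in $v_{1,in}=\mu\Lambda^{-1/4}V_1+\mu^{-1}\Lambda^{-3/4}(Z_1+\hat\Phi_1)+BE_1$, the $\mathcal{O}(\Lambda^{-1/4})$ terms cancel exactly by \eqref{eqBUsystem}; the $\mathcal{O}(\Lambda^{-3/4})$ terms are designed to cancel because $(Z_1+\hat\Phi_1)$ was chosen to solve $L(Z+\hat\Phi)=\mu^{-1}\Lambda^{-3/4}\cdot(\text{the }V_i\text{ source})$, cf. the discussion around \eqref{eqlinearInhomTilde}, \eqref{eqPhiHatDef}; and the $BE_i$ term contributes nothing to $L$ since $(E_1,E_2)$ lies in $\ker L$ by \eqref{eqkernel}. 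What remains after these designed cancellations are genuinely higher-order terms, and the point of the lemma is simply to estimate them.

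The bookkeeping I would organize as follows. The uncancelled contributions come from: (i) the $\Lambda^{-3/4}$-order discrepancy between the truncated polynomial $Z_1$ (valid only for $x\ge 1$, recall \eqref{eqZi}) and the actual tail of $V_1$ encoded in $F_1$—but this has already been absorbed into $\hat\Phi_1$ via \eqref{eqlinearInhomTilde}, \eqref{eqsectildf}, so on $[0,(\ln\Lambda)\Lambda^{-1/4}]$ the net source of this type is controlled by the growth of $Z_1+\hat\Phi_1$, which by \eqref{eqZi} and \eqref{eqPhiHatAsympt} is at most cubic in $x$, i.e. $\mathcal{O}(\Lambda^{-3/4})(\Lambda^{3/4}z^3+1)$ after undoing the scaling; (ii) the genuinely nonlinear cross terms in \eqref{eqRemInBasic} carrying positive powers of $\Lambda$—terms like $\Lambda\Phi_2^2\Phi_1$, $\mu\Lambda^{3/4}\Phi_2^2 V_1$, $2\mu\Lambda^{3/4}V_2\Phi_1\Phi_2$, etc.—which are harmless because $\Phi_i$ and $V_2$ (for $x>0$) decay like $e^{-cx^2}$ or at least $e^{-Dx}$ by \eqref{eqV1V2asympt}, \eqref{eqPhiHatAsympt}, so that $\Lambda^{k/4}e^{-D\Lambda^{1/4}z}=\mathcal{O}(\Lambda^{-\infty})$ uniformly on the inner interval; (iii) the constraint-induced errors from \eqref{eqmuprelim} ($\mu=1+\mathcal{O}(\Lambda^{-1/2})$) and \eqref{eqBiprelim} ($B=\mathcal{O}(\Lambda^{-3/4})$), which feed into the already-cancelled leading terms and produce contributions of size $\mathcal{O}(\Lambda^{-1/2})\cdot\mathcal{O}(\Lambda^{-1/4})\cdot(\text{polynomial in }x)$ in the first component and exponentially small ones in the second. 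Collecting (i)–(iii) gives exactly the claimed form
$R(v_{1,in},v_{2,in})=\mathcal{O}(\Lambda^{-3/4})(\Lambda^{3/4}z^3+1,\ e^{-D\Lambda^{1/4}z})$.
The second component is entirely dominated by terms that are exponentially small in $\Lambda^{1/4}z$, because every surviving term there involves at least one factor of $V_2$, $\Phi_2$, or (since $Z_2\equiv 0$ for $x\ge1$ and is smooth) a function supported essentially near the origin, and all of these decay like $e^{-D\Lambda^{1/4}z}$.

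The mildly delicate point—more careful than hard—is making sure no term of order exactly $\Lambda^{-3/4}$ without a polynomial factor has been overlooked on the positive side: one must confirm that the only $\Lambda^{-3/4}$-order piece not killed by $L$ is the one paired with the polynomial growth of $Z_1+\hat\Phi_1$, and that the cross terms $3\mu^2\Lambda^{-1/2}V_1^2\Phi_1$, $3\mu\Lambda^{-1/4}V_1\Phi_1^2$, $\mu^3\Lambda^{-3/4}V_1^3$ etc. from \eqref{eqRemInBasic}, once $\Phi_i=\mu^{-1}\Lambda^{-3/4}(Z_i+\hat\Phi_i)+BE_i$ is inserted, all fall into the stated envelope—e.g. $\mu^3\Lambda^{-3/4}V_1^3$ over the inner interval is $\mathcal{O}(\Lambda^{-3/4})\cdot\mathcal{O}((\Lambda^{1/4}z)^3+1)=\mathcal{O}(\Lambda^{-3/4})(\Lambda^{3/4}z^3+1)$ since $V_1(x)=\psi_0 x+\kappa+\mathcal{O}(e^{-cx^2})$ by \eqref{eqV1V2asympt}, while $3\mu\Lambda^{-1/4}V_1\Phi_1^2$ is even smaller. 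The symmetric statement on $[-(\ln\Lambda)\Lambda^{-1/4},0]$ follows immediately by the reflection $V_1(-x)=V_2(x)$, $Z_1(-x)=Z_2(x)$, and the corresponding symmetry in the construction. None of this requires any new idea beyond careful term-by-term inspection of \eqref{eqRemInBasic}, which is why the lemma is merely stated as following "in view of" the cited relations.
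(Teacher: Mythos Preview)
Your proposal is correct and matches the paper's approach; indeed the paper gives no detailed proof at all, merely stating that the lemma follows ``in view of (\ref{eqx}), (\ref{eqmuprelim}), (\ref{eqRemInBasic}), (\ref{eqBiprelim}), and the construction of $(\hat\Phi_1,\hat\Phi_2)$,'' and you have carried out precisely the term-by-term inspection of (\ref{eqRemInBasic}) that this phrase implies. Your organization into the three sources (i)--(iii) and your check that the dominant surviving first-component terms are $-\Phi_1$ and $\mu^3\Lambda^{-3/4}V_1^3$, both of size $\mathcal{O}(\Lambda^{-3/4})(1+|x|^3)$, while every second-component term carries a factor of $V_2$, $Z_2$, $\hat\Phi_2$ or $E_2$ and hence an $e^{-D\Lambda^{1/4}z}$ decay, is exactly the intended verification.
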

\section{Matching the outer and inner approximate solutions}\label{secMatch}
In this section we will 'stitch' together   the outer and inner approximate solutions
 by suitably adjusting the parameters $\xi_1,\xi_2,\tau_1,\tau_2,\mu, B$
in their definitions (recall (\ref{eqOutDef}) and (\ref{eqvinDef})), subject to the constraints
(\ref{eqconstr}), (\ref{eqmuprelim}) and (\ref{eqBiprelim}). Classical singular perturbation theory dictates that this must be done so
that the inner and outer approximations are sufficiently close in the $C^1$-sense over some intermediate zone satisfying
$\Lambda^{-\frac{1}{4}}\ll |z|\ll 1$ (see the section on matched asymptotic expansions in any
textbook on the subject or the so called exchange lemmas of the modern geometric singular perturbation theory). This property is of course already satisfied by the first components of the aforementioned approximations in the negative part of the intermediate zone, and the analogous property holds in the positive part.
Thus, the task of matching the inner and outer approximate solutions in $C^1$ over an intermediate zone amounts to satisfying a total of four algebraic equations (one for each of the first two terms of the Taylor expansions of the non-trivial outer approximations). However, in view of Remark \ref{remMuB}, we essentially have only three free parameters to adjust for this purpose. Fortunately, with some care, this overdetermined issue can be resolved by exploiting the conservation of the hamiltonian of (\ref{eqEqGen}).  Actually,
%for reasons that will become apparent (ultimately related to the first case of Lemma \ref{lemUnondeg}),
it is more convenient to  match them continuously as best as possible at just the two boundary points
$\pm(\ln \Lambda)\Lambda^{-\frac{1}{4}}$. It turns out that the  algebraic system which arises from these considerations,  comprising of three equations   (one at each boundary point together with  one  from the explotation of the hamiltonian structure, say at the origin) and containing three unknowns (essentially coming from the translation and scaling invariances of the outer and inner limit problems respectively),  is solvable for large $\Lambda$ thanks to the fact that $\psi_0\neq 0$. In fact, this type of matching leads us naturally to building a  solution of (\ref{eqEqGen})-(\ref{eqBdryGen}) in the same spirit, that is by constructing separately  inner and outer genuine solutions which match continuously at $\pm(\ln \Lambda)\Lambda^{-\frac{1}{4}}$ and share the same hamiltonian constant. In particular, the latter strategy allows us to use directly the last observation in Lemma \ref{lemUnondeg} and  Proposition \ref{proBerestNondegen} for this purpose.

\subsection{Matching $(v_{1,out},v_{2,out})$ and
$(v_{1,in},v_{2,in})$ continuously at $\pm (\ln
\Lambda)\Lambda^{-\frac{1}{4}}$}\label{subsecMatchCont} In view of
(\ref{eqGenPsi0}), (\ref{eqOutDef}), (\ref{eqconstr}) and the facts
that \begin{equation}\label{eqUfacts}U_1''(0)=0,\ \
 U_1'''(0)=-\psi_0,\ \ U_1^{(4)}(0)=0,\end{equation} we find that
\[
\begin{array}{rcl}
  v_{1,out}\left((\ln \Lambda)\Lambda^{-\frac{1}{4}} \right) & = & \psi_0 \left((\ln \Lambda)\Lambda^{-\frac{1}{4}}+\xi_1
\right)-\frac{\psi_0}{6}\left((\ln
\Lambda)\Lambda^{-\frac{1}{4}}+\xi_1 \right)^3 \\
    &   &   \\
    &   & +\mathcal{O}\left((\ln \Lambda)^5\Lambda^{-\frac{5}{4}}
\right)+\tau_1 \psi_0+\tau_1 \mathcal{O}\left((\ln
\Lambda)^2\Lambda^{-\frac{1}{2}} \right)
 \\
\end{array}
\]
as $\Lambda \to \infty$, where the quantities in the Landau
symbols are independent of $\tau_1$. In turn, by expanding, we get
that
\[
\begin{array}{rcl}
  v_{1,out}\left((\ln \Lambda)\Lambda^{-\frac{1}{4}} \right) & = & \psi_0 \xi_1+\psi_0(\ln \Lambda)\Lambda^{-\frac{1}{4}}+\tau_1
  \psi_0-\frac{\psi_0}{6}\xi_1^3-\frac{\psi_0}{2}(\ln
\Lambda)\Lambda^{-\frac{1}{4}}\xi_1^2
\\
& &\\
& &-\frac{\psi_0}{6}(\ln
\Lambda)^3\Lambda^{-\frac{3}{4}}-\frac{\psi_0}{2}(\ln
\Lambda)^2\Lambda^{-\frac{1}{2}}\xi_1 \\
    &   &   \\
    &   &+\mathcal{O}\left((\ln
\Lambda)^5\Lambda^{-\frac{5}{4}} \right)+\tau_1
\mathcal{O}\left((\ln \Lambda)^2\Lambda^{-\frac{1}{2}} \right)
 \\
\end{array}
\]
as $\Lambda \to \infty$.

 On the other side, from
(\ref{eqV1V2asympt}), (\ref{eqZi}), (\ref{eqPhiHatAsympt}) and
(\ref{eqvinDef}), we obtain that
\[\begin{array}{rcl}
  v_{1,in}\left((\ln \Lambda)\Lambda^{-\frac{1}{4}} \right) & = &  \mu^2\psi_0
(\ln \Lambda) \Lambda^{-\frac{1}{4}}+\mu \kappa
\Lambda^{-\frac{1}{4}}+B\kappa +2B\psi_0\mu (\ln \Lambda)+\mu^{-1}\Lambda^{-\frac{3}{4}}a_+ \\
    &   &   \\
   &   &+b(\ln
\Lambda)\Lambda^{-\frac{3}{4}} -\frac{\psi_0}{6}\mu^2(\ln
\Lambda)^3\Lambda^{-\frac{3}{4}}-\frac{1}{2}\kappa \mu (\ln
\Lambda)^2

\Lambda^{-\frac{3}{4}}+\mathcal{O}(\Lambda^{-\infty}) \\
\end{array}
\]
 as $\Lambda \to \infty$.

Given $B$
satisfying (\ref{eqBiprelim}), we take
\begin{equation}\label{eqmuView}
\mu=1,\ \ \xi_1=\psi_0^{-1}\kappa \Lambda^{-\frac{1}{4}}+2B (\ln
\Lambda)+\frac{1}{2}(\ln \Lambda)\Lambda^{-\frac{1}{4}}\xi_1^2,
\end{equation}
which is indeed possible by the implicit function theorem for $\Lambda$ large (we can even find an explicit formula for $\xi_1$ by solving the above trinomial). In
turn, we choose
\[
\tau_1=\frac{\xi_1^3}{6}+\psi_0^{-1}B \kappa+\psi_0^{-1}a_+
\Lambda^{-\frac{3}{4}}+\psi_0^{-1}b(\ln
\Lambda)\Lambda^{-\frac{3}{4}}.
\]
Then, using that
\[
\xi_1=\psi_0^{-1}\kappa \Lambda^{-\frac{1}{4}}+\mathcal{O}\left((\ln
\Lambda)\Lambda^{-\frac{3}{4}} \right)\ \ \textrm{as}\ \
\Lambda\to \infty,
\]
it follows readily that
\begin{equation}\label{eqv1(out-in)}
(v_{1,out}-v_{1,in})\left((\ln \Lambda)\Lambda^{-\frac{1}{4}}
\right)=\mathcal{O}\left((\ln \Lambda)^5\Lambda^{-\frac{5}{4}}
\right)\ \ \textrm{as}\ \ \Lambda\to \infty.\end{equation}
We have
\begin{equation}\label{eqv2(out-in)}
(v_{2,out}-v_{2,in})\left((\ln \Lambda)\Lambda^{-\frac{1}{4}}
\right)=-v_{2,in}\left((\ln \Lambda)\Lambda^{-\frac{1}{4}}
\right)=\mathcal{O}\left(\Lambda^{-\infty} \right)\ \ \textrm{as}\ \
\Lambda\to \infty.\end{equation}
Analogous considerations apply at $-(\ln \Lambda)\Lambda^{-\frac{1}{4}}$.
\subsection{Adjusting the value of the Hamiltonian on the inner approximate solution at
$z=0$}\label{subsecAdjHam1} In this subsection, we will choose $B$, under the constraint (\ref{eqBiprelim}),
so that the value of the Hamiltonian on $(v_{1,in},v_{2,in})$ at
$z=0$ is equal to the Hamiltonian constant of the expected
heteroclinic connection, namely $-\psi_0^2/2$.

Firstly, from (\ref{eqx}), (\ref{eqvinDef}) and
(\ref{eqBiprelim}), we note that
\[
\left[v_{i,in}'(0)\right]^2=\left[V'_i(0)\right]^2+2V'_i(0)\left[O(\Lambda^{-\frac{1}{2}})+2V'_i(0)B\Lambda^\frac{1}{4}
\right]+\mathcal{O}(\Lambda^{-1}),\ \ i=1,2,
\]
as $\Lambda \to \infty$, with $O(\Lambda^{-\frac{1}{2}})$ being
independent of $B$. Furthermore, it is clear that
\[
v_{i,in}^4(0)=\Lambda^{-1}V_i^4(0)+\mathcal{O}(\Lambda^{-\frac{1}{2}})B^2+4\Lambda^{-\frac{3}{4}}V_i^4(0)B+O(\Lambda^{-\frac{3}{2}})\
\ \textrm{as}\ \Lambda \to \infty,
\]
where $O(\Lambda^{-\frac{3}{2}})$ is independent of $B$, and that
\[
\frac{\left(1-v_{i,in}^2(0)
\right)^2}{4}=\frac{1}{4}+O(\Lambda^{-\frac{1}{2}})+\mathcal{O}(\Lambda^{-\frac{1}{4}})B\stackrel{(\ref{eqGenPsi0})}{=}
\frac{\psi_0^2}{2}+O(\Lambda^{-\frac{1}{2}})+\mathcal{O}(\Lambda^{-\frac{1}{4}})B
\]
as $\Lambda\to \infty$, $i=1,2$, where $O(\Lambda^{-\frac{1}{2}})$ is
independent of $B$. Now, using that $V_1(0)=V_2(0)$,
${V}'_1(0)=-V'_2(0)$ and the hamiltonian identity
\[
(V'_1)^2+(V'_2)^2-V_1^2V_2^2=\psi_0^2,\ \ x\in \mathbb{R},
\]
it follows readily that the sought after equality
\[
H(v_{1,in}(0),v_{2,in}(0))=-\frac{\psi_0^2}{2}\ \ (\textrm{with\
the\ obvious\ notation,\ keep\ in\ mind\ (\ref{eqHamilton}),\ (\ref{eqGenPsi0})})
\]
takes the form
\[
2\psi_0^2B
\Lambda^\frac{1}{4}=O(\Lambda^{-\frac{1}{2}})+\mathcal{O}(\Lambda^{-\frac{1}{4}})B+\mathcal{O}(\Lambda^{-1})\
\ \textrm{as}\ \Lambda\to \infty.
\]
The above equation clearly has a unique solution
\begin{equation}\label{eqBita}
B=\mathcal{O}(\Lambda^{-\frac{3}{4}})\ \ \textrm{as}\ \Lambda\to
\infty,
\end{equation}
as desired (keep in mind that, according to our notation,
the term $O(\Lambda^{-\frac{1}{2}})$ above does not contain $B$).
\subsection{A refined inner approximate solution $(w_{1,in},w_{2,in})$}\label{secRefIn}
For $\tilde{B}\in \mathbb{R}$ to be chosen later, subject to the
constraint
\begin{equation}\label{eqBtildeConstr}
\tilde{B}=\mathcal{O}(\Lambda^{-1})\ \ \textrm{as}\ \Lambda \to
\infty,
\end{equation}
we consider the more refined inner approximate solution $(w_{1,in},w_{2,in})$
\begin{equation}\label{innerwi}
w_{i,in}(z)=v_{i,in}(z)+\tilde{B}E_i(x),\ \ |z|\leq (\ln
\Lambda)\Lambda^{-\frac{1}{4}},\  i=1,2,\end{equation} where $(v_{1,in},v_{2,in})$
 is defined in Definition \ref{definner} and $E_i$ comes from (\ref{eqEi}).
 Actually, $\tilde{B}$
will turn out to be chosen much smaller than in
(\ref{eqBtildeConstr}).
%\subsection{The remainder of the refined inner approximate solution}\label{subsecRemRefIn}

It is easy to see that the assertion of Lemma \ref{lemRemIn}, concerning the remainder of this refined inner solution, continues
to hold for $(w_{1,in},w_{2,in})$.
\section{Solution of the inner problem}\label{secSolInner}
In this section, we will show that the one-parameter family of refined inner
approximate solutions $(w_{1,in},w_{2,in})$, described in the
previous section (parameterized by $\tilde{B}$), can be perturbed
smoothly to a one-parameter family of inner genuine solutions to the
system (\ref{eqEqGen}), for large $\Lambda>0$. Then, we will show
that there exists at least one value of $\tilde{B}$, in the range
(\ref{eqBtildeConstr}), for which the corresponding inner genuine solution to
(\ref{eqEqGen}) has a Hamiltonian constant equal to $-\psi_0^2/2$.
\subsection{The perturbation argument}\label{subsecPertIn}
Given $\tilde{B}$ satisfying (\ref{eqBtildeConstr}), we seek a solution of  system (\ref{eqEqGen}) as
\begin{equation}\label{eqGenForm}
({\textbf{v}_{1,in},\textbf{v}_{2,in}})=(w_{1,in},w_{2,in})+(\varphi_1,\varphi_2),
\ \ |z|\leq (\ln \Lambda)\Lambda^{-\frac{1}{4}},
\end{equation}
with
\[
\varphi_i\left(\pm (\ln \Lambda)\Lambda^{-\frac{1}{4}}\right)=0,\
\ i=1,2.
\]
%and \[\varphi_1(-z)=\varphi_2(z),\ \   |z|\leq (\ln
%\Lambda)\Lambda^{-\frac{1}{4}}.\]
After rearranging terms, we find
that $(\varphi_1,\varphi_2)$ has to satisfy
\begin{equation}\label{eqExInBas}\left\{\begin{array}{c}
\mathcal{L}(\varphi_1,\varphi_2)=-R(w_{1,in},w_{2,in})-Q(\varphi_1,\varphi_2)-N(\varphi_1,\varphi_2),
 \\
   \\
  \varphi_i\left(\pm (\ln \Lambda)\Lambda^{-\frac{1}{4}}\right)=0,\
\ i=1,2, \\
\end{array}\right.
\end{equation}
where
\[
\mathcal{L}(\varphi_1,\varphi_2)=\left(\begin{array}{c}
  -\varphi_1''+\Lambda^\frac{1}{2}V_2^2(x)\varphi_1+2\Lambda^\frac{1}{2} V_1(x)V_2(x)\varphi_2 \\
    \\
  -\varphi_2''+\Lambda^\frac{1}{2}V_1^2(x)\varphi_2+2\Lambda^\frac{1}{2} V_1(x)V_2(x)\varphi_1 \\
\end{array} \right),
\]
the term $R(w_{1,in},w_{2,in})$ denotes the remainder which is left by
$(w_{1,in},w_{2,in})$ in (\ref{eqEqGen}) (analogously to Lemma \ref{lemRemIn}),
\[Q(\varphi_1,\varphi_2)=\]\[
\left( \begin{array}{c}
  (3w_{1,in}^2-1)\varphi_1+\Lambda
\left(w_{2,in}^2-\Lambda^{-\frac{1}{2}}V_2^2
\right)\varphi_1+2\Lambda \left(w_{1,in}w_{2,in}
-\Lambda^{-\frac{1}{2}}V_1V_2  \right)\varphi_2 \\
    \\
  (3w_{2,in}^2-1)\varphi_2+\Lambda
\left(w_{1,in}^2-\Lambda^{-\frac{1}{2}}V_1^2
\right)\varphi_2+2\Lambda \left(w_{1,in}w_{2,in}
-\Lambda^{-\frac{1}{2}}V_1V_2  \right)\varphi_1 \\
\end{array}
\right)
\]
and
\[N(\varphi_1,\varphi_2)=
\left( \begin{array}{c}
  \varphi_1^3+3 w_{1,in}\varphi_1^2+\Lambda w_{1,in} \varphi_2^2+ \Lambda \varphi_2^2\varphi_1+2\Lambda w_{2,in}\varphi_1\varphi_2 \\
    \\
\varphi_2^3+3 w_{2,in}\varphi_2^2+\Lambda w_{2,in} \varphi_1^2+ \Lambda \varphi_1^2\varphi_2+2\Lambda w_{1,in}\varphi_1\varphi_2\\
\end{array}
\right).
\]

Concerning the linear operator $\mathcal{L}$, we observe that
Proposition \ref{proNonSymPro}, after a simple re-scaling (recall that $x=\Lambda^\frac{1}{4}z$), yields
the following.
\begin{cor}\label{corlinBasic}
Given $\alpha>0$, there exist $\Lambda_0,C>0$ such that the
boundary value problem
\[
\mathcal{L}\left(\begin{array}{c}
  \varphi_1 \\
  \varphi_2 \\
\end{array} \right)=\left(\begin{array}{c}
  h_1 \\
  h_2 \\
\end{array}\right),\ \ |z|<(\ln \Lambda)\Lambda^{-\frac{1}{4}};
\ \ \varphi_i\left(\pm (\ln \Lambda)\Lambda^{-\frac{1}{4}}
\right)=0,\ i=1,2,
\]
where $h_1,h_2\in C\left[-(\ln \Lambda)\Lambda^{-\frac{1}{4}},(\ln
\Lambda)\Lambda^{-\frac{1}{4}} \right]$,
has a unique solution such that
\[
\sum_{i=1}^{2}\left(\Lambda^{-\frac{1}{4}}\|\varphi_i' \|_{L^\infty(I_\Lambda)}+\|\varphi_i \|_{L^\infty(I_\Lambda)}\right)\leq C
\Lambda^{-\frac{1}{2}+\alpha} \sum_{i=1}^{2}\|h_i
\|_{L^\infty(I_\Lambda)},
\]
where
\[
I_\Lambda=\left(-(\ln \Lambda)\Lambda^{-\frac{1}{4}},(\ln
\Lambda)\Lambda^{-\frac{1}{4}} \right),
\]
provided that $\Lambda\geq \Lambda_0$.
\end{cor}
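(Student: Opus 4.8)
The plan is to obtain Corollary \ref{corlinBasic} directly from Proposition \ref{proNonSymPro} by undoing the stretching $x=\Lambda^{\frac14}z$ that is already built into the coefficients of $\mathcal{L}$. Given $h_1,h_2\in C[-(\ln\Lambda)\Lambda^{-\frac14},(\ln\Lambda)\Lambda^{-\frac14}]$, I would set
\[
\Phi_i(x)=\varphi_i\!\left(\Lambda^{-\frac14}x\right),\qquad H_i(x)=h_i\!\left(\Lambda^{-\frac14}x\right),\qquad M=\ln\Lambda .
\]
Since $\frac{d^2}{dz^2}=\Lambda^{\frac12}\frac{d^2}{dx^2}$, the boundary value problem for $(\varphi_1,\varphi_2)$ on $I_\Lambda$ becomes, after dividing through by $\Lambda^{\frac12}$, exactly
\[
L\!\left(\begin{array}{c}\Phi_1\\ \Phi_2\end{array}\right)=\Lambda^{-\frac12}\left(\begin{array}{c}H_1\\ H_2\end{array}\right),\quad |x|<M;\qquad \Phi_i(\pm M)=0,\ i=1,2,
\]
with $L$ the operator from (\ref{eqL}); note the endpoints $z=\pm(\ln\Lambda)\Lambda^{-\frac14}$ map precisely to $x=\pm M$. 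For $\Lambda$ large one has $M=\ln\Lambda\geq M_0$, so Proposition \ref{proNonSymPro} applies and gives existence, uniqueness, and the bound
\[
\sum_{i=1}^{2}\bigl(\|\Phi_i'\|_{L^\infty(-M,M)}+\|\Phi_i\|_{L^\infty(-M,M)}\bigr)\leq C\,M\,\Lambda^{-\frac12}\sum_{i=1}^{2}\bigl\|e^{\alpha|x|}H_i\bigr\|_{L^\infty(-M,M)} .
\]

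Next I would translate both sides back to the $z$ variable. By the chain rule $\varphi_i'(z)=\Lambda^{\frac14}\Phi_i'(x)$, so $\Lambda^{-\frac14}\|\varphi_i'\|_{L^\infty(I_\Lambda)}=\|\Phi_i'\|_{L^\infty(-M,M)}$ and $\|\varphi_i\|_{L^\infty(I_\Lambda)}=\|\Phi_i\|_{L^\infty(-M,M)}$, which is exactly the norm on the left of the corollary. On the right, $\|H_i\|_{L^\infty(-M,M)}=\|h_i\|_{L^\infty(I_\Lambda)}$ and, crucially, the weight only contributes $\|e^{\alpha|x|}\|_{L^\infty(-M,M)}=e^{\alpha M}=\Lambda^{\alpha}$. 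Hence the estimate becomes
\[
\sum_{i=1}^{2}\Bigl(\Lambda^{-\frac14}\|\varphi_i'\|_{L^\infty(I_\Lambda)}+\|\varphi_i\|_{L^\infty(I_\Lambda)}\Bigr)\leq C\,(\ln\Lambda)\,\Lambda^{-\frac12+\alpha}\sum_{i=1}^{2}\|h_i\|_{L^\infty(I_\Lambda)} .
\]
Uniqueness transfers verbatim from the uniqueness assertion in Proposition \ref{proNonSymPro}.

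The only point requiring a word is the spurious logarithmic factor. Since $\alpha>0$ in the corollary is arbitrary, I would simply run the above with $\frac{\alpha}{2}$ in place of $\alpha$ in Proposition \ref{proNonSymPro} and use $(\ln\Lambda)\Lambda^{\frac{\alpha}{2}}\leq\Lambda^{\alpha}$ for $\Lambda$ large to absorb it, which yields the stated exponent $-\frac12+\alpha$ and lets $M_0$ be absorbed into the threshold $\Lambda_0$. I do not anticipate any genuine obstacle: all the analytic content sits in Proposition \ref{proNonSymPro}, and what remains is the bookkeeping of the scaling, the decisive fact being that the exponential weight evaluated at $x=\pm\ln\Lambda$ costs only the mild algebraic factor $\Lambda^{\pm\alpha}$ rather than anything of order $e^{\Lambda^{c}}$, which is precisely what makes the intermediate interval of length $2(\ln\Lambda)\Lambda^{-\frac14}$ the right choice.
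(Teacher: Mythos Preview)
Your argument is correct and is precisely the ``simple re-scaling'' the paper invokes: the paper gives no details beyond that phrase, and you have carried out exactly the intended substitution $x=\Lambda^{1/4}z$, $M=\ln\Lambda$, together with the absorption of the logarithm into the exponent via the $\alpha/2$ trick.
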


On the other side,  the remainder $R(w_{1,in},w_{2,in})$ clearly satisfies the thesis of Lemma \ref{lemRemIn}. Furthermore, using that \[
\left|v_{i,in}(z)\right|\leq C(\ln \Lambda)  \Lambda^{-\frac{1}{4}},\ \  \left|E_i(x)\right|\leq C(\ln \Lambda),
\]
together with the easy to prove estimates
\[
\left| v^2_{i,in}(z)-\Lambda^{-\frac{1}{2}} V^2_i(x) \right|+\left|v_{1,in}v_{2,in}(z)-\Lambda^{-\frac{1}{2}}V_1V_2(x) \right|\leq C (\ln \Lambda)^4  \Lambda^{-\frac{5}{4}},
\]
for $|z|\leq (\ln \Lambda)  \Lambda^{-\frac{1}{4}}$, $i=1,2$, and (\ref{eqBtildeConstr}), it follows readily that there
exists $C>0$ such that
\begin{equation}\label{eqBestim}
\sum_{i=1}^{2}\|Q_i(\varphi_1,\varphi_2) \|_{L^\infty(I_\Lambda)}\leq C \sum_{i=1}^{2}\|\varphi_i \|_{L^\infty(I_\Lambda)},
\end{equation}
 for any $\varphi_1,\varphi_2 \in
C(\overline{I_\Lambda})$.
Moreover, there
exists a $C>0$ such that
\begin{equation}\label{eqNestim}
\|N_i(\varphi_1,\varphi_2) \|_{L^\infty(I_\Lambda)}\leq C
\sum_{i=1}^{2}\left\{\|\varphi_i
\|_{L^\infty}^3+\Lambda^\frac{3}{4}(\ln \Lambda)\|\varphi_i
\|^2_{L^\infty}+\Lambda \|\varphi_i \|_{L^\infty}\|\varphi_{i+1}
\|_{L^\infty}^2 \right\},
\end{equation}
$i=1,2$,
 for any $\varphi_1,\varphi_2 \in
C(\overline{I_\Lambda})$ (with the obvious notation), and
\begin{equation}\label{eqGen44}\begin{array}{c}
 \sum_{i=1}^{2} \|N_i(\varphi_1,\varphi_2)-N_i(\psi_1,\psi_2)
\|_{L^\infty(I_\Lambda)}\leq \\
   \\
  C
\sum_{i=1}^{2}\left\{\Lambda\left(\|\varphi_i
\|_{L^\infty}^2+\|\psi_i
\|_{L^\infty}^2\right)+\Lambda^\frac{3}{4}(\ln
\Lambda)\left(\|\varphi_i \|_{L^\infty}+\|\psi_i \|_{L^\infty} \right)\right\}\\
\ \ \ \ \ \ \ \ \ \ \ \ \ \ \ \ \ \quad \ \ \ \ \ \ \ \ \ \ \ \ \ \ \ \ \ \ \ \ \ \ \ \ \ \ \ \ \ \ \ \ \ \ \  \times \left(\sum_{i=1}^{2}\|\varphi_i-\psi_i \|_{L^\infty}\right), \\
\end{array}
\end{equation}
 for any $\varphi_1,\varphi_2,\psi_1,\psi_2\in
C(\overline{I_\Lambda})$.

In view of the above, and paying attention to the dependence on $\tilde{B}$, a standard application of the contraction
mapping principle yields the following.

\begin{pro}\label{proExistInn}
Given $\alpha\in (0,1)$, there exists $C>0$ such that problem
(\ref{eqExInBas}) has a unique solution satisfying
\[
\sum_{i=1}^{2}\left(\Lambda^{-\frac{1}{4}}\|\varphi_i' \|_{L^\infty(I_\Lambda)}+\|\varphi_i \|_{L^\infty(I_\Lambda)}\right)\leq C
\Lambda^{-\frac{5}{4}+\alpha},
\]
 provided that
$\Lambda$ is sufficiently large. Moreover, this solution depends
continuously, with respect to the $C^1(\overline{I_\Lambda})$-norm, on $\tilde{B}$ as in (\ref{eqBtildeConstr}) (for fixed $\Lambda$).
\end{pro}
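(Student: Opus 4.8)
The plan is to recast (\ref{eqExInBas}) as a fixed point equation for a contraction; the only delicate point will be the bookkeeping of powers of $\Lambda$ and $\ln\Lambda$. Fix $\alpha\in(0,1)$ and choose an auxiliary exponent $\alpha'$ with $0<\alpha'<\min\{\alpha,1-\alpha\}$. Apply Corollary \ref{corlinBasic} with this $\alpha'$ and write $\mathcal{L}^{-1}$ for the resulting bounded solution operator of the Dirichlet problem for $\mathcal{L}$ on $I_\Lambda$, so that $\|\mathcal{L}^{-1}(h_1,h_2)\|_\star\leq C\Lambda^{-\frac12+\alpha'}\sum_{i=1}^{2}\|h_i\|_{L^\infty(I_\Lambda)}$, where $\|\varphi\|_\star:=\sum_{i=1}^{2}\big(\Lambda^{-1/4}\|\varphi_i'\|_{L^\infty(I_\Lambda)}+\|\varphi_i\|_{L^\infty(I_\Lambda)}\big)$ for $\varphi=(\varphi_1,\varphi_2)$. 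Then (\ref{eqExInBas}) is equivalent to $\varphi=\mathcal{T}(\varphi)$ with $\mathcal{T}(\varphi)=\mathcal{L}^{-1}\big(-R(w_{1,in},w_{2,in})-Q(\varphi)-N(\varphi)\big)$ (which indeed maps $C^1(\overline{I_\Lambda})^2$ into $C^2(\overline{I_\Lambda})^2$), and I would seek a fixed point of $\mathcal{T}$ in the closed ball $\mathcal{B}_\rho=\{\varphi:\|\varphi\|_\star\leq\rho\}$ of the Banach space $C^1(\overline{I_\Lambda})\times C^1(\overline{I_\Lambda})$, with $\rho=\Lambda^{-5/4+\alpha}$; note that the nonlinear estimates (\ref{eqBestim})--(\ref{eqGen44}) feed only on $\sum_i\|\varphi_i\|_{L^\infty(I_\Lambda)}\leq\|\varphi\|_\star$.

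For the self-mapping property I would write $\mathcal{T}(\varphi)=\mathcal{T}(0)+(\mathcal{T}(\varphi)-\mathcal{T}(0))$. By Lemma \ref{lemRemIn}, which applies to $(w_{1,in},w_{2,in})$ as noted in Subsection \ref{secRefIn}, together with $\Lambda^{3/4}z^3\leq(\ln\Lambda)^3$ on $I_\Lambda$, one gets $\|R(w_{1,in},w_{2,in})\|_{L^\infty(I_\Lambda)}\leq C(\ln\Lambda)^3\Lambda^{-3/4}$, whence $\|\mathcal{T}(0)\|_\star\leq C(\ln\Lambda)^3\Lambda^{-5/4+\alpha'}\leq\frac12\rho$ for $\Lambda$ large, since $\alpha'<\alpha$ absorbs the logarithm. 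For the increment, as $N(0)=0$, apply $\|\mathcal{T}(\varphi)-\mathcal{T}(0)\|_\star\leq C\Lambda^{-1/2+\alpha'}\big(\|Q(\varphi)\|_{L^\infty(I_\Lambda)}+\|N(\varphi)\|_{L^\infty(I_\Lambda)}\big)$: estimate (\ref{eqBestim}) contributes $C\Lambda^{-1/2+\alpha'}\rho$, and (\ref{eqNestim}) contributes, on $\mathcal{B}_\rho$, at most $C\Lambda^{-1/2+\alpha'}\big(\Lambda\rho^3+\Lambda^{3/4}(\ln\Lambda)\rho^2\big)=C\big(\Lambda^{-2+2\alpha+\alpha'}+(\ln\Lambda)\Lambda^{-1+\alpha+\alpha'}\big)\rho$. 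Each factor $\Lambda^{-1/2+\alpha'}$, $\Lambda^{-2+2\alpha+\alpha'}$, $(\ln\Lambda)\Lambda^{-1+\alpha+\alpha'}$ tends to $0$ as $\Lambda\to\infty$ precisely because $\alpha'<\min\{\alpha,1-\alpha\}$, so $\mathcal{T}$ maps $\mathcal{B}_\rho$ into itself once $\Lambda$ is large.

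The contraction estimate is identical in spirit: $Q$ is linear, so $Q(\varphi)-Q(\psi)=Q(\varphi-\psi)$ is controlled by (\ref{eqBestim}), while (\ref{eqGen44}) gives on $\mathcal{B}_\rho$ that $\|N(\varphi)-N(\psi)\|_{L^\infty(I_\Lambda)}\leq C\big(\Lambda\rho^2+\Lambda^{3/4}(\ln\Lambda)\rho\big)\|\varphi-\psi\|_\star$; after applying $\mathcal{L}^{-1}$ one recovers the same vanishing factors, so $\mathcal{T}$ is a contraction of $\mathcal{B}_\rho$ with constant $\leq\frac12$ for large $\Lambda$. Banach's fixed point theorem then furnishes a unique $\varphi\in\mathcal{B}_\rho$ solving (\ref{eqExInBas}), and the asserted estimate is exactly the statement $\varphi\in\mathcal{B}_\rho$. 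For the continuous dependence on $\tilde B$ at fixed $\Lambda$, note that $\tilde B$ enters $\mathcal{T}$ only through $(w_{1,in},w_{2,in})=(v_{1,in},v_{2,in})+\tilde B(E_1,E_2)$, hence through $R$, $Q$, $N$, which depend continuously — indeed smoothly — on $\tilde B$ in the relevant norms, uniformly on $\mathcal{B}_\rho$; writing $\mathcal{T}_{\tilde B}$, $\varphi^{\tilde B}$ for the corresponding map and fixed point and decomposing $\varphi^{\tilde B}-\varphi^{\tilde B'}=(\mathcal{T}_{\tilde B}(\varphi^{\tilde B})-\mathcal{T}_{\tilde B}(\varphi^{\tilde B'}))+(\mathcal{T}_{\tilde B}(\varphi^{\tilde B'})-\mathcal{T}_{\tilde B'}(\varphi^{\tilde B'}))$, the first term is $\leq\frac12\|\varphi^{\tilde B}-\varphi^{\tilde B'}\|_\star$ and the second tends to $0$ as $\tilde B'\to\tilde B$, which after absorption yields continuity in $C^1(\overline{I_\Lambda})$.

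I do not expect a genuine obstacle: the linear inversion result Corollary \ref{corlinBasic} and the a-priori nonlinear bounds (\ref{eqBestim})--(\ref{eqGen44}) carry the entire weight of the argument. The one thing that must be handled with care is the choice $\alpha'<\min\{\alpha,1-\alpha\}$, ensuring that the $\Lambda$-superlinear coefficients appearing in (\ref{eqNestim}) and (\ref{eqGen44}) are beaten by the $\Lambda^{-1/2+\alpha'}$ gain of $\mathcal{L}^{-1}$; the fact that this is possible for every $\alpha\in(0,1)$ is what makes the proposition valid in the claimed range.
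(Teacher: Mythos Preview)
Your argument is correct and is precisely the ``standard application of the contraction mapping principle'' the paper invokes without detail; the bookkeeping with the auxiliary exponent $\alpha'<\min\{\alpha,1-\alpha\}$ is exactly what is needed to make Corollary~\ref{corlinBasic} beat the $\Lambda$-powers in (\ref{eqBestim})--(\ref{eqGen44}). The one minor deviation is in the continuous dependence on $\tilde B$: you obtain it directly from the contraction estimate, whereas the paper argues via Arzel\`a--Ascoli compactness combined with the uniqueness assertion; both are standard and equally valid.
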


We point out that the aforementioned continuous dependence on $\tilde{B}$ can be proven easily as follows. Let
$\tilde{B}_n$ satisfy (\ref{eqBtildeConstr}), for fixed $\Lambda$ as in the above proposition, and $\tilde{B}_n\to \tilde{B}_\infty$
as $n\to \infty$. We denote by $\left(\varphi_{1,n},\varphi_{2,n} \right)$ and $\left(\varphi_{1,\infty},\varphi_{2,\infty} \right)$
the solutions of (\ref{eqExInBas}) corresponding to $\tilde{B}_n$ and $\tilde{B}_\infty$ respectively, as provided by the first part of the above proposition.
Then, thanks to Arzela-Ascoli's theorem, passing to a subsequence if necessary, and utilizing the uniqueness assertion of the aforementioned proposition, we find that
$\varphi_{i,n}\to \varphi_{i,\infty}$ in $C^1(\overline{I_\Lambda})$ as $n\to \infty$, $i=1,2$. Finally, by employing once more the uniqueness property of $\left(\varphi_{1,\infty},\varphi_{2,\infty} \right)$, we deduce that the previous convergence holds for the original sequence.
\subsubsection{Some preliminary positivity and monotonicity properties of the inner genuine solution
$(\textbf{v}_{1,in},\textbf{v}_{2,in})$}\label{subsubsecMonotIn}
It is  clear from the construction of the refined inner approximate solution $(w_{1,in},w_{2,in})$ and Proposition \ref{proExistInn} that, given any $L>1$, there exists $c_L>0$
such that
\begin{equation}\label{eqv2posIn}
\textbf{v}_{2,in}\geq c_L \Lambda^{-\frac{1}{4}}\ \ \textrm{and} \
\  -\textbf{v}_{2,in}'\geq c_L \ \ \textrm{on}\ \left[-(\ln
\Lambda)\Lambda^{-\frac{1}{4}},L\Lambda^{-\frac{1}{4}}\right],
\end{equation}
provided that $\Lambda>0$ is sufficiently large. On the other
side, we observe that $\textbf{v}_{2,in}$ satisfies a linear equation of the form
\begin{equation}\label{eqv2Lin}
-v''+P(z)v=0\ \ \textrm{with}\ \ P(z)\geq c \Lambda z^2, \ \
 z\in \left(L\Lambda^{-\frac{1}{4}},(\ln
\Lambda)\Lambda^{-\frac{1}{4}}\right),
\end{equation}
with $c>0$ independent of both $\Lambda,L$. Unfortunately, it is not clear to us how to use the maximum principle to deduce the positivity and monotonicity
of $\textbf{v}_{2,in}$ in this remaining interval without too much effort. A possible way would be to show that $\textbf{v}_{2,in}'\left((\ln
\Lambda)\Lambda^{-\frac{1}{4}}\right)=w_{2,in}'\left((\ln
\Lambda)\Lambda^{-\frac{1}{4}}\right) <0$. This last task, however, would require us to keep track of the sharp super-exponential decay of the various
functions involved in the construction of $w_{2,in}$.
  Nevertheless, since
  \begin{equation}\label{eq54-}
  \textbf{v}_{2,in}\left((\ln
\Lambda)\Lambda^{-\frac{1}{4}}\right)=w_{2,in}\left((\ln
\Lambda)\Lambda^{-\frac{1}{4}}\right)\stackrel{(\ref{eqV1V2asympt}),(\ref{eqPhiHatAsympt})}{=}\Lambda^{-\frac{1}{4}}\mathcal{O}\left(e^{-c(\ln\Lambda)^2}\right)+\Lambda^{-\frac{3}{4}}\mathcal{O}\left(e^{-D(\ln\Lambda)}\right),
  \end{equation}
  for any $D>0$, as $\Lambda \to \infty$,
 we deduce   by Proposition \ref{proExistInn} (used mildly only at $z=\Lambda^{-\frac{1}{4}}$), relation (\ref{eqv2Lin})  and a   barrier
argument  that
\begin{equation}
\label{eqEmvo}
\left|\textbf{v}_{2,in}(z)\right|\leq C \Lambda^{-\frac{1}{4}} e^{-c\Lambda^{\frac{1}{2}}z^2}+\mathcal{O}(\Lambda^{-\infty}),
\ \ z\in \left[\Lambda^{-\frac{1}{4}}, (\ln \Lambda)\Lambda^{-\frac{1}{4}}\right].\end{equation}
In turn, by (\ref{eqv2Lin}) and a standard interpolation argument, we can easily  infer that
\begin{equation}\label{eqmonot2}
\textbf{v}_{2,in}'\left((\ln
\Lambda)\Lambda^{-\frac{1}{4}}\right)=\mathcal{O}(\Lambda^{-\infty})\ \ \textrm{as}\ \ \Lambda \to \infty.
\end{equation}
The above two estimates, and the analogous ones for $\textbf{v}_{1,in}$, will play a pivotal role in 'extending' $(\textbf{v}_{1,in},\textbf{v}_{2,in})$ to a heteroclinic solution to (\ref{eqEqGen})-(\ref{eqBdryGen}),
which can then easily be shown to have positive components with the right monotonicity properties.

\subsection{Adjusting the Hamiltonian constant of the inner genuine solution}\label{subsecAdjHamSolInn} From the calculations of
Subsection \ref{subsecAdjHam1} and Proposition \ref{proExistInn}, it follows that,
if $\alpha$ therein is inside $(0,1/4)$ and $\tilde{B}$ satisfies (\ref{eqBtildeConstr}), the equation for $\tilde{B}$
such that the Hamiltonian constant of the exact solution $(\textbf{v}_{1,in},\textbf{v}_{2,in})$
 of the inner problem  is
equal to $-\psi_0^2/2$ has the form
\[
2 \psi_0^2\tilde{B}\Lambda^\frac{1}{4}+\Lambda^{-1+\alpha}h(\Lambda,\tilde{B})=0,
\]
where the function $h$ is uniformly bounded in $\Lambda$ and continuous.
Consequently, by the Bolzano-Weistrass theorem, there exists at least one
\begin{equation}\label{eqBtilde}
\tilde{B}=\mathcal{O}\left(\Lambda^{-\frac{5}{4}+\alpha}\right)\ \ \textrm{as}\ \ \Lambda \to \infty,
\end{equation}
which satisfies the above equation ($\alpha\in (0,1/4)$ is still as in Proposition \ref{proExistInn}).
 Clearly, our working assumption (\ref{eqBtildeConstr}) is
satisfied  for $\alpha>0$ sufficiently small.
\section{Solution of the outer problem}\label{secSolOutReal}
In this section, we will construct a symmetric solution
$(\textbf{v}_{1,out},\textbf{v}_{2,out})$ to  system
(\ref{eqEqGen}) outside of the interval $I_\Lambda$ which, however,
agrees on $\partial I_\Lambda$ with the already constructed
solution $(\textbf{v}_{1,in},\textbf{v}_{2,in})$ of the inner
problem (in the $C^0$ sense) and satisfies the desired asymptotic
behaviour in (\ref{eqBdryGen}).
\subsection{A refined outer approximate solution $(w_{1,out},w_{2,out})$}\label{subsecRefOut} We first
consider a refinement of the outer approximate solution
$(v_{1,out},v_{2,out})$ that was constructed in Subsection
\ref{subsecOuter}, defined as
\[
w_{1,out}(z)=v_{1,out}(z)+\tilde{\tau}_1U'_1(z+\xi_1),\ \
w_{2,out}(z)=\textbf{v}_{2,in}\left((\ln
\Lambda)\Lambda^{-\frac{1}{4}} \right)\zeta(z),
\]
for $z\geq (\ln \Lambda)\Lambda^{-\frac{1}{4}}$; where
\[
\tilde{\tau}_1=-\frac{(v_{1,out}-\textbf{v}_{1,in})\left((\ln
\Lambda)\Lambda^{-\frac{1}{4}} \right)}{U'\left((\ln
\Lambda)\Lambda^{-\frac{1}{4}}+\xi_1
\right)}\stackrel{(\ref{eqUmonot}), (\ref{eqv1(out-in)}), \textrm{Prop.} \ref{proExistInn},
(\ref{eqBtilde})}{=}\mathcal{O}\left((\ln
\Lambda)\Lambda^{-\frac{5}{4}+\alpha} \right),
\]
as $\Lambda\to
\infty$
($\alpha \in (0,\frac{1}{4})$ still as in Proposition \ref{proExistInn}), and $\zeta \in
C_0^\infty(\mathbb{R})$ is a fixed cutoff function which is equal
to one on $[-1,1]$ (in this regard, keep in mind
(\ref{eqv2(out-in)}), (\ref{eqmonot2})).
Analogously we define $(w_{1,out},w_{2,out})$ for $z\leq -(\ln
\Lambda)\Lambda^{-\frac{1}{4}}$.

By construction, we have
\[
w_{i,out}\left(\pm(\ln \Lambda)\Lambda^{-\frac{1}{4}}
\right)=\textbf{v}_{i,in}\left(\pm(\ln \Lambda)\Lambda^{-\frac{1}{4}}
\right),\ \ i=1,2,
\]
and that the asymptotic behaviour (\ref{eqBdryGen}) is still satisfied.
\subsection{The remainder of the refined outer approximate solution}\label{subsecRemRefOut}
By recalling the proof of Lemma \ref{lemRemOut} and relations
(\ref{eqv2(out-in)}), (\ref{eqmonot2}), we find that the assertion
of the aforementioned lemma continues to hold for
$(w_{1,out},w_{2,out})$; except that in the  equations which were satisfied exactly now there is a remainder left, but  whose absolute value is bounded by a $\Lambda^{-\infty}$-small number
times a fixed, compactly supported function.

\subsection{The perturbation argument}\label{subsecPertOut}
We seek a solution of  system (\ref{eqEqGen}) as
\begin{equation}\label{eqcapt}\begin{array}{c}
 ({\textbf{v}_{1,out},\textbf{v}_{2,out}})=(w_{1,out},w_{2,out})+(\varphi_1,\varphi_2),
\ \ |z|\geq (\ln \Lambda)\Lambda^{-\frac{1}{4}},
 \\
    \\
  \varphi_i\left( \pm (\ln \Lambda)\Lambda^{-\frac{1}{4}}\right)=0,\ \
\lim_{z\to \pm \infty}\varphi_i(z)=0,\ \  i=1,2.
 \\
\end{array}
\end{equation}

Proceeding as in Subsection \ref{subsecPertIn}, we find that now
the corresponding linear operator is
\[
(\mathcal{L}+Q)\left(\begin{array}{c}
  \varphi_1 \\
  \\
  \varphi_2 \\
\end{array} \right)=\left(\begin{array}{c}
  -\varphi_1''+(3w_{1,out}^2-1)\varphi_1+\Lambda w_{2,out}^2 \varphi_1+2\Lambda w_{1,out}w_{2,out}\varphi_2 \\
    \\
  -\varphi_2''+(3w_{2,out}^2-1)\varphi_2+\Lambda w_{1,out}^2 \varphi_2+2\Lambda w_{1,out}w_{2,out}\varphi_1 \\
\end{array} \right).
\]
The invertibility properties of the above operator that we will
need are contained in the following proposition.
\begin{pro}\label{prolinOut}
Given $m>1$, there exist $\Lambda_2,C>0$ such that the boundary value problem
\begin{equation}\label{eqL+B}
(\mathcal{L}+Q)\left(\begin{array}{c}
  \varphi_1 \\
  \varphi_2 \\
\end{array} \right)=\left(\begin{array}{c}
  f_1 \\
  f_2 \\
\end{array}\right),
\ \ \varphi_i\left( (\ln \Lambda)\Lambda^{-\frac{1}{4}}
\right)=0,\ \ \lim_{z\to +\infty}\varphi_i(z)=0,\ \ i=1,2,
\end{equation}
where $f_1,f_2\in C\left[(\ln \Lambda)\Lambda^{-\frac{1}{4}},\infty
\right)$ decay  exponentially fast to zero,  has a unique
solution such that
\[
\|\varphi_1' \|_{L^\infty(J_\Lambda)}+\|\varphi_1
\|_{L^\infty(J_\Lambda)}\leq C \|f_1
\|_{L^\infty(J_\Lambda)}+\Lambda^{-m}\|f_2
\|_{L^\infty(J_\Lambda)},
\]
and \[ (\ln \Lambda)^{-1}\Lambda^{-\frac{1}{4}}\|\varphi_2'
\|_{L^\infty(J_\Lambda)}+\|\varphi_2  \|_{L^\infty(J_\Lambda)}\leq
\Lambda^{-m} \|f_1\|_{L^\infty(J_\Lambda)}+C (\ln
\Lambda)^{-2}\Lambda^{-\frac{1}{2}} \|f_2\|_{L^\infty(J_\Lambda)},
\] where
\[
J_\Lambda=\left((\ln \Lambda)\Lambda^{-\frac{1}{4}},\infty
\right),
\]
provided that $\Lambda\geq \Lambda_2$. An analogous estimate holds also in the negative outer region.
\end{pro}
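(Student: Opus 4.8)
The plan is to decouple the $2\times2$ system, exploiting that the off-diagonal coupling is transcendentally small on $J_\Lambda$. Indeed, $w_{2,out}(z)=\textbf{v}_{2,in}((\ln\Lambda)\Lambda^{-1/4})\zeta(z)$, and by (\ref{eq54-}) the boundary value $\textbf{v}_{2,in}((\ln\Lambda)\Lambda^{-1/4})$ is $\mathcal{O}(\Lambda^{-\infty})$; hence $\Lambda w_{2,out}^2$ and $2\Lambda w_{1,out}w_{2,out}$ are $\mathcal{O}(\Lambda^{-\infty})$, uniformly on $J_\Lambda$. Thus, up to $\mathcal{O}(\Lambda^{-\infty})$-small coupling, the first line of $\mathcal{L}+Q$ is a perturbation of the scalar operator $-\partial_{zz}+3U_1^2(z+\xi_1)-1$, while the second line is $-\partial_{zz}$ plus the large coercive potential $\Lambda w_{1,out}^2-1+3w_{2,out}^2$. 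I would first prove sharp a-priori estimates for each scalar problem separately, then recouple by a contraction whose constant is $\mathcal{O}(\Lambda^{-\infty})$; this is what generates the advertised $\Lambda^{-m}$ cross-terms.

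For the $\varphi_1$-equation the key point is the uniform-in-$\Lambda$ invertibility of $-\partial_{zz}+3U_1^2(\cdot+\xi_1)-1$ on $J_\Lambda$, with a Dirichlet condition at the left endpoint and decay at $+\infty$. This rests on Lemma \ref{lemUnondeg} together with the reflection property $U_1'(0)=\psi_0\neq0$ of (\ref{eqGenPsi0}): the only bounded kernel element of the limiting operator on $(0,\infty)$ is $U_1'$, which does \emph{not} vanish at the origin, while the potential tends to $2>0$ at $+\infty$; hence the limiting Dirichlet problem on $(0,\infty)$ is boundedly invertible, and so is the one with left endpoint $(\ln\Lambda)\Lambda^{-1/4}\to0^+$ (most cleanly via a blow-down/compactness contradiction of the type used in the proof of Proposition \ref{proNonSymPro}, the coercivity of the potential at $+\infty$ ruling out loss of mass at infinity). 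Since $w_{1,out}$ differs from $U_1(\cdot+\xi_1)$ by an exponentially decaying $\mathcal{O}((\ln\Lambda)\Lambda^{-3/4})$-term (from (\ref{eqOutDef}), (\ref{eqconstr}) and the choice of $\tilde{\tau}_1$), the genuine potential $3w_{1,out}^2-1$ differs from $3U_1^2(\cdot+\xi_1)-1$ by an $o(1)$ exponentially decaying term, absorbed by a Neumann series. This yields $\|\varphi_1'\|_{L^\infty(J_\Lambda)}+\|\varphi_1\|_{L^\infty(J_\Lambda)}\leq C\|f_1\|_{L^\infty(J_\Lambda)}+\mathcal{O}(\Lambda^{-\infty})\|\varphi_2\|_{L^\infty(J_\Lambda)}$, with exponential decay of $\varphi_1$ inherited from that of $f_1$.

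For the $\varphi_2$-equation I would run a barrier argument in the spirit of the one giving (\ref{eqEmvo}) and (\ref{eqv2Lin}). On $J_\Lambda$ the potential $\Lambda w_{1,out}^2-1+3w_{2,out}^2$ is bounded below by $c\max\{\Lambda z^2,1\}$, which at the left endpoint already equals $c(\ln\Lambda)^2\Lambda^{1/2}\gg1$ and grows thereafter, tending to $\Lambda-1$ at $+\infty$. Comparing with the explicit supersolution built from $e^{-c(\ln\Lambda)\Lambda^{1/4}(z-(\ln\Lambda)\Lambda^{-1/4})}$ in the boundary layer of width $(\ln\Lambda)^{-1}\Lambda^{-1/4}$ adjacent to the left endpoint, and with $C/(\Lambda w_{1,out}^2-1)$ further out, one reads off that the $L^\infty$ bound is governed by the inverse of the potential at its smallest value, giving $\|\varphi_2\|_{L^\infty(J_\Lambda)}\leq C(\ln\Lambda)^{-2}\Lambda^{-1/2}\|f_2\|_{L^\infty(J_\Lambda)}+\mathcal{O}(\Lambda^{-\infty})\|\varphi_1\|_{L^\infty(J_\Lambda)}$. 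The gradient bound then follows from the rescaled interpolation inequality (\ref{eqInterpoNew}) at the natural length scale $(\ln\Lambda)^{-1}\Lambda^{-1/4}$, combined with $\|\varphi_2''\|_{L^\infty(J_\Lambda)}\leq C\Lambda\|\varphi_2\|_{L^\infty(J_\Lambda)}+\|f_2\|_{L^\infty(J_\Lambda)}+\mathcal{O}(\Lambda^{-\infty})\|\varphi_1\|_{L^\infty(J_\Lambda)}$, which gives $(\ln\Lambda)^{-1}\Lambda^{-1/4}\|\varphi_2'\|_{L^\infty(J_\Lambda)}\leq C\|\varphi_2\|_{L^\infty(J_\Lambda)}+\cdots$.

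Recoupling, the mutual feedback between the two scalar estimates enters only through the $\mathcal{O}(\Lambda^{-\infty})$ coefficients, so for $\Lambda$ large the associated map is a contraction and one obtains the two stated inequalities, the cross-terms acquiring weights smaller than any prescribed $\Lambda^{-m}$. Existence and uniqueness of the solution to (\ref{eqL+B}) then follow as elsewhere in the paper: the operator $\mathcal{L}+Q$ is self-adjoint (its potential matrix is symmetric), so it suffices to solve the Dirichlet problem on the truncated intervals $((\ln\Lambda)\Lambda^{-1/4},R)$, where the same a-priori estimates hold uniformly in $R$, and then let $R\to\infty$, using the exponential decay of $f_1,f_2$ to control the tails; uniqueness is immediate from the a-priori bounds. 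The negative outer region is treated identically. I expect the main obstacle to be the sharp $\varphi_2$ estimate: fixing the exact weight $(\ln\Lambda)^{-2}\Lambda^{-1/2}$ demands a barrier that faithfully tracks the growth of the potential from $(\ln\Lambda)^2\Lambda^{1/2}$ at the left endpoint up to order $\Lambda$ at infinity, together with a check that the left endpoint is indeed where the worst constant arises; a secondary technical point is the uniform-in-$\Lambda$ invertibility of the $\varphi_1$-operator as its left endpoint collapses onto the origin, which hinges entirely on $\psi_0\neq0$.
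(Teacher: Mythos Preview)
Your proposal is correct and follows essentially the same route as the paper: decouple via the transcendental smallness of $w_{2,out}$ on $J_\Lambda$, handle $\varphi_1$ by a compactness/contradiction argument reducing to the second assertion of Lemma~\ref{lemUnondeg} (i.e.\ $U_1'(0)=\psi_0\neq0$), handle $\varphi_2$ by the coercivity of the potential $\Lambda w_{1,out}^2-1$, and recombine. The only cosmetic differences are that (i) you flag the $\varphi_2$ bound as the delicate part and propose a barrier tracking the growth of the potential, whereas the paper simply uses the uniform lower bound $w_{1,out}\geq c(\ln\Lambda)\Lambda^{-1/4}$ on $J_\Lambda$ and the maximum principle to get $\|\varphi_2\|_{L^\infty}\leq c^{-1}(\ln\Lambda)^{-2}\Lambda^{-1/2}\|f_2\|_{L^\infty}+\cdots$ in one line; and (ii) for existence you truncate to finite intervals and pass to the limit, whereas the paper invokes self-adjointness together with the fact that the continuous spectrum of $\mathcal{L}+Q$ on $J_\Lambda$ is $[\Lambda^2-1,\infty)$, so that the a-priori estimate alone suffices.
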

\begin{proof}
As in Proposition \ref{proNonSymPro}, it is enough to establish the
validity of the asserted a-priori estimates. We note also that the continuous spectrum of $\mathcal{L}+Q$, when defined naturally in $L^2(J_\Lambda)\times L^2(J_\Lambda)$, coincides with the interval $[\Lambda^2-1,\infty)$, see \cite{alamaARMA15} or \cite{hislop}, which does not include zero by (\ref{eqHessian}).

We will first show that there exist constants $\Lambda_1,C>0$ such
that the following a-priori estimate holds: If $\phi \in
C^2(\bar{J_\Lambda})$ and $f \in C(\bar{J_\Lambda})$ satisfy
\[
  -\phi''+(3w_{1,out}^2-1)\phi+\Lambda w_{2,out}^2 \phi=f,\ \ z\in
  J_\Lambda,\]
  \[
 \phi\left((\ln \Lambda)\Lambda^{-\frac{1}{4}}\right)=0,
 \ \ \lim_{z\to \infty}\phi(z)=0, \] for
$\Lambda \geq \Lambda_1$, then
\[
\|\phi'  \|_{L^\infty(J_\Lambda)}+\|\phi
\|_{L^\infty(J_\Lambda)}\leq C \|f \|_{L^\infty(J_\Lambda)}.
\]
A preliminary observation is that, since
\begin{equation}\label{eqw2BoundOut}
\|w_{2,out}\|_{L^\infty(J_\Lambda)}=\mathcal{O}(\Lambda^{-\infty})\ \ \textrm{as}\ \ \Lambda \to \infty,
\end{equation}
it is clearly sufficient to show the a-priori estimate
\[
\|\phi \|_{L^\infty(J_\Lambda)}\leq C \|f
\|_{L^\infty(J_\Lambda)}.
\]
To this end, we will argue by contradiction. So, let us suppose
that there exist $\Lambda_n \to \infty$, $\phi_n \in
C^2(\overline{J_{\Lambda_n}})$ and $f_n \in
C(\overline{J_{\Lambda_n}})$ such that
\[
  -\phi_n''+(3w_{1,out}^2-1)\phi_n+\Lambda_n w_{2,out}^2 \phi_n=f_n,\ \ z\in
  J_{\Lambda_n},  \]
\[
 \phi_n\left((\ln \Lambda_n)\Lambda_n^{-\frac{1}{4}}\right)=0,
 \ \ \lim_{z\to \infty}\phi_n(z)=0, \]
while
\[
\|\phi_n \|_{L^\infty(J_{\Lambda_n})}=1\ \ \textrm{and}\ \ \|f_n
\|_{L^\infty(J_{\Lambda_n})}\to 0.
\]
Let
\[
\tilde{\phi}_n(z)=\phi_n\left(z+(\ln \Lambda_n)\Lambda_n^{-\frac{1}{4}} \right),\ \ \tilde{f}_n(z)=f_n\left(z+(\ln \Lambda_n)\Lambda_n^{-\frac{1}{4}} \right),
\ \ z\geq 0.
\]
Then, we have that
\[
  -\tilde{\phi}_n''+\left[3w_{1,out}^2\left(z+(\ln \Lambda_n)\Lambda_n^{-\frac{1}{4}} \right)-1\right]\tilde{\phi}_n+\Lambda_n w_{2,out}^2\left(z+(\ln \Lambda_n)\Lambda_n^{-\frac{1}{4}} \right) \tilde{\phi}_n=\tilde{f}_n,  \]
$z\in
  [0,\infty)$,
 $\tilde{\phi}_n(0)=0$,
$\lim_{z\to \infty}\tilde{\phi}_n(z)=0$,
while
\[
\|\tilde{\phi}_n \|_{L^\infty(0,\infty)}=1\ \ \textrm{and}\ \ \|\tilde{f}_n
\|_{L^\infty(0,\infty)}\to 0.
\]
Keeping in mind (\ref{eqw2BoundOut}), thanks again to standard
elliptic estimates and the usual diagonal argument,  passing to a
subsequence if necessary, and recalling the construction of
$w_{1,out}$,  we find that
\[
\tilde{\phi}_n \to \tilde{\phi}_\infty\ \ \textrm{in}\ \ C^1_{loc}[0,\infty),
\]
for some $\tilde{\phi}_\infty$ satisfying
\[
-\tilde{\phi}_\infty''+\left(3U_1^2(z)-1 \right)\tilde{\phi}_\infty=0,\ \ z>0;\ \ \tilde{\phi}_\infty(0)=0\ \ \textrm{and}\ \ \|\tilde{\phi}_\infty \|_{L^\infty(0,\infty)}\leq 1.
\]
Moreover, it is easy to see that $\tilde{\phi}_\infty$ is nontrivial since
\[
\left[3w_{1,out}^2-1+\Lambda_n w_{2,out}^2\right]\left(z+(\ln \Lambda_n)\Lambda_n^{-\frac{1}{4}} \right)\to 2,\   \textrm{as} \ z\to
\infty, \ \textrm{uniformly\ in} \ n,
\]
which implies that the points where $|\tilde{\phi}_n|$ attains its maximum
cannot escape at infinity. On the other hand, the first case in Lemma
\ref{lemUnondeg} implies that $\tilde{\phi}_\infty$ is identically equal to zero
which is a contradiction.

Applying the previously proven a-priori estimate to the first
equation of (\ref{eqL+B}), and recalling (\ref{eqw2BoundOut}), we
obtain that
\begin{equation}\label{eqcomb1}
\|\varphi_1' \|_{L^\infty(J_\Lambda)}+\|\varphi_1
\|_{L^\infty(J_\Lambda)}\leq C \|f_1
\|_{L^\infty(J_\Lambda)}+\mathcal{O}(\Lambda^{-\infty})\|\varphi_2
\|_{L^\infty(J_\Lambda)}.
\end{equation}

The situation in the second equation is considerably simpler.
Indeed, observing that
\begin{equation}\label{eqw1boundoutLower}
w_{1,out}(z)\geq c(\ln \Lambda)\Lambda^{-\frac{1}{4}},\ \ z\in
J_\Lambda,
\end{equation}
it follows easily that \[\begin{split} (\ln
\Lambda)^{-1}\Lambda^{-\frac{1}{4}}\|\varphi_2'
\|_{L^\infty(J_\Lambda)}+\|\varphi_2  \|_{L^\infty(J_\Lambda)}\leq \ \ \ \ \ \ \ \ \ \ \ \ \ \ \ \ \ \ \ \ \ \ \ \ \ \ \ \ \ \ \ \ \ \ \ \ \ \ \ \ \ \ \ \ \ \ \ \ \ \ \ \ \ \ \ \  & \\
\mathcal{O}( \Lambda^{-\infty}) \|\varphi_1\|_{L^\infty(J_\Lambda)}+C (\ln
\Lambda)^{-2}\Lambda^{-\frac{1}{2}} \|f_2\|_{L^\infty(J_\Lambda)}.
\end{split}\]

The assertion of the proposition, in the case of the positive outer region, now follows directly by combining
(\ref{eqcomb1}) and the above relation. The corresponding estimate in the negative outer region follows completely analogously.
\end{proof}

Armed with the above proposition and the observation made in
Subsection \ref{subsecRemRefOut}, concerning the remainder left by $(w_{1,out},w_{2,out})$, we can use the contraction
mapping principle to capture the desired $(\varphi_1,\varphi_2)$
in (\ref{eqcapt}) and arrive at the main result of this section.

\begin{pro}\label{proExistOut}
If $\Lambda>0$ is sufficiently large, system (\ref{eqEqGen}), for $|z|\geq (\ln
\Lambda)\Lambda^{-\frac{1}{4}}$, has a
solution $(\emph{\textbf{v}}_{1,out},\emph{\textbf{v}}_{2,out})$
of the form (\ref{eqcapt}) with
\[
\|\varphi_1' \|_{L^\infty\left((\ln
\Lambda)\Lambda^{-\frac{1}{4}},\infty\right)}+\|\varphi_1
\|_{L^\infty\left((\ln
\Lambda)\Lambda^{-\frac{1}{4}},\infty\right)}\leq C
(\ln \Lambda)^2\Lambda^{-\frac{3}{2}},
\]
\[
\|\varphi_2' \|_{L^\infty\left((\ln
\Lambda)\Lambda^{-\frac{1}{4}},\infty\right)}+\|\varphi_2
\|_{L^\infty\left((\ln
\Lambda)\Lambda^{-\frac{1}{4}},\infty\right)}=\mathcal{O}( \Lambda^{-\infty}),
\]
and the analogous estimate is valid for $ z \leq -(\ln
\Lambda)\Lambda^{-\frac{1}{4}}$.
\end{pro}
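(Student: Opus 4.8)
The plan is to follow exactly the scheme already laid out in Subsection~\ref{subsecPertOut}: reformulate the construction of $(\textbf{v}_{1,out},\textbf{v}_{2,out})$ as a fixed-point problem for the perturbation $(\varphi_1,\varphi_2)$ in (\ref{eqcapt}), and solve it by the contraction mapping principle using the a-priori bounds of Proposition~\ref{prolinOut}. Subtracting the equation for the refined outer approximate solution $(w_{1,out},w_{2,out})$ from the system (\ref{eqEqGen}), we see that $(\varphi_1,\varphi_2)$ must solve
\[
(\mathcal{L}+Q)\left(\begin{array}{c}\varphi_1\\ \varphi_2\end{array}\right)=-R(w_{1,out},w_{2,out})-N(\varphi_1,\varphi_2),
\]
with the boundary conditions in (\ref{eqcapt}), where $N$ collects the superlinear terms. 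By Proposition~\ref{prolinOut}, the linear operator $\mathcal{L}+Q$ with these boundary conditions is invertible on each of the two outer half-lines, with the stated (highly anisotropic) bounds: the inverse gains a full order of decay in the first component and a $\Lambda^{-\infty}$ factor in the off-diagonal coupling, while the second component is controlled with the extra smallness factor $(\ln\Lambda)^{-2}\Lambda^{-1/2}$ thanks to the lower bound (\ref{eqw1boundoutLower}) on $w_{1,out}$.

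First I would record the size of the remainder. By Subsection~\ref{subsecRemRefOut}, $R(w_{1,out},w_{2,out})$ has first component bounded by $\mathcal{O}((\ln\Lambda)^2\Lambda^{-3/2})e^{-cz}$ (as in Lemma~\ref{lemRemOut}, with the $\tilde\tau_1$-correction contributing through terms of that order) and second component bounded by a $\Lambda^{-\infty}$-small multiple of a fixed compactly supported function. Feeding this into the linear estimate of Proposition~\ref{prolinOut} produces, for the linear part alone, exactly the target bounds $\|\varphi_1\|+\|\varphi_1'\|\lesssim (\ln\Lambda)^2\Lambda^{-3/2}$ and $\|\varphi_2\|+\|\varphi_2'\|=\mathcal{O}(\Lambda^{-\infty})$. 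Next I would set up the fixed-point map $\mathcal{T}(\varphi_1,\varphi_2)=(\mathcal{L}+Q)^{-1}(-R-N(\varphi_1,\varphi_2))$ on the closed ball
\[
\mathcal{B}=\left\{(\varphi_1,\varphi_2):\ \|\varphi_1\|_{C^1(J_\Lambda)}\leq C_0(\ln\Lambda)^2\Lambda^{-3/2},\ \|\varphi_2\|_{C^1(J_\Lambda)}\leq \Lambda^{-m_0}\right\}
\]
for suitable fixed $C_0$ and large $m_0$, together with the analogous ball on the negative half-line (the two sides decouple since the boundary conditions are imposed separately at $\pm(\ln\Lambda)\Lambda^{-1/4}$ and at $\pm\infty$). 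On $\mathcal{B}$ the nonlinearity $N$, which has the schematic form of quadratic and cubic terms in $\varphi_i$ with coefficients of size $\mathcal{O}(1)$, $\mathcal{O}(w_{i,out})$ or $\mathcal{O}(\Lambda w_{i,out})$, is easily seen to be of strictly higher order than $R$ once one uses that $\|w_{1,out}\|_{L^\infty(J_\Lambda)}=\mathcal{O}(1)$ and $\|w_{2,out}\|_{L^\infty(J_\Lambda)}=\mathcal{O}(\Lambda^{-\infty})$; the only potentially dangerous term, $\Lambda w_{1,out}\varphi_1\varphi_2$ appearing in the second equation, is controlled because every occurrence of $\varphi_2$ carries the factor $\Lambda^{-m_0}$ and, in the estimate for $\varphi_1$, the coupling to $\varphi_2$ is multiplied by the $\Lambda^{-\infty}$ gain from Proposition~\ref{prolinOut}. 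A Lipschitz estimate for $N$ of the same type shows $\mathcal{T}$ is a contraction, so it has a unique fixed point in $\mathcal{B}$, and since $\mathcal{T}(0)=(\mathcal{L}+Q)^{-1}(-R)$ already lies well inside $\mathcal{B}$, the fixed point inherits the asserted bounds.

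The part requiring the most care is the bookkeeping of the two widely different scales between the two components and making sure the off-diagonal couplings in $N$ never degrade the $\Lambda^{-\infty}$ smallness of $\varphi_2$ nor spoil the $(\ln\Lambda)^2\Lambda^{-3/2}$ bound on $\varphi_1$; this is exactly where the anisotropic a-priori estimate of Proposition~\ref{prolinOut}, with its $\Lambda^{-m}$ off-diagonal entries (valid for \emph{any} $m>1$), is indispensable. One technical point worth a line is that the remainder and the nonlinearity must be shown to decay exponentially in $z$ so that Proposition~\ref{prolinOut} applies verbatim; this is immediate since $w_{1,out}$ approaches $1$ exponentially and the cutoff $\zeta$ makes $w_{2,out}$ compactly supported, and the fixed-point iterates preserve exponential decay by the same barrier argument used inside the proof of Proposition~\ref{prolinOut}. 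The estimate on the negative outer region $z\leq -(\ln\Lambda)\Lambda^{-1/4}$ is obtained identically, with the roles of the two components interchanged, and this completes the proof of Proposition~\ref{proExistOut}.
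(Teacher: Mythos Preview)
Your proposal is correct and follows exactly the approach the paper itself takes: the paper's proof consists of the single sentence ``Armed with the above proposition and the observation made in Subsection~\ref{subsecRemRefOut}, concerning the remainder left by $(w_{1,out},w_{2,out})$, we can use the contraction mapping principle to capture the desired $(\varphi_1,\varphi_2)$ in (\ref{eqcapt}),'' and you have simply spelled out the details of that contraction argument (the anisotropic ball, the bookkeeping of the off-diagonal $\Lambda^{-m}$ gains, and the exponential decay needed to apply Proposition~\ref{prolinOut}). There is nothing to add.
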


For future reference, we note that a standard barrier argument yields that
\begin{equation}
\label{eqEmvoOutt}
\left|\textbf{v}_{i,out}(z)\right|\leq \mathcal{O}(\Lambda^{-\infty}) e^{-c(\ln \Lambda)^\frac{1}{2}\Lambda^{\frac{1}{4}}|z|},
\ \ (-1)^i z\geq (\ln \Lambda)\Lambda^{-\frac{1}{4}},\ \ i=1,2.\end{equation}

\section{Existence and nondegeneracy of the heteroclinic orbit: proof of Theorems \ref{thmMain} and \ref{thmNonDeg}}\label{secMainRes}
In this section, we will prove our main result.
%Our main
%observation is the following.
So far, we have solved exactly the system (\ref{eqEqGen}) in the inner zone
$\left(-(\ln \Lambda)\Lambda^{-\frac{1}{4}},(\ln
\Lambda)\Lambda^{-\frac{1}{4}} \right)$ by
$(\textbf{v}_{1,in},\textbf{v}_{2,in})$, and in the outer zone
$|z|> (\ln
\Lambda)\Lambda^{-\frac{1}{4}}$ by its continuous extension
$(\textbf{v}_{1,out},\textbf{v}_{2,out})$.
Furthermore, the asymptotic behaviour in (\ref{eqBdryGen}) is
satisfied. In other words, the continuous and piecewise smooth
pair which is defined as
\begin{equation}\label{eqGenV1ap}
\textbf{v}_{i,ap}(z)=\left\{\begin{array}{ll}
  \textbf{v}_{i,in}(z), & |z|\leq (\ln \Lambda)\Lambda^{-\frac{1}{4}}, \\
  \textbf{v}_{i,out}(z), & |z|\geq (\ln \Lambda)\Lambda^{-\frac{1}{4}}, \\
\end{array} \right.\ \ \ i=1,2,
\end{equation}
is an exact solution to problem (\ref{eqEqGen})-(\ref{eqBdryGen})
with the exception of the two points $\pm (\ln
\Lambda)\Lambda^{-\frac{1}{4}}$. Loosely speaking, the above pair can be considered as a 'caricature' of the desired solution to (\ref{eqEqGen})-(\ref{eqBdryGen}) for large $\Lambda$.  As we will see next, this vague notion can    be made precise.
\subsection{A gluing argument: The global approximate solution $(\textbf{w}_{1,ap},\textbf{w}_{2,ap})$}\label{subsecGluing}
We recall that we have constructed the solution
$(\textbf{v}_{1,in},\textbf{v}_{2,in})$ of the inner problem
%without specific boundary conditions at $\pm (\ln
%\Lambda)\Lambda^{-\frac{1}{4}}$ but
such that its Hamiltonian constant is equal to $-\psi_0^2/2$,
which clearly is that of the aforementioned solutions of the
outer problems. The main observation is that this implies that the
jumps in the derivatives of $\textbf{v}_{1,ap},\textbf{v}_{2,ap}$ are
transcendentally small. Indeed, by the equality of the Hamiltonian
constants at $(\ln \Lambda)\Lambda^{-\frac{1}{4}}$, and the fact
that $\textbf{v}_{i,in}=\textbf{v}_{i,out}$, $i=1,2$, at that
point,   we have that
\[
\left[\textbf{v}_{1,in}'\right]^2+\left[\textbf{v}_{2,in}'\right]^2=\left[\textbf{v}_{1,out}'\right]^2+\left[\textbf{v}_{2,out}'\right]^2\
\ \textrm{at}\ \ (\ln \Lambda)\Lambda^{-\frac{1}{4}}.
\]
In turn, the corresponding estimates to (\ref{eq54-}), (\ref{eqmonot2}) for $\textbf{v}_{1,in}$, and Proposition \ref{proExistOut}, yield that
\[
\left[\textbf{v}_{1,in}'\right]^2-\left[\textbf{v}_{1,out}'\right]^2=\mathcal{O}\left(\Lambda^{-\infty} \right)
\ \textrm{at}\ \ (\ln \Lambda)\Lambda^{-\frac{1}{4}},\ \ \textrm{as}\ \ \Lambda \to \infty.
\]
Consequently, since
$
\textbf{v}_{1,in}'\left((\ln \Lambda)\Lambda^{-\frac{1}{4}} \right)\geq c$ and $\textbf{v}_{1,out}'\left((\ln \Lambda)\Lambda^{-\frac{1}{4}} \right)\geq c$
(keep in mind Propositions \ref{proExistInn} and \ref{proExistOut}), we infer that
\[
\left(\textbf{v}_{1,in}'-\textbf{v}_{1,out}'\right)\left((\ln
\Lambda)\Lambda^{-\frac{1}{4}}\right)=\mathcal{O}\left(\Lambda^{-\infty} \right),\ \ \textrm{as}\ \ \Lambda \to \infty,
\]
as desired. Naturally, the proof of the analogous property  for the second components at $-(\ln
\Lambda)\Lambda^{-\frac{1}{4}}$ is completely analogous.

We are now ready to define our global  $C^1$-smooth approximate solution
to  problem (\ref{eqEqGen})-(\ref{eqBdryGen}) as
\begin{equation}\label{eqGenW1apg}
\textbf{w}_{i,ap}(z)=\textbf{v}_{i,ap}(z)+(s_i)_- e^{-\Lambda^{\frac{1}{4}}\left|z+(\ln
\Lambda)\Lambda^{-\frac{1}{4}}\right|} +(s_i)_+ e^{-\Lambda^{\frac{1}{4}}\left|z-(\ln
\Lambda)\Lambda^{-\frac{1}{4}}\right|},\end{equation}
 where the numbers  \begin{equation}\label{eqQuasispm}(s_i)_\pm=\mathcal{O}\left(\Lambda^{-\infty} \right)\ \ \textrm{as}\ \ \Lambda \to \infty, \ \ i=1,2,\end{equation} are
chosen so that $\textbf{w}_{i,ap}$, $i=1,2$, are
$C^1$ at $\pm(\ln \Lambda)\Lambda^{-\frac{1}{4}}$.
  This approximate solution leaves a remainder in (\ref{eqEqGen}) which is uniformly of order $\mathcal{O}\left(\Lambda^{-\infty} \right)$ (keep in mind, however, that it may have finite jump discontinuities at the two gluing points), while the asymptotic behaviour (\ref{eqBdryGen}) as $z\to \pm \infty$ is fulfilled exactly.

\subsection{Perturbing the global approximate solution to a genuine one}

Even though $(\textbf{w}_{1,ap},\textbf{w}_{2,ap})$ is an extremely good approximate solution, perturbing it to a genuine one by some type of local inversion argument is a subtle task. Indeed, the associated linearized operator
 \begin{equation}\label{eqMcal}
\mathcal{M} \left(\begin{array}{c}
  \varphi_1 \\
  \\
  \varphi_2 \\
\end{array} \right)=\left(\begin{array}{c}
  -\varphi_1''+(3\textbf{w}_{1,ap}^2-1)\varphi_1+\Lambda \textbf{w}_{2,ap}^2 \varphi_1+2\Lambda \textbf{w}_{1,ap}\textbf{w}_{2,ap}\varphi_2 \\
    \\
  -\varphi_2''+(3\textbf{w}_{2,ap}^2-1)\varphi_2+\Lambda \textbf{w}_{1,ap}^2 \varphi_2+2\Lambda \textbf{w}_{1,ap}\textbf{w}_{2,ap}\varphi_1 \\
\end{array} \right)
\end{equation}
 is nearly non-invertible because $(\textbf{w}_{1,ap}',\textbf{w}_{2,ap}')$ is  extremely close to being in the kernel.
Nevertheless,  we will overcome this difficulty by adapting to our setting a well known \emph{variational Lyapunov-Schmidt method} (see \cite{del2008giorgi} and the references therein).

 Naturally, we seek a solution of (\ref{eqEqGen})-(\ref{eqBdryGen}) in the form  \begin{equation}\label{eqdPert}(v_1,v_2)=(\textbf{w}_{1,ap},\textbf{w}_{2,ap})+(\varphi_1,\varphi_2),\end{equation} with fluctuations
satisfying the orthogonality condition
\begin{equation}\label{eqQuasiOrthNew2}
\int_{-\infty}^{\infty}\left(\textbf{w}_{1,ap}'\varphi_1+\textbf{w}_{2,ap}'\varphi_2 \right)dz=0.
\end{equation}
The following proposition, concerning the so-called \emph{linear projected problem}, makes it legitimate to apply the aforementioned Lyapunov-Schmidt method.
\begin{pro}\label{proGenLinearQ}
Given $\beta>0$, there exist constants $\Lambda_3,C>0$ such that if $\Lambda\geq \Lambda_3$ and $(h_1,h_2)\in L^2(\mathbb{R})\times L^2(\mathbb{R})$
with $\|(h_1,h_2)\|_*<\infty$, the problem
\begin{equation}\label{eqQuasiEq}
\mathcal{M}\left(\begin{array}{c}
  \phi_1 \\

  \phi_2 \\
\end{array} \right)=\left(\begin{array}{c}
 h_1 \\

 h_2 \\
\end{array} \right)+c_\Lambda \left(\begin{array}{c}
 \textbf{w}_{1,ap}' \\

 \textbf{w}_{2,ap}' \\
\end{array} \right),
\end{equation}
where the linear operator $\mathcal{M}$ is as in (\ref{eqMcal}), has a unique solution
$(\phi_1,\phi_2)\in H^2(\mathbb{R})\times H^2(\mathbb{R})$ and $c_\Lambda \in \mathbb{R}$ such that
\begin{equation}\label{eqQuasiOrth}
\int_{-\infty}^{\infty}\left(\textbf{w}_{1,ap}'\phi_1+\textbf{w}_{2,ap}'\phi_2 \right)dz=0
\end{equation}
and
\begin{equation}\label{eqGenAprQuasiNovel}
 \sum_{i=1}^{2}\|\phi_i\|_{L^\infty(\mathbb{R})}
  \leq C\Lambda^\frac{\beta}{2}\|(h_1,h_2)\|_*,
\end{equation}
where $\| \cdot \|_*$ stands for the weighted norm
\[
\|(h_1,h_2)\|_*=\sum_{i=1}^{2}\| \left( \Lambda^\frac{1+\beta}{2} |z|^{2+2\beta}+1 \right)h_i\|_{L^\infty(\mathbb{R})}.
\]
\end{pro}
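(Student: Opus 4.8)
The plan is to build the solution $(\phi_1,\phi_2)$ and the Lagrange multiplier $c_\Lambda$ by solving the projected problem separately in the inner interval $I_\Lambda$ and in the two outer half-lines, then gluing. Since $\mathcal{M}$ is self-adjoint on $H^2(\mathbb{R})\times H^2(\mathbb{R})$ and $(\textbf{w}_{1,ap}',\textbf{w}_{2,ap}')$ is a fixed smooth pair that (by the remainder estimate $\mathcal{O}(\Lambda^{-\infty})$ on the approximate solution) nearly solves the homogeneous equation, the Fredholm-type picture is: the only near-kernel direction is spanned by $(\textbf{w}_{1,ap}',\textbf{w}_{2,ap}')$, and the multiplier $c_\Lambda$ is exactly what is needed to make $(h_1,h_2)+c_\Lambda(\textbf{w}_{1,ap}',\textbf{w}_{2,ap}')$ orthogonal to that near-kernel. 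Concretely, I would first \emph{define} $c_\Lambda$ by
\[
c_\Lambda=-\frac{\int_{-\infty}^{\infty}\left(\textbf{w}_{1,ap}'h_1+\textbf{w}_{2,ap}'h_2 \right)dz}{\int_{-\infty}^{\infty}\left(|\textbf{w}_{1,ap}'|^2+|\textbf{w}_{2,ap}'|^2 \right)dz},
\]
noting that the denominator is bounded below by a positive constant independent of $\Lambda$ (it converges to $\int |U_1'|^2+\int|U_2'|^2$), and that by the weighted bound $|h_i(z)|\le \|(h_1,h_2)\|_*\,(\Lambda^{(1+\beta)/2}|z|^{2+2\beta}+1)^{-1}$ together with the exponential/Gaussian decay of $\textbf{w}_{i,ap}'$ one gets $|c_\Lambda|\le C\|(h_1,h_2)\|_*$. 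Call the modified right-hand side $\tilde h_i=h_i+c_\Lambda\textbf{w}_{i,ap}'$; it still satisfies $\|(\tilde h_1,\tilde h_2)\|_*\le C\|(h_1,h_2)\|_*$.

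Next I would solve $\mathcal{M}(\phi_1,\phi_2)=(\tilde h_1,\tilde h_2)$ by a contradiction/compactness argument in the spirit of Proposition~\ref{proNonSymPro} and Proposition~\ref{prolinOut}. Suppose no constant $C$ works: there are $\Lambda_n\to\infty$ and $(\phi_{1,n},\phi_{2,n})$, $(\tilde h_{1,n},\tilde h_{2,n})$ with $\sum_i\|\phi_{i,n}\|_{L^\infty}=1$, $\Lambda_n^{-\beta/2}\gg\|(\tilde h_{1,n},\tilde h_{2,n})\|_*$, and the orthogonality (\ref{eqQuasiOrth}) holding. One distinguishes where the $L^\infty$-norm is (essentially) attained. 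If it is attained in the outer region $|z|\gtrsim(\ln\Lambda_n)\Lambda_n^{-1/4}$, then after translating to the boundary point and passing to a $C^1_{loc}$ limit one lands, by the construction of $\textbf{w}_{i,ap}$ and the argument already used in the proof of Proposition~\ref{prolinOut}, on a bounded solution of $-\tilde\phi_\infty''+(3U_1^2-1)\tilde\phi_\infty=0$ on $(0,\infty)$ with $\tilde\phi_\infty(0)=0$, hence $\tilde\phi_\infty\equiv0$ by Lemma~\ref{lemUnondeg}; combined with the coercivity $3w_{1,out}^2-1+\Lambda w_{2,out}^2\to 2$ at infinity (so the maximizing points cannot escape) this contradicts $\|\phi_{i,n}\|=1$. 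If instead the norm is attained in the inner zone, one rescales by $x=\Lambda_n^{1/4}z$ and passes to a $C^2_{loc}(\mathbb{R})$ limit solving $L(\Phi_{1,\infty},\Phi_{2,\infty})=0$; by Proposition~\ref{proBerestNondegen} this limit is $\lambda(V_1',V_2')$. Here is where the orthogonality condition (\ref{eqQuasiOrth}) is decisive: since $\textbf{w}_{i,ap}'$ is, to leading order on $I_\Lambda$, a constant multiple of $V_i'(x)$ (from $w_{i,in}\approx\mu\Lambda^{-1/4}V_i$, so $w_{i,in}'\approx V_i'$), passing (\ref{eqQuasiOrth}) to the limit after the rescaling forces $\lambda=0$. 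The matching between the inner regime (where $\phi_{i,n}$ rescales to $\lambda V_i'\to0$) and the outer regime (where it tends to the linear-profile solution, necessarily $\equiv0$ by Lemma~\ref{lemUnondeg}) is then handled exactly as in the intermediate-zone argument of Proposition~\ref{proNonSymPro}, yielding a contradiction in every case. This proves the a-priori bound $\sum_i\|\phi_i\|_{L^\infty(\mathbb{R})}\le C\Lambda^{\beta/2}\|(\tilde h_1,\tilde h_2)\|_*\le C\Lambda^{\beta/2}\|(h_1,h_2)\|_*$. The loss of the factor $\Lambda^{\beta/2}$ comes from controlling the weighted $L^1$ contribution of $\tilde h_i$ over the inner interval (length $\sim(\ln\Lambda)\Lambda^{-1/4}$) when converting the weighted $L^\infty$ norm into the estimates of Proposition~\ref{proNonSymPro}/Corollary~\ref{corlinBasic}; one then distributes the weight $\Lambda^{(1+\beta)/2}|z|^{2+2\beta}$ and checks that $\Lambda^{\beta/2}$ suffices.

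Having the a-priori estimate, existence of $(\phi_1,\phi_2)$ follows from self-adjointness of $\mathcal{M}$ together with the a-priori bound in the usual way: one solves the problem first on bounded intervals $(-n,n)$ with Dirichlet data and the same projection (the finite-dimensional solvability condition is satisfied by the choice of $c_\Lambda$, up to a controlled correction), obtains uniform bounds from the a-priori estimate, and passes to the limit $n\to\infty$ via the diagonal-compactness argument used repeatedly above, checking $H^2$-regularity from the equation. Uniqueness of the pair $(\phi_1,\phi_2,c_\Lambda)$ subject to (\ref{eqQuasiOrth}) is immediate from the a-priori bound applied to a difference (which has zero right-hand side, hence zero $c_\Lambda$, hence vanishes). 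The main obstacle, and the only genuinely delicate point, is the inner compactness step: one must verify that the projection condition (\ref{eqQuasiOrth}), which is stated for the \emph{global} weight-free $L^2$ pairing against $(\textbf{w}_{1,ap}',\textbf{w}_{2,ap}')$, really does kill the $(V_1',V_2')$-component of the rescaled inner limit — this requires knowing that the outer contributions to that integral are negligible compared to the inner one (true because $\textbf{w}_{i,ap}'\to U_i'$ is $O(1)$ on a set of length $O(1)$ while on $I_\Lambda$ the rescaled variable stretches the integral) and that the leading inner behaviour of $\textbf{w}_{i,ap}'$ is exactly proportional to $V_i'$, both of which follow from the explicit form of $(w_{1,in},w_{2,in})$ in Definition~\ref{definner} and the estimates of Section~\ref{secSolInner}.
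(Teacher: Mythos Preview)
There is a genuine gap in your contradiction argument, and it concerns precisely the point you flag as ``the only genuinely delicate point.''

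Your dichotomy ``maximum in the outer region vs.\ maximum in the inner region'' misidentifies both limits. In the outer regime, after passing to the limit in $C^1_{loc}(\mathbb{R}\setminus\{0\})$ you land on a bounded solution of $-\phi_{i,\infty}''+(3U_i^2-1)\phi_{i,\infty}=0$ on the half-line $(-1)^i z<0$, but there is \emph{no} boundary condition forcing $\phi_{i,\infty}(0)=0$: the global problem has no Dirichlet data at $(\ln\Lambda)\Lambda^{-1/4}$. Lemma~\ref{lemUnondeg} only tells you $\phi_{i,\infty}=a_i U_i'$ for some $a_i\in\mathbb{R}$, not that $a_i=0$. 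So the outer limit alone does not give a contradiction.

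More seriously, your claim that the orthogonality condition kills the inner coefficient $b$ is backwards. The integral $\int_{\mathbb{R}}(\textbf{w}_{1,ap}'\phi_1+\textbf{w}_{2,ap}'\phi_2)\,dz$ is \emph{dominated by the outer region}: on $I_\Lambda$ the integrand is $O(1)$ but the interval has length $\sim(\ln\Lambda)\Lambda^{-1/4}\to 0$, whereas on each outer half-line $\textbf{w}_{i,ap}'\to U_i'$ and $\phi_{i,n}\to a_iU_i'$ over a set of length $O(1)$. Hence passing the orthogonality to the limit yields the \emph{outer} relation
\[
a_1\int_0^\infty (U_1')^2\,dz + a_2\int_{-\infty}^0 (U_2')^2\,dz = 0,
\]
not $b=0$. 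The contradiction then requires a third ingredient you are missing: an intermediate-zone estimate (analogous to the matching step in Proposition~\ref{proNonSymPro}) showing that the outer profile $a_iU_i'$ and the inner profile $bV_i'$ agree where they overlap, which forces $a_1=b=a_2$. Only the combination of these three relations gives $a_1=a_2=b=0$.

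A secondary issue: defining $c_\Lambda$ at the outset by orthogonalising the \emph{right-hand side} to $(\textbf{w}_{1,ap}',\textbf{w}_{2,ap}')$ does not produce a solution $\phi$ satisfying the orthogonality~(\ref{eqQuasiOrth}), because $\mathcal{M}(\textbf{w}_{1,ap}',\textbf{w}_{2,ap}')$ is only $\mathcal{O}(\Lambda^{-\infty})$, not zero. The paper instead establishes the a-priori estimate first for $c_\Lambda=0$ (with the sharper constant $C\Lambda^{-1/2}$), then tests the full equation against $(\textbf{w}_{1,ap}',\textbf{w}_{2,ap}')$---carefully handling the $\mathcal{O}(\Lambda^{-\infty})$ jumps of $\textbf{w}_{i,ap}''$ at $\pm(\ln\Lambda)\Lambda^{-1/4}$ when integrating by parts---to get $|c_\Lambda|\le C\|(h_1,h_2)\|_*+\mathcal{O}(\Lambda^{-\infty})\sum_i\|\phi_i\|_{L^\infty}$, and closes. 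The loss $\Lambda^{\beta/2}$ arises from estimating $\|c_\Lambda(\textbf{w}_{1,ap}',\textbf{w}_{2,ap}')\|_*$, which is where the power weight (rather than an exponential one) is essential.
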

\begin{proof}
The proof will be divided into three steps.

\textbf{Step 1.} We will first establish the validity of the stronger a-priori estimate \begin{equation}\label{eqGenAprQuasi}
\sum_{i=1}^{2}\|\phi_i\|_{L^\infty(\mathbb{R})}
\leq C\Lambda^{-\frac{1}{2}}\|(h_1,h_2)\|_*,
\end{equation} when the constant $c_\Lambda$ in
(\ref{eqQuasiEq}) is equal to zero. To this end, as usual, we will argue by contradiction. So, as in Proposition \ref{proNonSymPro}, let us suppose that there are $\Lambda_n \to \infty$,
$(\phi_{1,n},\phi_{2,n})\in H^2(\mathbb{R})\times H^2(\mathbb{R})$, $(h_{1,n},h_{2,n})\in L^2(\mathbb{R})\times L^2(\mathbb{R})$ satisfying
\begin{equation}\label{eqNonEq}\left\{
	\begin{array}{c}
	-\phi_{1,n}''+(3\textbf{w}_{1,ap}^2-1)\phi_{1,n}+\Lambda_n \textbf{w}_{2,ap}^2 \phi_{1,n}+2\Lambda_n \textbf{w}_{1,ap}\textbf{w}_{2,ap}\phi_{2,n}= h_{1,n}, \\
	\\
	-\phi_{2,n}''+(3\textbf{w}_{2,ap}^2-1)\phi_{2,n}+\Lambda_n \textbf{w}_{1,ap}^2 \phi_{2,n}+2\Lambda_n \textbf{w}_{1,ap}\textbf{w}_{2,ap}\phi_{1,n}= h_{2,n}, \\
	\end{array}\ \ z\in \mathbb{R},
	\right.
	\end{equation}
	and
	\begin{equation}\label{eqNonOrth}
	\int_{-\infty}^{\infty}\left(\textbf{w}_{1,ap}' \phi_{1,n}+\textbf{w}_{2,ap}' \phi_{2,n} \right)dz=0,
	\end{equation} while
\begin{equation}\label{eqQuasiContra}
\|\phi_{1,n}\|_{L^\infty(\mathbb{R})}=1,\ \ \|\phi_{2,n}\|_{L^\infty(\mathbb{R})}\leq 1,\ \  \Lambda_n^{-\frac{1}{2}}\sum_{i=1}^{2}\| \left( \Lambda_n^\frac{1+\beta}{2} |z|^{2+2\beta}+1 \right)h_{i,n}\|_{L^\infty(\mathbb{R})}\to 0.
\end{equation}

We will first show  that
\begin{equation}\label{eqQuasiAlg}
\left|\phi_{i,n}(z) \right|\leq 2 \frac{\|\phi_{i,n}\|_{L^\infty(\mathbb{R})}}{1+\Lambda_n^\frac{1+\beta}{2} |z|^{2+2\beta}},\ \ (-1)^iz\geq \Lambda_n^{-\frac{1}{4}},\ \ i=1,2.
\end{equation}
To this end, the main observation is that, owing to the first equation in (\ref{eqNonEq}),     $\phi_{1,n}$ satisfies a linear inhomogeneous equation of the form
\begin{equation}\label{eqEval11}
-\phi_1''+p(z)\phi_1=f(z),\ \ z\leq 0,
\end{equation}
with
\begin{equation}\label{eqEval22}
p(z)\geq c \Lambda_n^{\frac{1}{2}}\ \ \textrm{and}\ \ |f(z)|\leq C\frac{\Lambda_n^\frac{1}{2}}{1+\Lambda_n^\frac{1+\beta}{2} |z|^{2+2\beta}},\ \ z\leq 0,
\end{equation}
(as usual, the generic constants $c,C>0$ do not depend on $n$).
The above relation follows readily from the already established properties of $\textbf{w}_{1,ap},
\textbf{w}_{2,ap}$ and (\ref{eqQuasiContra}).
Let us just point out that special attention should be paid in showing that
\[
\left|\textbf{w}_{1,ap}
\textbf{w}_{2,ap} \right|\leq C\Lambda_n^{-\frac{1}{2}} e^{c\Lambda_n^\frac{1}{4}z},\ \ z\leq -\Lambda_n^{-\frac{1}{4}},
\]
which essentially follows by combining the estimate
\[
c\Lambda^{-\frac{1}{4}} \leq \textbf{w}_{2,ap}(z)\leq C(\Lambda^{-\frac{1}{4}}+|z|), \ \  z\leq 0,
\]
with the analog of (\ref{eqEmvo}) for $\textbf{v}_{1,in}$, and (\ref{eqEmvoOutt}).
Analogous considerations apply to $\phi_{2,n}$.
The desired estimate (\ref{eqQuasiAlg}) follows from (\ref{eqEval11})-(\ref{eqEval22}), the corresponding relations for $z\geq 0$, and a barrier argument (see also \cite[pg. 435]{herve1994etude}).

With this preliminary step, we can now return to showing that a contradiction occurs.
Using (\ref{eqNonEq}),  (\ref{eqQuasiContra}), (\ref{eqQuasiAlg}), together with standard elliptic estimates and the
	familiar diagonal argument, passing to a subsequence if necessary, we find that
	\begin{equation}\label{eqNonPhinToInfty}
	\phi_{i,n}\to \phi_{i,\infty} \ \ \textrm{in}\ \ C^1_{loc}\left(\mathbb{R}\setminus \{ 0\}\right),
	\end{equation}
	where \begin{equation}\label{eqNonPhiApei00}\phi_{i,\infty}(z)=0\ \ \textrm{for}\ \ (-1)^iz>0,\end{equation} while
	\[
	-\phi_{i,\infty}''+\left(3U_i^2(z)-1 \right)\phi_{i,\infty}=0,\ \ (-1)^iz<0,\ \ i=1,2.
	\]
	Since $\phi_{i,\infty}$ is bounded for $z\neq 0$, by Lemma \ref{lemUnondeg} we must have that
	\begin{equation}\label{eqNonPhiApeiU'}\phi_{i,\infty}(z)=a_i U_i'(z),\ \ (-1)^iz<0,\ \ i=1,2,
	\end{equation}
for some $a_1,a_2\in \mathbb{R}$	(the above $a_1,a_2$ are not  related to those in Proposition \ref{proNonSymPro}).
	 Moreover, passing to the limit in the orthogonality relation (\ref{eqQuasiOrth}),  with the help of
Lebesgue's dominated convergence theorem, we obtain that
\begin{equation}\label{eqOrthLebs}
a_1 \int_{0}^{\infty}(U_1')^2dz+a_2\int_{-\infty}^{0}(U_2')^2dz=0.
\end{equation}

Next, motivated by the proof of Proposition \ref{proNonSymPro}, we wish to show that
\begin{equation}\label{eqNonDegInter}
\left|\phi_{i,n}-a_iU_i' \right|\leq \frac{C}{1+\Lambda_n^\frac{\beta}{2}|z|^{2\beta}}+o(1),\ \ i=1,2,
\end{equation}
uniformly for $0\leq (-1)^{i+1}z\leq 1$, as $n\to \infty$.
We will show this only for $i=1$, as the case $i=2$ can be handled similarly.
To this end, dropping for the moment some of the subscripts $n$ to relax the notation, we let
	\begin{equation}\label{eqdrop}
	\psi_1=\phi_1-a_1 U_1'\to 0\ \ \textrm{in} \ \ C^1_{loc}\left(0,\infty\right),
	\end{equation}
	and observe that
	\[
	\begin{array}{rcl}
	-\psi_1''+(3\textbf{w}_{1,ap}^2-1)\psi_1+\Lambda_n \textbf{w}_{2,ap}^2 \psi_1 & = & -2\Lambda_n \textbf{w}_{1,ap}\textbf{w}_{2,ap}\phi_2+h_1-a_1\Lambda_n \textbf{w}_{2,ap}^2U_1'
	\\
	&   &   \\
	&   & -3a_1(\textbf{w}_{1,ap}^2-U_1^2)U_1',
	\end{array}
	\]
	for $z\geq 0$.
It is convenient to write
	\begin{equation}\label{eqNonDecomp}
	\psi_1(z)=\tilde{\psi}_1(z)+\hat{\psi}_1(z) \ \ \textrm{for}\ \ z\in [0,1],
	\end{equation}
	where $\tilde{\psi}_1$ is the unique solution of the following boundary value problem:
	\[
	\left\{\begin{array}{ll}
	\tilde{\psi}_1''=(3\textbf{w}_{1,ap}^2-1)\psi_1+3a_1(\textbf{w}_{1,ap}^2-U_1^2)U_1', & z\in (0,1), \\
	&   \\
	\tilde{\psi}_1( 0)=\tilde{\psi}_1( 1)=0. &
	\end{array}
	\right.
	\]
Concerning $\tilde{\psi}_1$, we observe that standard elliptic estimates imply that
	\[
	\| \tilde{\psi}_1 \|_{H^2(0,1)}\leq C  \|(3\textbf{w}_{1,ap}^2-1)\psi_1+3a_1(\textbf{w}_{1,ap}^2-U_1^2)U_1'\|_{L^2(0,1)}\to 0,
	\]
	where the last limit holds by virtue of (\ref{eqdrop}), the construction of $\textbf{w}_{1,ap}$ and Lebesgue's dominated convergence theorem. Therefore,
	\begin{equation}\label{eqNonC1tilde}
	\| \tilde{\psi}_1 \|_{C^1(0,1)}\to 0.
	\end{equation}
Thus, in view of (\ref{eqdrop}) and (\ref{eqNonC1tilde}), we get that the function $\hat{\psi}_1$ in the decomposition (\ref{eqNonDecomp}) satisfies
	\[
	\hat{\psi}_1\to 0\ \ \textrm{in}\ \ C^1_{loc}(0,1].
	\]
	Then, since $\hat{\psi}_1$ satisfies
	\[
	\hat{\psi}_1''=\Lambda_n \textbf{w}_{2,ap}^2 \psi_1 +2\Lambda_n \textbf{w}_{1,ap}\textbf{w}_{2,ap}\phi_2-h_1+a_1\Lambda_n \textbf{w}_{2,ap}^2U_1',\ \ z\in (0,1),
	\]
similarly	   as in Proposition \ref{proNonSymPro} (also keep in mind the derivation of (\ref{eqEval22})), we obtain that
	\[
	|\hat{\psi}_1(z)|\leq \frac{C}{1+\Lambda_n^\frac{\beta}{2}|z|^{2\beta}}+o(1),\ \ \textrm{uniformly on}\ \ [0,1],\ \ \textrm{as}\ \ n\to \infty.
	\]
The desired estimate (\ref{eqNonDegInter}) now follows immediately from (\ref{eqdrop}), (\ref{eqNonDecomp}), (\ref{eqNonC1tilde}) and the above relation.

On the other side, similarly to the proof of Proposition \ref{proNonSymPro}, passing to a further subsequence if needed, we  get that \begin{equation}\label{eqNonDegInnerB}
	\phi_i(\Lambda_n^{-\frac{1}{4}}x)\to \texttt{b} V'_i(x) \ \ \textrm{in} \ \ C^1_{loc}(\mathbb{R}),\ \ i=1,2,
	\end{equation}
for some $\texttt{b}\in \mathbb{R}$.

By the same arguments that led to (\ref{eqa1b})-(\ref{eqa2b}), we find from (\ref{eqNonDegInter})
and (\ref{eqNonDegInnerB}) that
 \[
 a_1=\texttt{b},\ \ a_2=\texttt{b}.
 \]
 Hence, in view of (\ref{eqOrthLebs}), we get that
 \[a_1=a_2=\texttt{b}=0.\]
Then, by combining (\ref{eqQuasiAlg}), (\ref{eqNonPhinToInfty}), (\ref{eqNonPhiApei00}), (\ref{eqNonPhiApeiU'}), (\ref{eqNonDegInter}) and (\ref{eqNonDegInnerB}), we obtain that
\[
\|\phi_{i,n}\|_{L^\infty(\mathbb{R})}\to 0,\ \ i=1,2,
\]
(note also that, as in Proposition \ref{prolinOut}, the points where $|\phi_{i,n}|$ achieves its maximum cannot escape at infinity).
The above relation contradicts the first relation in (\ref{eqQuasiContra}), and completes the proof of Step 1.

\textbf{Step 2.} We will show that the a-priori estimate (\ref{eqGenAprQuasi}) holds for the full problem (\ref{eqQuasiEq})-(\ref{eqQuasiOrth}).
Testing (\ref{eqQuasiEq}) by $(\textbf{w}_{1,ap}',\textbf{w}_{2,ap}')$ gives that
\begin{equation}\label{eqQuasiNorm}
|c_\Lambda|\leq C \|(h_1,h_2)\|_{*}+C\left|\left<\mathcal{M}\left(\begin{array}{c}
  \phi_1 \\

  \phi_2 \\
\end{array} \right),\left(\begin{array}{c}
 \textbf{w}_{1,ap}' \\

 \textbf{w}_{2,ap}' \\
\end{array} \right) \right>_{L^2(\mathbb{R})\times L^2(\mathbb{R})} \right|,
\end{equation}
where we also used that \begin{equation}
\label{eqchocY}
|\textbf{w}_{i,ap}'|\leq Ce^{-c|z|},\ \  z\in\mathbb{R},\ \ i=1,2.\end{equation}
Unfortunately, since $\textbf{w}_{1,ap}',\textbf{w}_{2,ap}'$ are merely in $H^1(\mathbb{R})$, we cannot use directly the self-adjointness of $\mathcal{M}$
in the last term of the above relation to exploit that
\[
\|\mathcal{M}\left(\begin{array}{c}
 \textbf{w}_{1,ap}' \\

 \textbf{w}_{2,ap}' \\
\end{array} \right)   \|_{L^1(\mathbb{R}\setminus \{\pm (\ln\Lambda)\Lambda^{-\frac{1}{4}}\})\times L^1(\mathbb{R}\setminus \{\pm (\ln\Lambda)\Lambda^{-\frac{1}{4}}\})}=\mathcal{O}(\Lambda^{-\infty}).
\]
 Nevertheless, from  (\ref{eqGenW1apg})-(\ref{eqQuasispm}) and the fact that
\[
\textbf{v}_{i,ap}''\left(\left[\pm (\ln\Lambda)\Lambda^{-\frac{1}{4}} \right]^- \right)=\textbf{v}_{i,ap}''\left(\left[\pm (\ln\Lambda)\Lambda^{-\frac{1}{4}} \right]^+ \right)
\]
(keep in mind (\ref{eqGenV1ap})), we find that the jumps of $\textbf{w}_{i,ap}''$ at $\pm (\ln\Lambda)\Lambda^{-\frac{1}{4}} $
are of order $\mathcal{O}(\Lambda^{-\infty})$ as $\Lambda \to \infty$, $i=1,2$.
Hence, splitting the integral under consideration into three parts, integrating each one by parts using the self-adjointness of $\mathcal{M}$ and the previous observation to estimate the boundary terms, %keeping in mind the discussion in Subsubsection \ref{subsubsecRemGlob},
 we reach the bound
\[
\left<\mathcal{M}\left(\begin{array}{c}
  \phi_1 \\

  \phi_2 \\
\end{array} \right),\left(\begin{array}{c}
 \textbf{w}_{1,ap}' \\

 \textbf{w}_{2,ap}' \\
\end{array} \right) \right>_{L^2(\mathbb{R})\times L^2(\mathbb{R})} =\mathcal{O}(\Lambda^{-\infty})\sum_{i=1}^{2}\|\phi_i\|_{L^\infty(\mathbb{R})} \ \ \textrm{as}\ \Lambda \to \infty.
\]
In turn, via (\ref{eqQuasiNorm}), we obtain that
\[
|c_\Lambda|\leq C \|(h_1,h_2)\|_*+\mathcal{O}(\Lambda^{-\infty})\sum_{i=1}^{2}\|\phi_i\|_{L^\infty(\mathbb{R})}.
\]
On the other hand, applying the conclusion of Step 1 to (\ref{eqQuasiEq})-(\ref{eqQuasiOrth}), and keeping in mind (\ref{eqchocY}), we get that
\[
\sum_{i=1}^{2}\|\phi_i\|_{L^\infty(\mathbb{R})}\leq C\Lambda^{-\frac{1}{2}}\|(h_1,h_2)\|_*+C\Lambda^\frac{\beta}{2} |c_\Lambda|.
\]
The desired a-priori estimate now follows at once by combining the above two relations.
We point out that the main reason for using a power weight instead of the more convenient exponential one, as in Proposition  \ref{proNonSymPro} and Corollary \ref{corlinBasic}, was in order to get an efficient estimate  in the last term of the above relation.

\textbf{Step 3.} We will establish the existence of a unique solution $(\phi_1,\phi_2)\in H^2(\mathbb{R})\times H^2(\mathbb{R})$ and $c_\Lambda \in \mathbb{R}$
to  problem (\ref{eqQuasiEq})-(\ref{eqQuasiOrth}), given $(h_1,h_2)$ as in the assertion of the proposition.
Let $\mathcal{X}$ denote the subspace of $H^2(\mathbb{R})\times H^2(\mathbb{R})$ which consists of pairs $\Phi=(\phi_1,\phi_2)$ satisfying the orthogonality condition (\ref{eqQuasiOrth}).
%Considering the following bilinear form in $\mathcal{X}$:
%\[
%\mathcal{B}(\Phi,\Psi)=\left<\mathcal{M}(\Phi),\Psi\right>_{L^2(\mathbb{R})\times L^2(\mathbb{R})},
%\]
The problem (\ref{eqQuasiEq})-(\ref{eqQuasiOrth}) admits the following weak formulation: find $\Phi \in \mathcal{X}$ such that
\[
\left<\mathcal{M}(\Phi),\Psi\right>_{L^2(\mathbb{R})\times L^2(\mathbb{R})}=\left<H,\Psi\right>_{L^2(\mathbb{R})\times L^2(\mathbb{R})}\ \ \ \forall \ \Psi \in \mathcal{X},
\]
(where $H=(h_1,h_2)$). This weak formulation can then be readily put in the operator form
\[
\mathbb{M}(\Phi)=\hat{H},
\]
where $\mathbb{M}:\mathcal{X}\to \mathcal{X}$ is self-adjoint, and $\hat{H} \in \mathcal{X}$ depends linearly on $H$.
The a-priori estimate of Step 2 implies that, for $\hat{H}=0$, there is only the trivial solution. Consequently, by the self-adjoint property of $\mathbb{M}$,
we infer that the above problem has a solution $\Phi \in \mathcal{X}$ (see also \cite[Lem. 4.2]{lund2015existence}), which is clearly unique. This completes the proof of Step 3 and also of the proposition.
\end{proof}

Armed with the above proposition, we can apply the contraction mapping theorem in these weighted spaces to show that the   \emph{nonlinear projected problem}	
\[
	\left\{
	\begin{array}{c}
		-v_1''+v_1^3- v_1+\Lambda v_2^2 v_1=c_\Lambda \mathbf{w}_{1,ap}', \\
		\\
		-v_2''+v_2^3-v_2+\Lambda v_1^2 v_2=c_\Lambda \mathbf{w}_{2,ap}',
	\end{array}
	\right.
\]
has a solution $(v_1,v_2)$ and $c_\Lambda$ such that
\begin{equation}\label{eqviProj}
v_i=\mathbf{w}_{i,ap}+\varphi_i \ \ \textrm{with}\ \ \varphi_i \in H^2(\mathbb{R})\ \ \textrm{and}\ \
\|\varphi_i\|_{L^\infty(\mathbb{R})}=\mathcal{O}(\Lambda^{-\infty})\ \ \textrm{as} \ \ \Lambda \to \infty,\ \ i=1,2.
\end{equation}
Moreover, the fluctuation $(\varphi_1,\varphi_2)$ satisfies the orthogonality condition (\ref{eqQuasiOrthNew2}),
while the constant $c_\Lambda$ is of order $\mathcal{O}(\Lambda^{-\infty})$ as $\Lambda \to \infty$.
 Then,  elliptic regularity theory  imply that the solution is smooth (up to this moment, we know that $v_1',v_2' \in H^2(\mathbb{R})$). To this end, testing the above nonlinear projected problem with $(v_1',v_2')$, thanks to the gradient structure in the lefthand side, yields that
\[\begin{array}{ccc}
    0 & = & c_\Lambda\sum_{i=1}^{2}\int_{-\infty}^{\infty}\left[\left(\mathbf{w}_{i,ap}' \right)^2+\mathbf{w}_{i,ap}'\varphi_i'\right]dz \\
      &   &   \\
      & = & c_\Lambda\sum_{i=1}^{2}\int_{-\infty}^{\infty}\left[\left(\mathbf{w}_{i,ap}' \right)^2-\mathbf{w}_{i,ap}''\varphi_i\right]dz.
  \end{array}
\]
In turn, using the rough estimates
\[
\int_{-\infty}^{\infty}\left(\mathbf{w}_{i,ap}' \right)^2dz\geq c,\ \ \left|\mathbf{w}_{i,ap}''(z)\right|\leq C\Lambda^\frac{1}{4}e^{-c|z|},\ z\in \mathbb{R},\ i=1,2,
\]
and (\ref{eqviProj}), we can conclude that $c_\Lambda=0$ for $\Lambda$ sufficiently large, as desired.

\subsection{Proof of Theorem \ref{thmMain}}
\subsubsection{The estimates} The asserted estimates in Theorem \ref{thmMain} follow readily by taking into account the construction of the various approximate solutions, the estimates in Propositions \ref{proExistInn}, \ref{proExistOut}, and (\ref{eqviProj}) (the latter relation can  be differentiated once in the natural way). In particular, for the decay estimates (\ref{eqEmvoRRR}) and (\ref{v1expdecay}), keep in mind (\ref{eqEmvo}) and (\ref{eqEmvoOutt}) respectively.
\subsubsection{Positivity, monotonicity and decay properties of the heteroclinic orbit}\label{subsubMonot}
Armed with the previously proven $C^1$-uniform estimates for the constructed heteroclinic solution $(v_1,v_2)$,
we can complete the qualitative analysis of Subsubsection \ref{subsubsecMonotIn}, and thus the  proof of Theorem \ref{thmMain}.

The main observation is that $v_2$
satisfies a linear equation of the form
\[
-v''+\tilde{P}(z)v=0,\ \ z\geq L \Lambda^{-\frac{1}{4}},
\]
with
\[
\tilde{P}(z)\geq \left\{\begin{array}{ll}
c\Lambda z^2, & z\in (L \Lambda^{-\frac{1}{4}},\delta), \\
&   \\
c \Lambda,& z\in [\delta,\infty),
\end{array}
\right.
\]
for some fixed small $\delta>0$ (keep in mind (\ref{eqv2Lin})).
Hence, since $v_2(L \Lambda^{-\frac{1}{4}})\geq c \Lambda^{-\frac{1}{4}}$, $v_2'(L \Lambda^{-\frac{1}{4}})\leq -c$ for large $\Lambda$
(recall (\ref{eqv2posIn})) and $\lim_{z\to \infty}v_2(z)=0$, we deduce by the maximum principle that
\[
v_2>0\ \  \textrm{and}\ \ v_2'<0\ \ \textrm{on} \ \ [L  \Lambda^{-\frac{1}{4}},\infty).\]
In summary, so far we have shown that
\begin{equation}\label{eqv2analog}
v_2>0\ \ \textrm{and}\ \ v_2'<0\ \ \textrm{on}\ \ \left[-(\ln \Lambda) \Lambda^{-\frac{1}{4}},\infty \right).
\end{equation}
In fact, by the use of barriers and standard elliptic estimates, it follows readily that
\begin{equation}\label{eqdecay14}
v_2(z)-\Lambda^{-\frac{1}{4}}v_2'(z)\leq C \Lambda^{-\frac{1}{4}} e^{-c\Lambda^\frac{1}{4}z},\ \ z\geq 0.
\end{equation}
Moreover, the above estimate can be improved for large $z$: Given any fixed $d>0$,
\[
v_2(z)-\Lambda^{-\frac{1}{2}}v_2'(z)\leq Cv_2(d)e^{-c\Lambda^\frac{1}{2}z},\ \ z\geq d.
\]

The previously proven $C^1$-uniform estimates for the convergence of $v_2$ to $U_2$ over $\left(-\infty,-(\ln \Lambda)\Lambda^{-\frac{1}{4}} \right]$
guarantee that the same holds on any fixed interval of the form $[-M,\infty)$, provided that $\Lambda$ is sufficiently large. In particular,
$v_2(-M)\to U_2(-M)$ and $v_2'(-M)\to U_2'(-M)<0$ as $\Lambda \to \infty$. To conclude that $v_2$ is still decreasing in $(-\infty,-M)$, it is enough to apply
the maximum principle to the linear equation that is satisfied by $v_2'$. Indeed, using the analogous property to (\ref{eqv2analog}) for $v_1$, and our previous observations at $-M$, we find that the function $\psi \equiv v_2'$ satisfies
\[
-\psi''+\left(3v_2^2-1+\Lambda v_1^2 \right)\psi=-2\Lambda v_2 v_1 v_1'\leq 0,\ \ z\leq -M;\ \ \psi(-\infty)=0,\  \psi(-M)<0,
\]
with $3v_2^2-1+\Lambda v_1^2>0$ on $(-\infty,-M]$ (having increased the value of $M$ if necessary).
%  Actually, by the use of barriers and standard elliptic estimates in the equation for the %auxilliary function $\rho=v_1^2+(1-v_2)^2$, we readily find that
%\begin{equation}\label{eqdecay2}
%1-v_2(z)-\Lambda^{-\frac{1}{2}}v_2'(z)\leq Ce^{c\Lambda^\frac{1}{2}z},\ \ z\leq -M,
%\end{equation}
%(having increased the value of $M$ if necessary).

Analogously we argue for showing that
\[
 v_1'>0\ \ \textrm{in}\ \ \mathbb{R}.
\]
\begin{rem}\label{remSmooth}
A careful inspection of the proofs reveals that the solution provided by Theorem \ref{thmMain} depends smoothly on $\Lambda$ since there is a version of the contraction mapping theorem for operators depending  on parameters.
\end{rem}

\begin{rem}An effective approach for constructing heteroclinic orbits in singularly perturbed systems of ordinary differential equations is to make use of
\emph{geometric singular perturbation theory} (see \cite{kuhen} and the references therein). In particular, at least heuristically,  the blow-up   problem (\ref{eqBUsystem}) brings to mind the
recent  blow-up approach to this theory,  used to deal with problems involving loss of normal hyperbolicity
(see \cite{schecter2010heteroclinic} and the references therein). However,
we have not been able to put system (\ref{eqEqGen}) in the slow-fast form that is required for the aforementioned machinery to apply. In any case, we believe that the approach of the current paper extends in a natural way to deal with analogous problems in the broader context of elliptic systems of partial differential equations.\end{rem}

\subsection{Nondegeneracy of the heteroclinic: Proof of Theorem \ref{thmNonDeg}}

\begin{proof}[Proof of Theorem \ref{thmNonDeg}]
It has been shown in \cite{alamaARMA15} that  the lowest point in the spectrum of $\textbf{M}$ is $0$ which is a simple eigenvalue
with $(v_1',v_2')$ as the associated eigenfunction.  It was also shown therein that the
continuous spectrum of $\textbf{M}$ coincides with the interval $[\Lambda^2-1,\infty)$. So, it is enough to show that the second eigenvalue $\mu>0$ of $\textbf{M}$ (should it exist) is bounded away from $0$ independently
of large $\Lambda$. To this end, we will argue by contradiction.

Suppose, to the contrary, that there are $\Lambda_n\to \infty$ such that the second eigenvalue $\mu_n>0$ of $\textbf{M}$ exists and satisfies
\[
\mu_n\to 0.
\]
Hence,  there would exist an associated eigenfunction  $(\varphi_{1,n}, \varphi_{2,n})\in H^2(\mathbb{R})\times H^2(\mathbb{R})$ such that
\[\left\{
\begin{array}{c}
  -\varphi_{1,n}''+(3v_1^2-1)\varphi_{1,n}+\Lambda_n v_2^2 \varphi_{1,n}+2\Lambda_n v_1v_2\varphi_{2,n}=\mu_n \varphi_{1,n}, \\
    \\
  -\varphi_{2,n}''+(3v_2^2-1)\varphi_{2,n}+\Lambda_n v_1^2 \varphi_{2,n}+2\Lambda_n v_1v_2\varphi_{1,n}=\mu_n \varphi_{2,n}, \\
\end{array}\ \ z\in \mathbb{R},
\right.
\]
\[
\|\varphi_{1,n}\|_{L^\infty(\mathbb{R})}+\|\varphi_{2,n}\|_{L^\infty(\mathbb{R})}=1,
\]
and
\[
\int_{-\infty}^{\infty}\left(v_1' \varphi_{1,n}+v_2' \varphi_{2,n} \right)dz=0.
\]
Then, by absorbing $\mu_n \varphi_{i,n}$ in the term $(3v_i^2-1)\varphi_{i,n}$, $i=1,2$, the proof of Step 1 in Proposition \ref{proGenLinearQ} goes through to provide a contradiction.
\end{proof}

\section{Uniqueness of solutions: Proof of Theorem \ref{thmUniqNonSym}}\label{secUniq}

In this section, we will prove Theorem \ref{thmUniqNonSym}. The main task   will be to establish the uniqueness (modulo translations) of solutions to (\ref{eqEqGen})-(\ref{eqBdryGen}),  satisfying the natural monotonicity  property \begin{equation}\label{eqMonotFinal}
v_1'(z)>0,\ \ v_2'(z)<0,\ \ z\in \mathbb{R},
\end{equation}
for any $\Lambda$ in the range (\ref{eqHessian}). In particular, the latter monotonicity property is satisfied by stable solutions with positive components (see \cite[Thm. 3.1]{alamaARMA15}), and thus by minimizing ones. To the best of our knowledge, this type of uniqueness was not previously known, even in the case of minimizing solutions (see also \cite[Rem. 1.2]{alamaARMA15} and \cite[Rem. 4.8]{GL}). Once  the aforementioned  uniqueness property  is established, the corresponding assertion   of Theorem \ref{thmUniqNonSym}, where only one of the inequalities in (\ref{eqMonotFinal}) is assumed, will follow immediately  thanks to Lemma \ref{lemHalfMonot} below.
 %In passing, we note that the problem at hand does not seem to fit in the  framework of
%\cite{alikakos2006explicit}, where uniqueness and non-uniqueness issues for heteroclinic connections of related systems were studied  based on techniques from
%complex analysis. Moreover, let us
 We point out that system (\ref{eqEqGen}) is non cooperative, and that in the case $\Lambda <1$, uniqueness for a related problem follows from \cite{ANS}.

%More precisely, we have the following result.

The main result of this section is the following.

\begin{pro}\label{proUniqS}
	If $\Lambda>1$, there exists a unique solution (modulo translations) to (\ref{eqEqGen})-(\ref{eqBdryGen})-(\ref{eqMonotFinal}).
\end{pro}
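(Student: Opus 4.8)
The plan is to run a homotopy (continuation) argument in the parameter $\Lambda \in (1,\infty)$, exploiting the nondegeneracy of monotone solutions that was established (in the asymptotic regime) in Theorem \ref{thmNonDeg}. The first key point is that the nondegeneracy property of Theorem \ref{thmNonDeg} in fact holds for \emph{every} $\Lambda>1$ and \emph{every} solution satisfying (\ref{eqMonotFinal}), not just for large $\Lambda$ and the constructed solution: indeed, if $(v_1,v_2)$ solves (\ref{eqEqGen})-(\ref{eqBdryGen})-(\ref{eqMonotFinal}), then $(v_1',v_2')$ is a positive-then-negative element in the kernel of the linearized operator $\mathbf{M}$, hence it is the principal eigenfunction, so $0$ is the bottom of the spectrum and it is simple. (The cooperative structure of $\mathbf{M}$ after the sign change $v_2\mapsto -v_2$, together with the fact that $(v_1',-v_2')$ is then a strictly positive kernel element, gives simplicity and the spectral gap via a Perron--Frobenius-type argument; the continuous spectrum sits at $[\Lambda^2-1,\infty)$ by \cite{alamaARMA15,hislop}.) Thus every monotone heteroclinic is nondegenerate, uniformly on compact subsets of $(1,\infty)$ after modding out translations.

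Next I would set up the continuation. Fix a reference point $\varphi_1(0)=v_1(0)$ (or impose the orthogonality $\int (v_1'\varphi_1+v_2'\varphi_2)=0$) to kill the translation invariance, and consider the map $\Phi(\mathbf v,\Lambda)$ whose zeros are precisely the (normalized) solutions of (\ref{eqEqGen})-(\ref{eqBdryGen}); by the nondegeneracy just discussed, $D_{\mathbf v}\Phi$ is invertible at every monotone solution, so by the implicit function theorem the set of monotone solutions is, locally in $\Lambda$, a smooth one-parameter branch. Let $\mathcal{S}\subset(1,\infty)$ be the set of $\Lambda$ for which (\ref{eqEqGen})-(\ref{eqBdryGen})-(\ref{eqMonotFinal}) has a \emph{unique} solution modulo translations. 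By Theorem \ref{thmMain} together with Theorem \ref{thmNonDeg} (and the fact that for large $\Lambda$ any monotone solution must, by the same singular-limit compactness used throughout Section \ref{secApprox}, converge to $(\chi_{(0,\infty)}U_1,\chi_{(-\infty,0)}U_2)$ and hence coincide with the constructed one by local uniqueness near that limit), $\mathcal S$ contains a neighbourhood of $+\infty$, so $\mathcal S\neq\emptyset$. Openness of $\mathcal S$ is immediate from the implicit function theorem and nondegeneracy: a unique nondegenerate solution persists uniquely for nearby $\Lambda$, and no new branch can appear nearby since any monotone solution lies on a smooth branch. For closedness, suppose $\Lambda_n\to\Lambda_\infty\in(1,\infty)$ with $\Lambda_n\in\mathcal S$; one needs a priori bounds and compactness to pass to the limit. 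The uniform bound (\ref{eqAlama}) gives $0\le v_{i}\le 1$, standard elliptic estimates give $C^2_{loc}$ compactness, and the Hamiltonian identity (\ref{eqGenHam})-(\ref{eqHamilton}) plus the boundary conditions (\ref{eqBdryGen}) control the solutions at $\pm\infty$ and prevent loss of mass at infinity; the monotonicity (\ref{eqMonotFinal}) is preserved in the limit (it could a priori degenerate to $v_1'\ge0$, $v_2'\le0$, but then the strong maximum principle applied to the equations satisfied by $v_1'$ and $v_2'$ upgrades this back to strict inequalities, exactly as in Subsubsection \ref{subsubMonot}). Hence the limit is again a monotone heteroclinic, it is nondegenerate, and local uniqueness around it forces $\Lambda_\infty\in\mathcal S$. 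Therefore $\mathcal S=(1,\infty)$, which is the assertion.

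The main obstacle I expect is the \emph{closedness} step, and specifically two sub-points: (i) ruling out that a sequence of monotone solutions at $\Lambda_n\to\Lambda_\infty$ ``escapes'' — either by one component spreading out or by the transition region translating to infinity — which is handled by normalizing translations via $v_1(0)$ together with the conserved Hamiltonian forcing $(v_1,v_2)\to(0,1)$ resp. $(1,0)$ at a uniform rate; and (ii) guaranteeing that the limiting solution is still \emph{strictly} monotone so that it lies in the class where nondegeneracy is available, which is the maximum-principle argument sketched above. A secondary subtlety is to make sure the nondegeneracy argument (the adaptation of Step 1 of Proposition \ref{proGenLinearQ}, or the Perron--Frobenius argument) genuinely covers all $\Lambda>1$ rather than only large $\Lambda$; this is where the cooperative reformulation and the explicit location $[\Lambda^2-1,\infty)$ of the essential spectrum do the work. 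Finally, the reduction of the full statement of Theorem \ref{thmUniqNonSym} — where only \emph{one} of the inequalities in (\ref{eqMonotFinal}) is assumed — to Proposition \ref{proUniqS} is deferred to Lemma \ref{lemHalfMonot}, which shows that strict monotonicity of one component forces strict monotonicity of the other (again via the maximum principle applied to the equation for the derivative of the second component).
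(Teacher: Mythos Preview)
Your overall strategy (continuation in $\Lambda$ based on nondegeneracy of monotone heteroclinics, with the essential spectrum sitting at $[\Lambda^2-1,\infty)$ and the kernel spanned by $(v_1',v_2')$) is exactly the one the paper uses. The compactness and strict-monotonicity-in-the-limit steps you describe are also the same, and the paper handles them just as you outline (a priori bound (\ref{eqAlama}), uniform exponential tails, Hopf to upgrade $v_i'\ge 0$ back to $v_i'>0$).

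The genuine difference, and the place where your argument has a gap, is the \emph{base case}. You anchor the continuation at $\Lambda\to\infty$, asserting that any monotone solution at large $\Lambda$ must coincide with the constructed one ``by local uniqueness near the singular limit''. This step is not available: the fixed-point argument behind Theorem \ref{thmMain} gives uniqueness only inside an $\mathcal{O}(\Lambda^{-\infty})$-ball around the carefully built approximate solution $(\mathbf{w}_{1,ap},\mathbf{w}_{2,ap})$, whereas the general convergence of monotone solutions to the singular limit $(\chi_{(0,\infty)}U_1,\chi_{(-\infty,0)}U_2)$ gives only $o(1)$ closeness. Bridging that gap would require redoing essentially the whole inner/outer asymptotic analysis for an \emph{arbitrary} monotone solution, which is substantial additional work and is not done anywhere in the paper.

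The paper sidesteps this entirely by anchoring the continuation at $\Lambda=3$, where uniqueness is one line: for $\Lambda=3$ the sum $u=v_1+v_2$ satisfies $u''+u-u^3=0$ with $u\to 1$ at $\pm\infty$, hence $u\equiv 1$, and then $v_1,v_2$ are determined. Every monotone solution sits on a globally defined smooth branch (by nondegeneracy and the a priori estimates), and two distinct branches would have to coincide at $\Lambda=3$, contradicting local uniqueness there. This makes Proposition \ref{proUniqS} completely independent of the large-$\Lambda$ construction; in particular it does not invoke Theorems \ref{thmMain} or \ref{thmNonDeg} at all. If you replace your $\Lambda\to\infty$ anchor by the $\Lambda=3$ observation, your argument becomes correct and essentially identical to the paper's.
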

\begin{proof}
	The proof is based on the key observation that uniqueness holds for $\Lambda=3$ (see  \cite{alamaARMA15} and the references therein) and a continuation argument.

Throughout this proof, we shall assume the 'pinning' condition:
 \begin{equation}\label{eqPining}
 v_1(0)=v_2(0).
 \end{equation}

	Firstly, and for future reference, we note that any solution of (\ref{eqEqGen})-(\ref{eqBdryGen}) with $\Lambda>1$ satisfies
	\begin{equation}\label{eqAlama}
	v_1^2+v_2^2< 1,\ \ z\in \mathbb{R},
	\end{equation}
	(see \cite[Thm. 2.4]{alamaARMA15}).
 %Actually, this bound holds for any solution of (\ref{eqEq}) that is defined on the whole real line. Moreover,   equality is not attained somewhere unless the %solution has identically constant components.

	We next claim that the following localization property holds: Let $\underline{\lambda}>1$ and $\varepsilon>0$, then there exists $M>0$ such that any solution of (\ref{eqEqGen})-(\ref{eqBdryGen})-(\ref{eqPining})
	with $\Lambda \geq \underline{\lambda}$ such that
	\begin{equation}\label{eqUniqNondec}
	v_1'(z)\geq 0,\ \ v_2'(z)\leq 0,\ \ z\in \mathbb{R},
	\end{equation}
	satisfies
	\begin{equation}\label{eqUniqClaimP}
	1-v_1(z)+v_2(z)<\varepsilon \ \ \textrm{for}\ \ z\geq M,
	\end{equation}
and the analogous relation for $z\leq -M$.
	Indeed, in view of the conservation of the Hamiltonian, it is enough to verify that, given $\epsilon>0$, there exists $L>0$ so that any such solution satisfies
	\[v'_1(z_0)-v_2'(z_0)<\epsilon\ \ \textrm{for\ some}\ \ z_0\in [0,L].
	\]
	If not, for any $L>0$, there would exist at least one such solution satisfying
	\[
	v_1'(z)-v_2'(z)\geq \epsilon \ \ \textrm{for}\ z\in [0,L],
	\]
	i.e.,
	\[
	v_1(L)-v_2(L)\geq \epsilon L,
	\]
	which is clearly not possible for large $L$ by virtue of (\ref{eqAlama}) and proves the claim.

In turn, similarly to \cite[Thm. 2.8]{bronsard1996three}, for any $1<\underline{\lambda}<\bar{\lambda}$, there exist constants $c,C>0$ such that any solution of (\ref{eqEqGen})-(\ref{eqBdryGen})-(\ref{eqPining})-(\ref{eqUniqNondec}) with $\Lambda \in \left[\underline{\lambda},\bar{\lambda}\right]$ satisfies
\begin{equation}\label{eqGui}
\sum_{i=1}^{2}\left\{|v_i''|+|v_i'|+|v_i-2+i| \right\}\leq Ce^{-cz},\ \ z\geq M,
\end{equation}
and the analogous estimate for $z\leq -M$.

	The previous observations have the following interesting implication: Let $\left(v_{1,n},v_{2,n}\right)$ be a sequence of solutions of (\ref{eqEqGen})-(\ref{eqBdryGen})-(\ref{eqMonotFinal})-(\ref{eqPining}) with $\Lambda=\Lambda_n$,
	such that
	\[
	v_{i,n}-v_{i,\infty}\to 0 \ \textrm{in}\ H^2(\mathbb{R}),\ i=1,2,\ \textrm{and}\ \Lambda_n \to \Lambda_\infty \in (1,\infty).
	\]
	Then, the limit $\left(v_{1,\infty},v_{2,\infty}\right)$ satisfies (\ref{eqEqGen})-(\ref{eqBdryGen})-(\ref{eqMonotFinal})-(\ref{eqPining}) with $\Lambda=\Lambda_\infty$. Indeed, since $C^1(\mathbb{R})$ is continuously imbedded into $H^2(\mathbb{R})$,  without loss of generality, it is enough to exclude the scenario where
	\begin{equation}\label{eqUniqScenario}
	v_{1,\infty}'(z_*)=0\ \ \textrm{for\ some}\ z_*\in \mathbb{R}.
	\end{equation}
	To this end, we note that $\varphi\equiv v_{1,\infty}'\geq 0$ satisfies
	\[
	-\varphi''+P(z)\varphi=-2\Lambda_\infty  v_{1,\infty}v_{2,\infty}v_{2,\infty}'\geq 0, \ \ z\in \mathbb{R},
	\]
	for some smooth function $P$. Thus, the above scenario (\ref{eqUniqScenario}) cannot happen, as it would violate a version of Hopf's boundary point lemma (see for example \cite[Thm. 2.8.4]{pucci2007maximum}).
	
	It follows from  \cite[Thm. 3.1]{alamaARMA15} that the linearized operator of (\ref{eqEqGen})  about a solution of (\ref{eqEqGen})-(\ref{eqBdryGen})-(\ref{eqMonotFinal})-(\ref{eqPining}), that is $\textbf{M}$ in (\ref{eqM}) with $H^2(\mathbb{R})\times H^2(\mathbb{R})$ as its domain, has a one-dimensional kernel spanned by $(v_1',v_2')$. We also note that this linear operator is self-adjoint in $L^2(\mathbb{R})\times L^2(\mathbb{R})$, and its continuous spectrum is contained in $[\Lambda^2-1,\infty)$ (see again \cite{alamaARMA15}).
 Therefore, by the variational Lyapunov-Schmidt procedure of Proposition \ref{proGenLinearQ} (in a regular perturbation setting) or a dynamical systems approach (see \cite{shatah2003orbits}), and the observation in the previous paragraph, we deduce the following: Each solution $(v_{1,\Lambda_0},v_{2,\Lambda_0})$ of (\ref{eqEqGen})-(\ref{eqBdryGen})-(\ref{eqMonotFinal})-(\ref{eqPining}), for some $\Lambda_0>1$, is contained in a locally unique and smooth for $|\Lambda-\Lambda_0|$ sufficiently small (with respect to variations from $(v_{1,\Lambda_0},v_{2,\Lambda_0})$ in the $H^2(\mathbb{R})\times H^2(\mathbb{R})$-norm) branch of solutions of (\ref{eqEqGen})-(\ref{eqBdryGen})-(\ref{eqMonotFinal})-(\ref{eqPining}). In fact, if the aforementioned local uniqueness property failed,  there would be such a solution with the associated linearized operator
 having a nontrivial element $(Z_1,Z_2)$ in its kernel
 such that $Z_1(0)=Z_2(0)$, which is impossible by the opposite sign of $v_1'$ and $v_2'$. We observe next that, thanks to (\ref{eqAlama}), (\ref{eqUniqClaimP}) and (\ref{eqGui}), any solution to (\ref{eqEqGen})-(\ref{eqBdryGen})-(\ref{eqMonotFinal})-(\ref{eqPining}) with $\Lambda \in [\underline{\lambda},\bar{\lambda}]$ satisfies
	\[
	\|v_1-v_{1,\Lambda_0} \|_{H^2(\mathbb{R})}+\|v_2-v_{2,\Lambda_0} \|_{H^2(\mathbb{R})}\leq C,
	\]
	where $C$ depends only on $\underline{\lambda},\bar{\lambda}>1$. Therefore, the aforementioned solution branch of (\ref{eqEqGen})-(\ref{eqBdryGen})-(\ref{eqMonotFinal})-(\ref{eqPining}) can be extended smoothly and uniquely for all $\Lambda>1$.

As was mentioned in the beginning of the proof, it has been  observed  that for $\Lambda=3$ there exists a unique solution of (\ref{eqEqGen})-(\ref{eqBdryGen})-(\ref{eqMonotFinal})-(\ref{eqPining}); in fact, this solution can be found explicitly. Indeed, letting $u\equiv v_1+v_2$ yields that
	\[
	u''+u-u^3=0,\ \ z\in \mathbb{R};\ \ u\to 1,\ \ z\to \pm \infty,
	\]
	that is $u\equiv 1$ and the aforementioned uniqueness follows at once. Hence, in the case where there was non-uniqueness of solutions to (\ref{eqEqGen})-(\ref{eqBdryGen})-(\ref{eqMonotFinal})-(\ref{eqPining}) for some $\Lambda>1$, we would have two of the previously described solution branches meeting at some $\Lambda_*>1$. However, this is not possible from the local uniqueness of the solution branches.
%as it would imply that the corresponding linearized operator at $\Lambda_*$ would have a nontrivial kernel.
	\end{proof}

\begin{rem}\label{remExHet}
We note that, starting the above continuation argument from $\Lambda=3$,   yields a non-variational  proof of   existence of the heteroclinic solution.
\end{rem}

Concerning the monotonicity condition (\ref{eqMonotFinal}), we have the following interesting  property which is motivated from \cite{farinaDCDS}, where the PDE version of system (\ref{eqBUsystem}) was considered and the concept of \emph{half-monotone} solutions was introduced.
\begin{lem}\label{lemHalfMonot}
Assume that $(v_1,v_2)$ is a solution to (\ref{eqEqGen})-(\ref{eqBdryGen}) with positive components. Then,
$v_1'>0$ implies that $v_2'<0$, and vice versa.
\end{lem}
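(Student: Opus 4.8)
The plan is to prove Lemma \ref{lemHalfMonot} by a sliding/maximum-principle argument, using the Hamiltonian identity (\ref{eqHamilton}) as the crucial extra ingredient. Suppose $(v_1,v_2)$ solves (\ref{eqEqGen})-(\ref{eqBdryGen}) with positive components and, say, $v_1'>0$ on $\mathbb{R}$; we want $v_2'<0$. First I would differentiate the first equation of (\ref{eqEqGen}) and observe that $\psi\equiv v_1'>0$ satisfies
\[
-\psi''+\left(3v_1^2-1+\Lambda v_2^2 \right)\psi=-2\Lambda v_1v_2 v_2',\ \ z\in \mathbb{R}.
\]
From the boundary behaviour (\ref{eqBdryGen}), $v_1\to 0$ and $v_2\to 1$ as $z\to -\infty$, so $3v_1^2-1+\Lambda v_2^2\to \Lambda-1>0$ there, and similarly it tends to $2$ as $z\to +\infty$; hence this zeroth-order coefficient is positive outside a compact set. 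Since $\psi>0$ and vanishes at $\pm\infty$ (by (\ref{eqBdryGen}) and elliptic estimates, as $v_i'\to 0$), evaluating the equation at an interior maximum point of $\psi$ — which exists — forces $-2\Lambda v_1v_2 v_2'\ge 0$ there, i.e.\ $v_2'\le 0$ at that point. This already gives non-positivity of $v_2'$ somewhere, but to propagate it everywhere I would instead work directly with $\varphi\equiv v_2'$.

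The main step is to show $\varphi=v_2'\le 0$ on all of $\mathbb{R}$, and then upgrade to strict inequality by Hopf. Differentiating the second equation, $\varphi$ satisfies
\[
-\varphi''+\left(3v_2^2-1+\Lambda v_1^2 \right)\varphi=-2\Lambda v_1v_2 v_1'\le 0,\ \ z\in \mathbb{R},
\]
using $v_1,v_2>0$ and $v_1'>0$. The difficulty is that the zeroth-order coefficient $q(z)\equiv 3v_2^2-1+\Lambda v_1^2$ need not be positive everywhere (near $-\infty$ it is $\approx 2$, near $+\infty$ it is $\approx \Lambda-1>0$, but in the transition region it could dip below zero), so the maximum principle is not immediately applicable. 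Here is where I would bring in the Hamiltonian: from (\ref{eqGenHam})-(\ref{eqHamilton}),
\[
\tfrac12(v_1')^2+\tfrac12(v_2')^2=\tfrac{(1-v_1^2)^2}{4}+\tfrac{(1-v_2^2)^2}{4}+\tfrac{\Lambda}{2}v_1^2v_2^2-\tfrac14,
\]
and combining with $v_1^2+v_2^2<1$ (which holds by (\ref{eqAlama}) when $\Lambda>1$; if one only assumes $\Lambda>0$, by the general $L^\infty$ bound mentioned after (\ref{eqHamilton}) a similar control is available) one gets a pointwise lower bound on $(v_1')^2+(v_2')^2$, hence a sign-definite control connecting $v_1'$ and $v_2'$. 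More precisely, suppose for contradiction that $v_2'(z_0)>0$ at some $z_0$; since $v_2\to 1$ at $-\infty$ and $v_2\to 0$ at $+\infty$, $v_2$ must have an interior local maximum and an interior local minimum, so there exist points $z_1<z_2$ with $v_2'(z_1)=v_2'(z_2)=0$ and $v_2''(z_1)\le 0\le v_2''(z_2)$; plugging into the second equation of (\ref{eqEqGen}) gives $v_2(z_1)\big(v_2^2(z_1)-1+\Lambda v_1^2(z_1)\big)\le 0$ and the reverse at $z_2$. I would exploit this together with the monotonicity $v_1'>0$ (so $v_1$ is increasing across $[z_1,z_2]$) to derive a contradiction with the ordering of the values $v_2(z_1),v_2(z_2)$.

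Alternatively — and this is the cleaner route I would actually push — I would use the substitution $u\equiv v_1+v_2$, which appears already in the proof of Proposition \ref{proUniqS}. However $u''+u-u^3=2\Lambda v_1^2v_2^2 u$ is not quite as clean off the line $\Lambda=3$; instead consider $w\equiv v_1-v_2$, which by (\ref{eqEqGen}) satisfies
\[
-w''+\big(v_1^2+v_1v_2+v_2^2-1-\Lambda v_1v_2\big)w=0,\ \ z\in\mathbb{R},
\]
with $w\to -1$ at $-\infty$ and $w\to 1$ at $+\infty$; since $\Lambda>1$, the coefficient $v_1^2+v_1v_2+v_2^2-1-\Lambda v_1 v_2\le v_1^2+v_2^2-1+(1-\Lambda)v_1v_2<0$ by (\ref{eqAlama}), so $w$ obeys a maximum principle and $w'=v_1'-v_2'>0$ throughout — but this uses $\Lambda>1$, which the lemma does not assume. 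So for the stated lemma (positive components, $\Lambda>0$) I would stay with the Hopf-lemma argument on $\varphi=v_2'$: once $v_2'\le 0$ is established, the equation $-\varphi''+q(z)\varphi=-2\Lambda v_1v_2v_1'\le 0$ with $\varphi\le 0$ shows that if $\varphi(z_*)=0$ for some interior $z_*$ then $z_*$ is an interior maximum of $\varphi$, and (after subtracting a large constant to make the coefficient irrelevant locally, as in the proof of Theorem \ref{thmMain} in Subsubsection \ref{subsubMonot}) the strong maximum principle / Hopf lemma forces $\varphi\equiv 0$, contradicting $v_2\to 1,0$ at $\mp\infty$. The symmetric statement ($v_2'<0\Rightarrow v_1'>0$) follows by exchanging the roles of $v_1$ and $v_2$ and reflecting $z\mapsto -z$. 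The main obstacle, as flagged, is ruling out sign changes of $v_2'$ when the zeroth-order coefficient $q$ is not everywhere positive; the Hamiltonian constraint (\ref{eqHamilton}) together with the a priori bound $v_1^2+v_2^2<1$ is what I expect to carry that step.
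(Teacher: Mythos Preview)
Your proposal identifies the right difficulty but does not close the argument. The equation for $\varphi=v_2'$,
\[
-\varphi''+\left(3v_2^2-1+\Lambda v_1^2\right)\varphi=-2\Lambda v_1v_2v_1'\le 0,
\]
has a zeroth-order coefficient that can be negative, so the maximum principle does not apply directly, and none of your workarounds resolve this. The Hamiltonian/critical-point sketch is left incomplete, and the $w=v_1-v_2$ route is flawed on two counts: with a \emph{negative} coefficient the equation $-w''+qw=0$ does not enjoy a comparison principle (it is oscillatory), and in any case $w'>0$ would only give $v_2'<v_1'$, not $v_2'<0$. Your Hopf step upgrading $v_2'\le 0$ to $v_2'<0$ is fine (at a zero of $\varphi$ one gets $\varphi''>0$ directly from the equation, contradicting that it is an interior maximum), but the hard part, getting $v_2'\le 0$ everywhere, is missing.

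The paper's idea, following \cite{farinaDCDS}, is to pass to the logarithmic derivative $\sigma=v_2'/v_2$, which is well defined since $v_2>0$. Differentiating $\sigma'+\sigma^2=v_2^2-1+\Lambda v_1^2$ and using $v_1'>0$ gives
\[
(v_2^2\sigma')'=2v_2^4\sigma+2\Lambda v_2^2 v_1 v_1'>2v_2^4\sigma,
\]
an inequality with \emph{positive} zeroth-order coefficient $2v_2^4$. Since $v_2\to 1$ with $v_2<1$ at $-\infty$ (by $v_1^2+v_2^2<1$) and $v_2\to 0^+$ at $+\infty$, one finds sequences $z_n^\pm\to\pm\infty$ with $\sigma(z_n^\pm)<0$; the maximum principle on $[z_n^-,z_n^+]$ then forces $\sigma<0$ there, hence everywhere. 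This substitution is precisely what eliminates the sign obstruction you flagged.
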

\begin{proof}
Let us assume that \begin{equation}\label{eqhalfv1}
v_1'>0,\end{equation} (the other case can be treated completely analogously). We  note that,
in light of (\ref{eqAlama}), there exist sequences $\{z_n^\pm\}$ with $z_n^\pm\to \pm \infty$ such that \begin{equation}\label{eqNM} v_2'(z_n^\pm)<0\ \  \textrm{for}\ \   n\gg 1.\end{equation}
Then, in analogy to  \cite{farinaDCDS}, we let
\[
\sigma=\frac{v_2'}{v_2}.
\]
A simple computation, using (\ref{eqEqGen}) and (\ref{eqhalfv1}), gives that
\[
(v_2^2\sigma')'>2v_2^4 \sigma,\ \ z\in \mathbb{R}.
\]
Hence, in view of (\ref{eqNM}), we deduce by the maximum principle that $\sigma<0$ on $[z_n^-,z_n^+]$ for $n \gg 1$, and the lemma follows.
\end{proof}

\section{Asymptotic behaviour of the energy: Proof of Corollary \ref{corenergyExp}}\label{secEnergetic}
\begin{proof}[Proof of Corollary \ref{corenergyExp}]
In view of (\ref{eqmonotTHM}), Theorem \ref{thmUniqNonSym} and the discussion in the beginning of Section \ref{secUniq}, we infer that
the solution in Theorem \ref{thmMain} is the only minimizer (modulo translations) with positive components   to the  problem (\ref{eqsigmaL}). (Actually, a simple reflection argument, using $\left(|v_1|,|v_2| \right)$ as a competitor in the energy, yields that minimizers necessarily have positive components). Thus, in order to verify the assertion of Corollary \ref{corenergyExp}, it is enough to estimate the energy  of the aforementioned solution. For this purpose, a very helpful  observation is that, thanks to the conservation of the hamiltonian, we have
\[
E_\Lambda(v_1,v_2)=\int_{\mathbb{R}}^{}\left[(v_1')^2+(v_2')^2 \right]dz.
\]
 %\end{proof}
Firstly, making use of (\ref{v1UGrad}), we find that
\[
\int_{(\ln \Lambda)\Lambda^{-\frac{1}{4}}}^{\infty}(v_1')^2dz=\int_{(\ln \Lambda)\Lambda^{-\frac{1}{4}}+\psi_0^{-1}\kappa \Lambda^{-\frac{1}{4}}}^{\infty}
\left[U_1'(s)\right]^2ds+\mathcal{O}\left( (\ln \Lambda)\Lambda^{-\frac{3}{4}}\right)
\]
as $\Lambda \to \infty$. In turn, exploiting the fact that $U_1'(s)=\psi_0+\mathcal{O}(s^2)$ for $0\leq s \leq 1$, we can write
\[
\int_{(\ln \Lambda)\Lambda^{-\frac{1}{4}}}^{\infty}(v_1')^2dz=\int_{0}^{\infty}
\left[U_1'(s)\right]^2ds-\psi_0^2(\ln \Lambda)\Lambda^{-\frac{1}{4}}-\psi_0 \kappa \Lambda^{-\frac{1}{4}}+\mathcal{O}\left( (\ln \Lambda)^3\Lambda^{-\frac{3}{4}}\right)
\]
as $\Lambda \to \infty$.
On the other side, we obtain from (\ref{v1outt}) and (\ref{v1expdecay}) respectively  that
\[
\int_{-(\ln \Lambda)\Lambda^{-\frac{1}{4}}}^{(\ln \Lambda)\Lambda^{-\frac{1}{4}}}(v_1')^2dz=\Lambda^{-\frac{1}{4}}\int_{-(\ln \Lambda)}^{(\ln \Lambda)}(V'_1)^2dx+\mathcal{O}\left( (\ln \Lambda)^3\Lambda^{-\frac{3}{4}}\right)
\]
and
\[
\int_{-\infty}^{-(\ln \Lambda)\Lambda^{-\frac{1}{4}}}(v_1')^2dz=\mathcal{O}\left(\Lambda^{-\infty}\right)\ \ \textrm{as}\ \ \Lambda \to \infty.
\]
By adding the above three relations, we arrive at
\[\begin{array}{rcl}
    \int_{-\infty}^{\infty}(v_1')^2dz & = & \int_{0}^{\infty}
\left[U_1'(s)\right]^2ds+\Lambda^{-\frac{1}{4}}\left\{\int_{-(\ln \Lambda)}^{0}(V'_1)^2dx+\int_{0}^{(\ln \Lambda)}\left[(V'_1)^2-\psi_0^2\right]dx-\psi_0\kappa\right\} \\
      &   & +\mathcal{O}\left( (\ln \Lambda)^3\Lambda^{-\frac{3}{4}}\right).
  \end{array}
\]
Obviously, the righthand side increases negligibly if we replace the ends of integration $\pm \ln \Lambda$ with $\pm \infty$ respectively (keep in mind that $V_1$ is convex and that relation (\ref{eqV1V2asympt}) can be differentiated). The completely analogous relation holds for the second component. Therefore, observing that
\[
\int_{-\infty}^{0}(V'_1)^2dx+\int_{0}^{\infty}\left[(V'_1)^2-\psi_0^2\right]dx=\int_{-\infty}^{\infty}V'_1
\left(V'_1-\psi_0\right)dx,
\]
 it remains to verify that
\begin{equation}\label{eqpartition}\int_{-\infty}^{0}
\left[U_2'(s)\right]^2ds+
\int_{0}^{\infty}
\left[U_1'(s)\right]^2ds=\frac{2\sqrt{2}}{3},
\end{equation}
(recall also the symmetry property (\ref{eqV1V2sym})). This is indeed the case, as one can determine explicitly the value of each one of the above integrals (in fact, they are equal by  symmetry), thanks to the conservation of the hamiltonian of  problems (\ref{eqW1Gen}), (\ref{eqW2Gen}) (see for example \cite[Lem. 4.1]{GL}); we leave the details to the reader.
\end{proof}

\section*{Acknowledgments} This project has received funding from the CNRS, with an invited professor position for the second author. The second author wishes to thank the University of Versailles, where the first part of this paper was written, for the hospitality. He acknowledges  support from the ARISTEIA project DIKICOMA from Greece and from the European Union's Seventh Framework programme for  research and innovation under the Marie Sk\l{}odowska-Curie grant agreement No 609402-2020 researchers: Train to Move (T2M).  Moreover, he wishes to express his thanks to Prof. Terracini for   interesting and motivating discussions.
\bibliographystyle{plain}
\bibliography{biblioaacs}

\begin{thebibliography}{10}

\bibitem{ANS}
Amandine Aftalion, Benedetta Noris, and Christos Sourdis.
\newblock Thomas-{F}ermi approximation for coexisting two component
  {B}ose--{E}instein condensates and nonexistence of vortices for small
  rotation.
\newblock {\em Communications in Mathematical Physics}, 336(2):509--579, 2015.

\bibitem{AL}
Amandine Aftalion and Jimena Royo-Letelier.
\newblock A minimal interface problem arising from a two component
  {B}ose--{E}instein condensate via ${\Gamma}$-convergence.
\newblock {\em Calculus of Variations and Partial Differential Equations},
  52(1-2):165--197, 2015.

\bibitem{agudelo}
Oscar Agudelo and Andr{\'e}s Z{\'u}{\~n}iga.
\newblock A two-end family of solutions for the inhomogeneous {A}llen--{C}ahn
  equation in $\mathbb{R}^2$.
\newblock {\em Journal of Differential Equations}, 256(1):157--205, 2014.

\bibitem{alamaARMA15}
Stan Alama, Lia Bronsard, Andres Contreras, and Dmitry~E. Pelinovsky.
\newblock Domain walls in the coupled {G}ross--{P}itaevskii equations.
\newblock {\em Archive for Rational Mechanics and Analysis}, 215(2):579--610,
  2015.

\bibitem{alikakosFuscoIndiana}
Nicholas~D Alikakos and Giorgio Fusco.
\newblock On the connection problem for potentials with several global minima.
\newblock {\em Indiana Univ. Math. J.}, 57(04):1871--1906, 2008.

\bibitem{AO}
P.~Ao and S.T. Chui.
\newblock Binary {B}ose-{E}instein condensate mixtures in weakly and strongly
  segregated phases.
\newblock {\em Physical Review A}, 58(6):4836, 1998.

\bibitem{barankov}
RA~Barankov.
\newblock Boundary of two mixed {B}ose-{E}instein condensates.
\newblock {\em Physical Review A}, 66(1):013612, 2002.

\bibitem{berestycki-wei2012}
Henri Berestycki, Tai-Chia Lin, Juncheng Wei, and Chunyi Zhao.
\newblock On phase-separation models: asymptotics and qualitative properties.
\newblock {\em Archive for Rational Mechanics and Analysis}, 208(1):163--200,
  2013.

\bibitem{berestycki2}
Henri Berestycki, Susanna Terracini, Kelei Wang, and Juncheng Wei.
\newblock On entire solutions of an elliptic system modeling phase separations.
\newblock {\em Advances in Mathematics}, 243:102--126, 2013.

\bibitem{bronsard1996three}
Lia Bronsard, Changfeng Gui, and Michelle Schatzman.
\newblock A three-layered minimizer in $\mathbb{R}^2$ for a variational problem
  with a symmetric three-well potential.
\newblock {\em Communications on pure and applied mathematics}, 49(7):677--715,
  1996.

\bibitem{CaffLin2}
L.~A. Caffarelli and F.-H. Lin.
\newblock Singularly perturbed elliptic systems and multi-valued harmonic
  functions with free boundaries.
\newblock {\em J. Amer. Math. Soc.}, 21(3):847--862, 2008.

\bibitem{catelani}
Gianluigi Catelani and EA~Yuzbashyan.
\newblock Phase diagram, extended domain walls, and soft collective modes in a
  three-component fermionic superfluid.
\newblock {\em Physical Review A}, 78(3):033615, 2008.

\bibitem{ctv3}
M.~Conti, S.~Terracini, and G.~Verzini.
\newblock On a class of optimal partition problem related to the
  {F}u$\check{\text{c}}$\'ik spectrum and to the monotonicity formulae.
\newblock {\em Calc. Var. Partial Differential Equations}, 22(1):45--72, 2005.

\bibitem{dancer2011}
EN~Dancer, Kelei Wang, and Zhitao Zhang.
\newblock Uniform {H}{\"o}lder estimate for singularly perturbed parabolic
  systems of {B}ose--{E}instein condensates and competing species.
\newblock {\em Journal of Differential Equations}, 251(10):2737--2769, 2011.

\bibitem{del2010toda}
Manuel Del~Pino, Micha{\l} Kowalczyk, Frank Pacard, and Juncheng Wei.
\newblock The {T}oda system and multiple-end solutions of autonomous planar
  elliptic problems.
\newblock {\em Advances in Mathematics}, 224(4):1462--1516, 2010.

\bibitem{del2008giorgi}
Manuel del Pino and Juncheng Wei.
\newblock An introduction to the finite and infinite dimensional reduction
  method.
\newblock In Xingwang~Xu Fei~Han and Weiping Zhang, editors, {\em Geometric
  analysis around scalar curvatures}, pages 35--118. World Scientific,
  Singapore, 2016.

\bibitem{farinaDCDS}
Alberto Farina.
\newblock Some symmetry results for entire solutions of an elliptic system
  arising in phase separation.
\newblock {\em Discrete and Continuous Dynamical Systems}, 34(6):2505--2511,
  2014.

\bibitem{gallo}
Cl\'ement Gallo.
\newblock The ground state of two coupled {G}ross-{P}itaevskii equations in the
  {T}homas-{F}ermi limit.
\newblock {\em Journal de {M}ath\'{e}matiques {P}ures et {A}ppliqu\'{e}es},
  doi:10.1016/j.matpur.2016.02.001, 2016.

\bibitem{goldman2015phase}
Michael Goldman and Beno{\i}t Merlet.
\newblock Phase segregation for binary mixtures of {B}ose-{E}instein
  condensates.
\newblock {\em arXiv preprint arXiv:1505.07234}, 2015.

\bibitem{GL}
Michael Goldman and Jimena Royo-Letelier.
\newblock Sharp interface limit for two components {B}ose-{E}instein
  condensates.
\newblock {\em ESAIM: COCV}, 21(3):603--624, 2015.

\bibitem{grossi2008radial}
Massimo Grossi.
\newblock Radial solutions for the {B}rezis--{N}irenberg problem involving
  large nonlinearities.
\newblock {\em Journal of Functional Analysis}, 254(12):2995--3036, 2008.

\bibitem{herve1994etude}
Rose-Marie Herv{\'e} and Michel Herv{\'e}.
\newblock {\'E}tude qualitative des solutions r{\'e}elles d'une {\'e}quation
  diff{\'e}rentielle li{\'e}e {\`a} l'{\'e}quation de {G}inzburg-{L}andau.
\newblock In {\em Annales de l'IHP Analyse non lin{\'e}aire}, volume~11, pages
  427--440, 1994.

\bibitem{hislop}
Peter~D. Hislop and Israel~Michael Sigal.
\newblock {\em Introduction to spectral theory: With applications to
  {S}chr{\"o}dinger operators}.
\newblock Springer New York, 1996.

\bibitem{KT}
Kenichi Kasamatsu and Makoto Tsubota.
\newblock Vortex sheet in rotating two-component {B}ose-{E}instein condensates.
\newblock {\em Physical Review A}, 79(2):023606, 2009.

\bibitem{KTU}
Kenichi Kasamatsu, Makoto Tsubota, and Masahito Ueda.
\newblock Spin textures in rotating two-component {B}ose-{E}instein
  condensates.
\newblock {\em Physical Review A}, 71(4):043611, 2005.

\bibitem{kuhen}
Christian Kuehn.
\newblock {\em {M}ultiple {T}ime {S}cale {D}ynamics}, volume 191 of {\em
  {A}pplied {M}athematical {S}ciences}.
\newblock Springer, 2015.

\bibitem{lund2015existence}
Ross~G Lund, JM~Robbins, and Valeriy Slastikov.
\newblock Existence of travelling-wave solutions representing domain wall
  motion in a thin ferromagnetic nanowire.
\newblock {\em arXiv preprint arXiv:1512.06016}, 2015.

\bibitem{AM}
Peter Mason and Amandine Aftalion.
\newblock Classification of the ground states and topological defects in a
  rotating two-component {B}ose-{E}instein condensate.
\newblock {\em Physical Review A}, 84(3):033611, 2011.

\bibitem{mishonov}
Todor~M. Mishonov.
\newblock Comment on ''{I}nterface tension of {B}ose-{E}instein condensates" by
  {B}ert {V}an {S}chaeybroeck, {P}hys. {R}ev. {A} 78, 023624-9 (2008).
\newblock {\em arXiv preprint arXiv:1502.07171}, 2015.

\bibitem{NoTaTeVe}
B.~Noris, H.~Tavares, S.~Terracini, and G.~Verzini.
\newblock Uniform {H}{\"o}lder bounds for nonlinear {S}chr\"odinger systems
  with strong competition.
\newblock {\em Comm. Pure Appl. Math.}, 63(3):267--302, 2010.

\bibitem{pucci2007maximum}
Patrizia Pucci and James Serrin.
\newblock Maximum principles for elliptic partial differential equations.
\newblock {\em Handbook of Differential Equations--Stationary Partial
  Differential Equations}, 4:355--483, 2007.

\bibitem{schecter2010heteroclinic}
Stephen Schecter and Christos Sourdis.
\newblock Heteroclinic orbits in slow--fast hamiltonian systems with slow
  manifold bifurcations.
\newblock {\em Journal of Dynamics and Differential Equations}, 22(4):629--655,
  2010.

\bibitem{shatah2003orbits}
Jalal Shatah and Chongchun Zeng.
\newblock Orbits homoclinic to centre manifolds of conservative {PDE}s.
\newblock {\em Nonlinearity}, 16(2):591--614, 2003.

\bibitem{sz2}
Nicola Soave and Alessandro Zilio.
\newblock On phase separation in systems of coupled elliptic equations:
  asymptotic analysis and geometric aspects.
\newblock {\em arXiv preprint arXiv:1506.07779}, 2015.

\bibitem{sz}
Nicola Soave and Alessandro Zilio.
\newblock Uniform bounds for strongly competing systems: The optimal
  {L}ipschitz case.
\newblock {\em Archive for Rational Mechanics and Analysis}, pages 1--51, 2015.

\bibitem{son}
D.T. Son and M.A. Stephanov.
\newblock Domain walls of relative phase in two-component {B}ose-{E}instein
  condensates.
\newblock {\em Physical Review A}, 65(6):063621, 2002.

\bibitem{tavares2012regularity}
Hugo Tavares and Susanna Terracini.
\newblock Regularity of the nodal set of segregated critical configurations
  under a weak reflection law.
\newblock {\em Calculus of Variations and Partial Differential Equations},
  45(3-4):273--317, 2012.

\bibitem{vaninterface}
Bert Van~Schaeybroeck.
\newblock Interface tension of {B}ose-{E}instein condensates.
\newblock {\em Physical Review A}, 78(2):023624, 2008.

\bibitem{wang2015uniform}
Kelei Wang.
\newblock Uniform {L}ipschitz regularity of flat segregated interfaces in a
  singularly perturbed problem.
\newblock {\em arXiv preprint arXiv:1507.06104}, 2015.

\bibitem{WeWe1}
Juncheng Wei and Tobias Weth.
\newblock Asymptotic behaviour of solutions of planar elliptic systems with
  strong competition.
\newblock {\em Nonlinearity}, 21(2):305--317, 2008.

\bibitem{zhang2015singularities}
Shan Zhang and Zuhan Liu.
\newblock Singularities of the nodal set of segregated configurations.
\newblock {\em Calculus of Variations and Partial Differential Equations},
  54(2):2017--2037, 2015.

\bibitem{sternbergHeteroclinic2016}
Andr{\'e}s Z{\'u}{\~n}iga and Peter Sternberg.
\newblock On the heteroclinic connection problem for multi-well gradient
  systems.
\newblock {\em arXiv preprint arXiv:1604.03645}, 2016.

\end{thebibliography}
\end{document}